\documentclass[11pt]{article}
\usepackage{amsmath,amssymb,amsthm,verbatim}
\usepackage{color,graphics,srcltx}
\usepackage{hyperref}
\newtheorem{theorem}{Theorem}[section]

\newtheorem{proposition}[theorem]{Proposition}
\newtheorem{corollary}[theorem]{Corollary}
\newtheorem{lemma}[theorem]{Lemma}

\newtheorem{remark}[theorem]{Remark}
\newtheorem{conjecture}[theorem]{Conjecture}
\newtheorem{problem}[theorem]{Problem}
\numberwithin{equation}{section}

\def\rddots{\cdot^{\displaystyle\cdot^{\displaystyle\cdot}}}

\def\IZ{{\mathbb Z}}
\def\IC{{\mathbb C}}
\def\IR{{\mathbb R}}

\def\cE{{\mathcal E}}

\def\la{\langle}
\def\ra{\rangle}
\def\conv{{\rm conv}}

\def\diag{{\rm diag}}
\def\re{{\rm Re\,}}
\def\im{{\rm Im\,}}
\def\rank{{\rm rank\,}}

\def\cl{{\bf cl}}
\begin{document}
\openup .88\jot
\title{Numerical radius of certain two-by-two block matrices}
\author{Hwa-Long Gau, Jia-Huo Hong, Chi-Kwong Li, Kuo-Zhong Wang}
\date{}
\maketitle

\centerline{\bf Dedicated to Professor Pei Yuan Wu.}

\begin{abstract}
We investigate the numerical range $W(T)$ and numerical radius $w(T)$ of operators of the form
$T = \begin{pmatrix} A & B \\ 0 & 0 \end{pmatrix}$. We show that $W(T)$ is the union of the
numerical ranges of a family of $2\times 2$ matrices, $T_x$, leading to several consequences,
including improved inequalities for $w(T)$. For cases where $A$ is a self-adjoint involution,
we characterize the conditions under which $W(T)$ is an elliptical disk and determine the minimum
numerical radius of $T_U = \begin{pmatrix} U^*AU & B \\ 0 & 0 \end{pmatrix}$ over all unitary operators
$U$. Finally, we study matrices $T \in M_n$ satisfying $\|T^m x\| = \|T^m\| = \|T\|$ for a unit vector
$x$ and all positive integers $m$. This analysis connects these matrices to the aforementioned block form and
provides a counterexample to the conjecture that if $\|T^k\| = \|T\|$ for all $k \ge 1$, then some power
of the matrix has a direct summand that is a scalar multiple of an idempotent.
\end{abstract}

\noindent \textbf{2020 Mathematics Subject Classification.} Primary 15A60; Secondary 47A12, 15A42.

\noindent \textbf{Keywords.} Numerical range, numerical radius, block matrix, matrix norm, idempotent.

\section{Introduction}

Let $B(H)$ be the algebra of bounded linear operators
acting on the Hilbert space $H$ equipped with the inner product $\la \cdot, \cdot \ra$.
If $H$ has dimension $n$, we identify $B(H)$ as $M_n$,
the algebra of all $n \times n$ complex matrices, and
$H = \IC^n$ equipped with the inner product $\la x, y\ra = y^*x$,
where $y^*$ denotes the conjugate transpose of $y \in \IC^n$.
Let $T \in M_n$. Its operator norm is defined by
\[
\|T\| = \sup \{ \|Tx\| : x \in H,\ \|x\| = 1 \},
\]
where $\|x\|^2 = \langle x, x \rangle$. The numerical range and numerical radius of $T$ are defined
by

\medskip
\centerline{
$W(T) = \{ \langle Tx,x \rangle: x \in H, \|x\| = 1\}$ \quad
and  \quad
$w(T) = \sup \{ |\mu|: \mu \in W(T)\}.$}

\medskip\noindent
The numerical range and numerical radius are useful tools for  studying
operators and matrices; e.g., see \cite{HJ,WG}.
In this paper, we study the numerical range, numerical radius, and some related results
for operator of the form
\begin{equation}\label{T-form}
T = \begin{pmatrix} A & B \\ 0 & 0 \end{pmatrix}.
\end{equation}
Operators of the form \eqref{T-form} appear frequently in both pure and applied areas.
For instance, if $T\in B(H)$ is idempotent, i.e., $T^2 = T$, then $T$ has an operator matrix of
the form (\ref{T-form}) with respect to an orthonormal basis using an orthonormal basis
of the range space of $T$ and its orthogonal complement.  More generally, if $\tilde{T}\in B(H)$
is a quadratic operators, i.e., $\tilde{T}^2 + b\tilde{T} + cI = 0$ for some $b, c \in \IC$,
then $\tilde{T} - \gamma I$ is unitarily similar to an operator $T$ of the form \eqref{T-form}.
By the result in \cite{TW}, $W(\tilde T) - \gamma = W(T)$
is an open or closed elliptical disk with foci at $0$ and $(b^2-4c)^{1/2}$, and with minor axis
of length $\|B\|$. Moreover, $W(T)$ is closed if and only if there
exists a unit vector $x$ such that $\|Bx\| = \|B\|$. There has also been interest in studying
the bounds for $w(T)$ for $T$ in the form \eqref{T-form}; e.g., see \cite{S} and it references.
In section 2, we will show that if $T$ has the form
\eqref{T-form}, then $W(T)$ can be written as the convex hull of elliptical disks; we also
obtain upper bound of $w(T)$ improving some results in the literature. In Section 3, we focus
on $T$ in the form \eqref{T-form} with $A, B \in M_n$ such that $A$ is a Hermitian involution,
i.e., $A = A^*$ and $A^2 = I$.

Another related topic is the
study of matrices $T \in M_n$ satisfying $\|T^m x\| = \|T^m\|=\|T\|$ for some unit vector $x$
and all positive integers $m$ that connects to
matrices of the form \eqref{T-form} with $A \in M_k$ unitary.
In this case, we may always assume that $B \in M_k$. In a sense, it is not surprising as
$T^m = \begin{pmatrix} A^m & A^{m-1}B\cr 0 & 0 \cr\end{pmatrix}$ for $m = 2, 3, \dots$.
As we shall see in Section 4, the study leads to the class of matrices
$\tilde T$ satisfying $\tilde T^*\tilde T = (\tilde T^*)^2 \tilde T^2$.
Evidently, if $\tilde T$ is invertible, then $\tilde T$ satisfies $\tilde T^* \tilde T = I$
so that $\tilde T$ is unitary. If $\tilde T$ is singular, then $\tilde T$ is unitarily similar
to a matrix of form \eqref{T-form}.
In Section 4, we will disprove the following conjecture in \cite{GWW}; see Proposition~4.10.

\begin{conjecture}\label{c11}
If $A \in M_n$ satisfies $\|A^k\| = \|A\|$ for all $k \ge 1$, then some power of $A$ has a direct summand that is a scalar multiple of an idempotent with unimodular scalar.
\end{conjecture}


\medskip

We conclude this section with some notations and basic facts about $T$ in the form
\eqref{T-form}.

For $A \in M_n$, let $A^t$ and $A^*$ denote the transpose and conjugate transpose of $A$, respectively.
The singular values of $A$ are the nonnegative square roots of the eigenvalues of $A^*A$. It is known
that $\|A\|$ is the largest singular value of $A$. If $\mathcal{F}\subseteq\{1, \dots, n\}$, we use
$A(\mathcal{F})$ to denote the principal submatrix obtained by deleting the $j$th row and $j$th column
of $A$ for all $j\not\in \mathcal{F}$. We write $\diag(a_1,\ldots,a_n)$ for the diagonal matrix with
diagonal entries $a_1,\ldots,a_n$, and denote by $0_n$ and $I_n$ the zero and identity matrices in
$M_n$, respectively.  We use $0_{k\times m}$ to denote the $k\times m$
zero matrix. For an operator $T\in B(H)$, $T$ is {\em unitarily irreducible} if it is not
unitarily similar to an operator of the form $B\oplus C$; otherwise, it is {\em unitarily reducible}.
We use $\re T$ and $\im T$ to denote the real part $(T+T^*)/2$ and imaginary part $(T-T^*)/(2i)$ of $T$,
respectively. $T$ is {\em positive semidefinite}, denoted by $T\ge 0$, if $\langle Tx,x\rangle\ge 0$ for all $x\in H$. If
$T_1$, $T_2\in B(H)$ are self-adjoint, then $T_1\ge T_2$ means that $T_1-T_2\ge 0$. For any subset $\bigtriangleup$ of complex plane $\IC$, $\cl(\bigtriangleup)$ denotes the closure of $\bigtriangleup$.

In our discussion, it is convenient to embed $H$ into $H\oplus H$ and assume that
$A, B \in B(H)$. We may further replace $T$ by $U^*TU = \begin{pmatrix} U_1^*AU_1 & U_1^*BU_2\cr
0 & 0 \cr\end{pmatrix}$ such that $U_1^*BU_2 \in B(H)$ is positive semidefinite.
In the finite dimensional case, we may assume that $U_1^*BU_2 = \diag(b_1, \dots, b_n)$
with $b_1 \ge \cdots \ge b_n \ge 0$.

\section{Generating ellipses and consequences}

It is known that for any operator $T \in B(H)$, $W(T)$ can be written as the union
of $W(X^*TX)$ for $X: \IC^2 \rightarrow H$ with $X^*X = I_2$.
We will show that if $T$ has the form \eqref{T-form},
then $T$ is the union of $W(X^*TX)$ some special $X$ so that $X^*TX$ has a simple form.
We begin with the following easy case.

\begin{proposition}  Let $T$ have the form \eqref{T-form}. Suppose $A, B \in M_k$ and there are unitary $U, V$ such that
$U^*AU = \diag(a_1, \dots, a_k)$ and $U^*BV = \diag(b_1, \dots, b_k)$.
Let $T_j = \begin{pmatrix} a_j & b_j \cr 0 & 0 \cr\end{pmatrix}$ for $j = 1, \dots, k$.
Then
$$W(T) = \conv \Big(\bigcup_{j=1}^k W(T_j)\Big) \quad \mbox{and} \quad
w(T) = \max\{ w(T_j): 1 \le  j \le k\}.$$
\end{proposition}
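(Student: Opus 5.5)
The plan is to reduce $T$, by a single unitary similarity, to a direct sum of the $2\times 2$ blocks $T_j$, and then apply the standard fact that the numerical range of a direct sum is the convex hull of the numerical ranges of the summands.

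Set $W = U \oplus V$, which is unitary on $\IC^k\oplus\IC^k$. Then
$$W^*TW = \begin{pmatrix} U^*AU & U^*BV \\ 0 & 0 \end{pmatrix}
= \begin{pmatrix} \diag(a_1,\dots,a_k) & \diag(b_1,\dots,b_k) \\ 0 & 0\end{pmatrix}.$$
Now apply the permutation similarity $P$ that reorders the standard basis $e_1,\dots,e_k,e_{k+1},\dots,e_{2k}$ as $e_1,e_{k+1},e_2,e_{k+2},\dots,e_k,e_{2k}$. Under this reordering the only nonzero entries, namely $(j,j)=a_j$ and $(j,k+j)=b_j$, fall into the $j$th $2\times2$ diagonal block. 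Hence $T$ is unitarily similar to $T_1\oplus\cdots\oplus T_k$.

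Numerical range is invariant under unitary similarity. For a direct sum it is the convex hull of the summands' ranges: a unit vector $x = (x_1,\dots,x_k)$ with $x_j \in \IC^2$ gives
$$\la Tx,x\ra = \sum_j \|x_j\|^2 \la T_j \hat x_j,\hat x_j\ra, \qquad \hat x_j = x_j/\|x_j\|,$$
which is a convex combination of points of the $W(T_j)$. Conversely, every such convex combination arises by choosing $x_j = \sqrt{t_j}\,\hat x_j$. This gives the first identity. Convexity of each $W(T_j)$ (Toeplitz–Hausdorff) is not even needed here, since the convex hull is taken anyway.

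For the numerical radius, $|\cdot|$ is a convex function, so its maximum over the convex hull of a union of compact sets is attained on the union itself. Therefore $w(T)=\max_j w(T_j)$.

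None of these steps is a real obstacle. The only point requiring care is checking that the permutation correctly interleaves the indices so that each $b_j$ lands in the same block as $a_j$.
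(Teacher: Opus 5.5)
Your proof is correct and follows the paper's route: the similarity by $U\oplus V$ followed by the interleaving permutation makes $T$ unitarily similar to $T_1\oplus\cdots\oplus T_k$. The direct-sum properties $W(\oplus_j T_j)=\conv\bigl(\bigcup_j W(T_j)\bigr)$ and $w(\oplus_j T_j)=\max_j w(T_j)$ then give both identities. The paper simply cites these facts, and your verification of them, including the convexity argument for $w$, is sound.
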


\it Proof. \rm This follows from the fact that the assumption ensures that $T$
is a unitarily similar to a direct sum of $T_j$ so that $W(T)=\conv \left(\cup_{j=1}^k W(T_j)\right)$
and $w(T) = \max \{ w(T_j): 1 \le  j \le k \}.$ \qed

\smallskip
In general, we can write $W(T)$ as the union of $W(T_x)$ for some upper triangular
matrix $T_x \in M_2$ even if $T$ does not have the nice special form in the last proposition.

\begin{theorem}\label{p22}
Let $T = \begin{pmatrix} A & B \cr 0 & 0 \cr\end{pmatrix} \in B(H \oplus H)$.
Then
$$W(T) =\conv  \bigcup \{W(T_x): x \in H, \|x\| = 1\} \quad \hbox{ where } T_x
=  \begin{pmatrix}\la Ax,x\ra & \|B^*x\| \cr 0 & 0 \cr\end{pmatrix}.$$
Consequently,
\begin{eqnarray*}
w(T) &=& \sup \{ w(T_x): x \in B(H), \|x\| = 1\} \\
&=& \sup \{ \frac{1}{2}(|\la Ax, x\ra| + \sqrt{|\la Ax,x\ra|^2 + \|B^*x\|^2}): x \in H, \|x\| = 1\}  \\
&\le& \frac{1}{2}w(A) + \frac{1}{2} \min\{ \sqrt{w(AA^*+ BB^*)}, \sqrt{ w(A)^2 + \|B\|^2}\}.
\end{eqnarray*}
The equality holds if and only if there is a sequence of unit vectors $\{x_m\}$
such that $|\langle Ax_m, x_m\rangle| \rightarrow w(A)$ and

{\rm (1)}  $\|B^*x_m\|^2 \rightarrow  w(AA^*+BB^*) - w(A)^2$, or

{\rm (2)} $\|B^*x_m\| \rightarrow  \|B\|$ and $ w(AA^*+BB^*) \ge w(A)^2 + \|B\|^2$.
\end{theorem}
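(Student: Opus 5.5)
The plan is to prove the two inclusions between $W(T)$ and the union of the $W(T_x)$ directly, then read off the numerical radius and the inequality. For "$\supseteq$": fix a unit $x\in H$. If $B^*x\neq 0$, put $y = B^*x/\|B^*x\|$ and consider the isometry $X:\IC^2\to H\oplus H$ with $Xe_1 = (x,0)$ and $Xe_2=(0,y)$. Then
$$X^*TX = \begin{pmatrix} \la Ax,x\ra & \la By,x\ra \cr 0 & 0\cr\end{pmatrix},$$
and $\la By,x\ra = \la y, B^*x\ra = \|B^*x\|$, so $X^*TX = T_x$. Since compressions satisfy $W(X^*TX)\subseteq W(T)$, we get $W(T_x)\subseteq W(T)$. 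If $B^*x=0$, then $W(T_x)=\{\la Ax,x\ra\}\subseteq W(T)$ already. Since $W(T)$ is convex (Toeplitz--Hausdorff), it contains the convex hull of the union.

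For "$\subseteq$": take a unit vector $(u,v)\in H\oplus H$. Then $\la T(u,v),(u,v)\ra = \la Au,u\ra + \la Bv,u\ra$. If $u=0$ this value is $0$, which lies in every $W(T_x)$. Otherwise write $u = \|u\| x$ with $x$ a unit vector, and decompose $v = \alpha y + v'$, where $y = B^*x/\|B^*x\|$ and $v'\perp B^*x$. This gives $\la Bv,u\ra = \|u\|\alpha\|B^*x\|$. Hence the value equals $\la T_x \xi,\xi\ra$ with $\xi = (\|u\|,\alpha)$ and $\|\xi\|\le 1$. The remaining check is that $\{\la T_x\xi,\xi\ra:\|\xi\|\le1\}=\conv(\{0\}\cup W(T_x)) = W(T_x)$, using $0\in W(T_x)$ (since $T_x$ has eigenvalue $0$); this holds because $\la T_x\xi,\xi\ra = \|\xi\|^2\la T_x\hat\xi,\hat\xi\ra$ with $\hat\xi=\xi/\|\xi\|$. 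So every point of $W(T)$ already lies in a single $W(T_x)$, and the convex hull is harmless.

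The formula for $w(T)$ is then the supremum of $w(T_x)$. For a $2\times2$ matrix $\begin{pmatrix} a& b\cr 0&0\end{pmatrix}$ with $b\ge0$, $W$ is the elliptical disk with foci $0$ and $a$ and minor axis $b$. Its farthest point from $0$ is the end of the major axis on the side of $a$, which gives $w = \frac12(|a|+\sqrt{|a|^2+b^2})$.

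For the inequality, bound $|\la Ax,x\ra|\le w(A)$ in the first term. In the square root there are two bounds:
\begin{itemize}
\item By Cauchy--Schwarz, $|\la Ax,x\ra|^2\le \|A^*x\|^2$, so $|\la Ax,x\ra|^2+\|B^*x\|^2 \le \la (AA^*+BB^*)x,x\ra \le w(AA^*+BB^*)$.
\item Alternatively, $|\la Ax,x\ra|^2+\|B^*x\|^2 \le w(A)^2+\|B\|^2$.
\end{itemize}
Taking the minimum gives the stated bound.

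The main obstacle is the equality characterization. For sufficiency, in case (1) the quantities $|\la Ax_m,x_m\ra|^2+\|B^*x_m\|^2\to w(AA^*+BB^*)$, and this value is $\le w(A)^2+\|B\|^2$ because the min is achieved. I would check that the min equals this expression, arguing from $\la(AA^*+BB^*)x,x\ra\ge |\la Ax,x\ra|^2+\|B^*x\|^2$ by a limit, and plug into the formula. Case (2) is analogous. For necessity, take a maximizing sequence for the supremum. Since both terms in the objective are monotone and each is bounded by its part of the bound, equality forces $|\la Ax_m,x_m\ra|\to w(A)$ and $|\la Ax_m,x_m\ra|^2+\|B^*x_m\|^2\to \min\{\cdots\}$. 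Splitting according to which term attains the min yields (1) or (2), passing to subsequences as needed.
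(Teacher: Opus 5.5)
Your proposal is correct and follows essentially the paper's route: both reduce a general unit vector to the compression of $T$ onto the span of $x\oplus 0$ and $0\oplus B^*x/\|B^*x\|$, which is exactly $T_x$, and then read off $w(T_x)$ and the two bounds. Your projection of $v$ onto $B^*x$, together with $0\in W(T_x)$, simply replaces the paper's observation that the confocal elliptical disk with minor axis $|\la By,x\ra|\le\|B^*x\|$ lies inside $W(T_x)$.

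There is one small slip. When $B^*x=0$, $W(T_x)$ is the segment $[0,\la Ax,x\ra]$, not the singleton $\{\la Ax,x\ra\}$. It still lies in $W(T)$: compress with any unit $y$, since $\la By,x\ra=\la y,B^*x\ra=0$. The segment is also exactly what your ``$\subseteq$'' step needs in that case, where the value is $\|u\|^2\la Ax,x\ra$ and your vector $y=B^*x/\|B^*x\|$ is undefined.
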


\it Proof. \rm Every unit vector $z \in H\oplus H$ can be written as $\cos \theta x \oplus \sin \theta y$
for some unit vectors $x, y \in H$. Thus, $\la Tz,z\ra = (\cos\theta \ \sin \theta)
\tilde T_x (\cos\theta \ \sin \theta)^t$, where
$\tilde T_x = \begin{pmatrix} \la Ax,x\ra & \la By,  x\ra\cr 0 & 0 \cr\end{pmatrix}$.
Note that $W(\tilde T_x)$ is an elliptical disk with foci $\la Ax,x\ra$ and $0$
with minor axis having length $|\la By,x\ra|$.
We may choose $y = B^*x/\|B^*x\|$ to get $T_x$ as described in the theorem
so that $W(\tilde T_x) \subseteq W(T_x)$. Thus, the set
equality holds.

Now, for any  unit vector $x \in H$,
the numerical radius of $w(T_x)$ is attained at the end point of the major axis and
\begin{eqnarray}\label{eq21}
w(T_x)
 &=&\frac{1}{2}\Big(|\la Ax,x\ra| + \sqrt{|\la Ax,x\ra|^2 + \|B^*x\|^2}\Big) \nonumber\\
&\le&  \frac{1}{2}\Big(|\la Ax,x\ra| + \sqrt{\|A^*x\|^2 + \la BB^*x,x\ra}\Big) \\
&=&  \frac{1}{2}\Big(|\la Ax,x\ra| + \sqrt{\la AA^*x, x\ra + \la BB^*x, x\ra}\Big)  \nonumber\\
&\le&   \frac{1}{2}\Big(w(A) + \sqrt{w(AA^*+BB^*)}\Big).\nonumber
\end{eqnarray}
Clearly, we also have
\begin{equation}\label{eq22}
w(T_x) =\frac{1}{2}\Big(|\la Ax,x\ra| + \sqrt{|\la Ax,x\ra|^2 + \|B^*x\|^2}\Big)
\le \frac{1}{2} \Big(w(A) + \sqrt{w(A)^2+\|B\|^2}\Big).\end{equation}
The equality case could be verified readily by \eqref{eq21} and \eqref{eq22}.
\qed

\medskip
Note that the two quantities $\sqrt{w(AA^*+BB^*)}$ and $\sqrt{w(A)^2 + \|B\|^2}$ are not comparable
in general.
For example,
\begin{itemize}
\item[(a)] if $A = \left({\textstyle 0\atop  \textstyle 0} \ {\textstyle 2 \atop \textstyle 0}\right)$ and $B = \left({\textstyle 1\atop  \textstyle 0} \ {\textstyle 0 \atop \textstyle 0}\right)$, then
$w(AA^* + BB^*) = 5 \ge 2 = w(A)^2 + \|B\|^2$;
\item[(b)] if $A = \left({\textstyle 1\atop  \textstyle 0} \ {\textstyle 0 \atop \textstyle 0}\right)$ and $B=\left({\textstyle 0\atop  \textstyle 0} \ {\textstyle 0 \atop \textstyle 1}\right)$,
then $w(AA^*+BB^*) = 1 \le 2 = w(A)^2 + \|B\|^2$.
 \end{itemize}

\medskip
We have the following corollary, which is  Theorem 3.1 in \cite{S}.
\begin{corollary}  Suppose $T = \begin{pmatrix} A & B \cr 0 & 0 \cr\end{pmatrix} \in B(H \oplus H)$.
Then $w(T) \le \frac{1}{2} (\|A\| + \sqrt{\|AA^* + BB^*\|})$.
\end{corollary}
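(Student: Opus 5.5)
The corollary follows directly from Theorem~\ref{p22}, using the first of the two quantities inside the minimum. The theorem gives
$$w(T) \le \frac{1}{2}w(A) + \frac{1}{2}\min\{\sqrt{w(AA^*+BB^*)},\ \sqrt{w(A)^2+\|B\|^2}\} \le \frac{1}{2}\Big(w(A) + \sqrt{w(AA^*+BB^*)}\Big).$$
I would then bound each of the two terms on the right separately.

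For the first term, I use the standard fact that $w(A)\le \|A\|$ for every $A\in B(H)$. This holds because, by the Cauchy--Schwarz inequality, $|\la Ax,x\ra| \le \|Ax\|\,\|x\| \le \|A\|$ for every unit vector $x$.

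For the second term, note that $AA^*+BB^*$ is positive semidefinite, hence self-adjoint. For a self-adjoint (indeed for any normal) operator the numerical radius equals the norm. For a positive operator $P$ this is immediate: $w(P)=\sup_{\|x\|=1}\la Px,x\ra = \|P\|$, since $\|P\| = \|P^{1/2}\|^2 = \sup_{\|x\|=1}\|P^{1/2}x\|^2$. Thus $w(AA^*+BB^*) = \|AA^*+BB^*\|$.

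Combining the two bounds gives $w(T)\le \frac12(\|A\|+\sqrt{\|AA^*+BB^*\|})$. No step is a genuine obstacle. The only point needing care is the identity $w(P)=\|P\|$ for $P\ge 0$ in the infinite-dimensional setting, where the supremum need not be attained. The $P^{1/2}$ argument above handles this without any attainment assumption.
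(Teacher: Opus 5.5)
Your proposal is correct and matches the paper's approach. The paper gives no separate proof and presents the corollary as an immediate consequence of Theorem~\ref{p22}, namely the bound $w(T)\le \frac{1}{2}\big(w(A)+\sqrt{w(AA^*+BB^*)}\big)$ combined with $w(A)\le\|A\|$ and $w(P)=\|P\|$ for $P\ge 0$. Your $P^{1/2}$ argument for that last identity correctly avoids any attainment issue in infinite dimensions.
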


We also have the following.

\begin{corollary} \label{2.4} Let
$T = \begin{pmatrix} A & B \cr 0 & 0 \cr\end{pmatrix} \in B(H \oplus H)$.
If $B_1, B_2\in B(H)$ such that $B_1B_1^* \le BB^* \le B_2B_2^*$ and $T_j$ is obtained
from $T$ be replacing $B$ with $B_j$ for $j = 1,2$, then
$$W(T_1) \subseteq W(T) \subseteq W(T_2).$$
\end{corollary}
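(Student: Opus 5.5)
The plan is to apply Theorem~\ref{p22} to each of $T_1$, $T$, $T_2$. All three share the same $A$, so for each unit vector $x\in H$ the generating $2\times 2$ matrices differ only in the $(1,2)$ entry:
$$(T_j)_x=\begin{pmatrix}\la Ax,x\ra & \|B_j^*x\|\cr 0&0\cr\end{pmatrix},\qquad T_x=\begin{pmatrix}\la Ax,x\ra & \|B^*x\|\cr 0&0\cr\end{pmatrix}.$$
The hypothesis $B_1B_1^*\le BB^*\le B_2B_2^*$ means $\la B_1B_1^*x,x\ra\le\la BB^*x,x\ra\le\la B_2B_2^*x,x\ra$. Since $\la CC^*x,x\ra=\|C^*x\|^2$, this is the same as
$$\|B_1^*x\|\le\|B^*x\|\le\|B_2^*x\| \quad\text{for every unit } x.$$

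The key step is a monotonicity fact for $2\times 2$ matrices. For fixed $a\in\IC$ and $0\le s\le t$,
$$W\begin{pmatrix}a&s\cr0&0\cr\end{pmatrix}\subseteq W\begin{pmatrix}a&t\cr0&0\cr\end{pmatrix}.$$
I would prove this with the device already used in the proof of Theorem~\ref{p22}. The numerical range of $\begin{pmatrix}a&c\cr0&0\cr\end{pmatrix}$ is $\{|u|^2a+c\,v\bar u: |u|^2+|v|^2=1\}$. It depends on $c$ only through $|c|$, because we can rotate $v$ by a phase. As $c$ ranges over the closed disk $|c|\le t$, the value $c\,v\bar u$ covers every value $c'v\bar u$ with $|c'|\le t$. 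So it suffices to show that $W\begin{pmatrix}a&c\cr0&0\cr\end{pmatrix}\subseteq W\begin{pmatrix}a&t\cr0&0\cr\end{pmatrix}$ whenever $|c|\le t$.

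To prove that containment, write $c=\lambda t$ with $|\lambda|\le1$, and for a unit vector $(u,v)$ with $v\neq 0$ set $\lambda v=r\,v'$, where $|v'|=|v|$ and $0\le r\le1$. There are two ways to finish:
\begin{itemize}
\item Use the elliptical disk description with foci $a$ and $0$ and minor axis $|c|$. The ellipses are confocal, and the minor axis increases with $|c|$, so the disks are nested.
\item Use the dilation argument from the proof of Theorem~\ref{p22}, where $\la By,x\ra$ with $\|y\|\le 1$ was replaced by $\|B^*x\|$ to obtain $W(\tilde T_x)\subseteq W(T_x)$.
\end{itemize}
The confocal-nesting route is the cleanest, so I would use it.

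Applying this with $s=\|B_1^*x\|$, $t=\|B^*x\|$, and then with $s=\|B^*x\|$, $t=\|B_2^*x\|$, gives
$$W((T_1)_x)\subseteq W(T_x)\subseteq W((T_2)_x)$$
for every unit $x$. Taking unions over $x$ and then convex hulls preserves inclusion. The set formula in Theorem~\ref{p22} then gives $W(T_1)\subseteq W(T)\subseteq W(T_2)$.

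The only point needing care is the nesting of the $2\times2$ numerical ranges, including the degenerate case $a=0$, where the sets are circular disks of radius $|c|/2$ and are clearly nested. I expect this to be a minor obstacle, handled by the confocal ellipse description quoted in the proof of Theorem~\ref{p22}.
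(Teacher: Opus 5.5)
Your proposal is correct and is essentially the paper's proof. Both use Theorem~\ref{p22} to reduce to the generating matrices $(T_j)_x$, convert $B_1B_1^*\le BB^*\le B_2B_2^*$ into $\|B_1^*x\|\le\|B^*x\|\le\|B_2^*x\|$, and conclude from the nesting of the confocal elliptical disks with foci $0$ and $\la Ax,x\ra$ before taking the union over unit vectors $x$; your phase-rotation and dilation digression is unnecessary once you adopt the confocal-nesting argument.
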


\it Proof. \rm
Note that for any unit vector $x$, $W(T_x)$ is an elliptical disk with foci
$0, \la Ax,x\ra$ and minor axis of length $\|B^*x\|$. For $T_j$ the corresponding
$W((T_j)_x)$ is an elliptical disk with the same foci as $W(T_x)$ and minor axis
of length $\|B_j^*x\|$. Since $B_1B_1^* \le BB^* \le B_2 B_2^*$, we see that
$\|B_1^*x\| \le \|B^*x\| \le \|B_2^*x\|$. Hence,
$W((T_1)_x) \subseteq W(T_x) \subseteq W((T_2)_x)$.
Taking the union of the sets for all unit vectors $x$, we get the inclusion. \qed

\begin{corollary} Let $T$ have the form \eqref{T-form} be such that $B = bI$ with $b \ge 0$ and $W(A)$ is closed.
Then
$$W(T)  = \bigcup \{W(T_x): x\in H,\, \|x\|=1,\,\langle Ax,x\rangle \hbox{ is a boundary point of } W(A)\}.$$
\end{corollary}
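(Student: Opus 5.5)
The plan is to make the description in Theorem~\ref{p22} explicit when $B=bI$, and then push every point of $W(T)$ onto an ellipse $W(T_x)$ whose focus $\la Ax,x\ra$ lies on $\partial W(A)$. For $\mu\in\IC$ write $E(\mu)=W\begin{pmatrix}\mu & b\cr 0&0\cr\end{pmatrix}$. As in the proof of Theorem~\ref{p22}, a unit vector $z=\cos\theta\, x\oplus \sin\theta\, y$ gives
$$\la Tz,z\ra=c^2\mu+bcs\,\xi,\qquad c=\cos\theta,\ s=\sin\theta,\ \mu=\la Ax,x\ra,\ \xi=\la y,x\ra,$$
with $\theta\in[0,\pi/2]$ after absorbing phases into $x,y$, and $|\xi|\le 1$.

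\textbf{Step 1 (the union is already all of $W(T)$).} I would first check that
$$E(\mu)=\{c^2\mu+bcs\xi:\ \theta\in[0,\pi/2],\ |\xi|\le1\}.$$
The formula above then shows $W(T)\subseteq\bigcup_{\mu\in W(A)}E(\mu)$. Conversely, each $E(\la Ax,x\ra)=W(T_x)\subseteq W(T)$, because $T_x$ is a compression of $T$. Here $B^*x=bx$, so $\|B^*x\|=b$. (If $\dim H=1$ the statement is trivial, so assume $\dim H\ge2$ whenever $\xi$ must range over the full disk.) Hence $W(T)=\bigcup_{\mu\in W(A)}E(\mu)$ with no convex hull needed. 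The inclusion "$\supseteq$" in the corollary is then immediate, and it remains to show that each $p\in E(\mu)$ with $\mu$ in the interior of $W(A)$ also lies in some $E(\mu')$ with $\mu'\in\partial W(A)$.

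\textbf{Step 2 (moving the focus to the boundary).} Fix $p=c_0^2\mu+bc_0s_0\xi_0$ with $\mu$ interior. If $c_0=0$ then $p=0\in E(\mu')$ for every $\mu'$, and since $W(A)$ is closed and bounded its boundary is nonempty, so we are done. Likewise if $b=0$, then $E(\mu)=[0,\mu]$, and I extend the ray from $0$ through $\mu$ to a boundary point $\mu'$ of the compact convex set $W(A)$; then $p\in[0,\mu']$.

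For $c\in(0,c_0]$, let $D_c$ be the closed disk with center $p/c^2$ and radius $bs/c$, where $s=\sqrt{1-c^2}$. Then $\mu'\in D_c$ exactly when $p=c^2\mu'+bcs\xi'$ for some $|\xi'|\le1$, and by Step 1 this places $p\in E(\mu')$. We know $\mu\in D_{c_0}$, and $\mu\in W(A)$. As $c\to0^+$ the radius $bs/c\to\infty$, so for small $c$ the disk $D_c$ contains points outside the bounded set $W(A)$. The family $c\mapsto D_c$ varies continuously, so $\bigcup_{c\in(0,c_0]}D_c$ is connected. This union meets both $W(A)$ and its complement, hence meets $\partial W(A)$; here closedness of $W(A)$ guarantees that the boundary point obtained belongs to $W(A)$. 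So some $\mu'=\la Ax',x'\ra\in\partial W(A)$ lies in some $D_c$, which gives $p\in E(\mu')=W(T_{x'})$.

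\textbf{Main obstacle.} I expect the hardest part to be the connectedness argument in Step 2. One must verify carefully that the union of the continuously varying disks is connected, for instance because each disk is connected and consecutive disks overlap, and one must use closedness of $W(A)$ to ensure the boundary point found is attained by some unit vector $x'$. Step 1 is routine, but it must be recorded, since it is what removes the convex hull from Theorem~\ref{p22} in this special case.
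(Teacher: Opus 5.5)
Your proof is correct, and it takes a genuinely different route from the paper's.

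\textbf{The paper's route.} The paper argues only through supporting lines. Every $W(T_x)$ has the same minor axis $b$, so its support value in direction $\phi$ is $\frac12\bigl(r+\sqrt{r^2+b^2}\bigr)$ with $r=\re(e^{i\phi}\la Ax,x\ra)$, and this is increasing in $r$. Hence each supporting line of $W(T)$ is realized by an ellipse whose focus maximizes $\re(e^{i\phi}\mu)$ over $W(A)$, and that focus lies on $\partial W(A)$. As written, this gives equality of $W(T)$ with the \emph{convex hull} of the boundary-focus ellipses (in fact of those whose focus is an extreme point of $W(A)$). It leaves implicit why the plain union is already convex.

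\textbf{Your route.} Step 1 shows the convex hull in Theorem~\ref{p22} is superfluous, using $|\xi|\le 1$ together with the fact that $T_x$ is a compression of $T$. Step 2 uses a connected family of disks $D_c$ that starts at $\mu\in W(A)$ and eventually leaves the bounded set $W(A)$, so it must meet $\partial W(A)\subseteq W(A)$. Together these prove the literal pointwise statement, using only compactness of $W(A)$ and not its convexity.

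\textbf{What each buys.} The support-line view is shorter, and it shows which foci control $w(T)$ and the $\lambda_\theta$ formula after Theorem~\ref{support-line}. Your argument is what the union without a convex hull actually requires, and it genuinely needs non-extreme boundary points. For example, if $W(A)=[-1,1]$ and $b>0$, the point $ib/2\in W(T)$ lies in neither ellipse with focus $\pm1$.

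\textbf{Two harmless loose ends.} In your case $b=0$ you should also allow $\mu=0$; then $p=0$, which is handled as in the case $c_0=0$. The assumption $\dim H\ge 2$ is unnecessary: $|\xi|\le 1$ holds by Cauchy--Schwarz, and $T_x$ is a compression of $T$ in any dimension.
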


\it Proof. \rm Clearly, $W(T_x)$ has the same minor axis for every unit vector $x$.
Hence, the right supporting line of $W(T)$ is determined by the endpoint of the major axis of $W(T_x)$.
Therefore, we only need to focus on those $T_x$ for which $\langle Ax,x\rangle$ is a boundary point of $W(A)$.
\qed

If $A\in M_2$ is unitary, after a rotation,
then $A$ is unitarily similar to $\cos\theta I_2 + i\sin\theta(E_{11}-E_{22})$, where $E_{11}=\left({\textstyle 1\atop  \textstyle 0} \ {\textstyle 0 \atop \textstyle 0}\right)$ and $E_{22}=\left({\textstyle 0\atop  \textstyle 0} \ {\textstyle 0 \atop \textstyle 1}\right)$.
We have the following.

\begin{corollary}
Let $T$ have the form \eqref{T-form} with $A, B \in M_2$ such that
$A = \cos\theta I_2 + i\sin \theta (E_{11}-E_{22})$ with $\theta \in [0,\pi/2]$.
Then $W(T) = \bigcup \{W(T_r): r \in [-\sin\theta , \sin\theta]\}$, where $T_r = \begin{pmatrix} \cos\theta + ir & c \cr 0 & 0 \cr
\end{pmatrix}$, and $(r, c^2)$ ranges over the joint numerical range
$$
W(\im A,\, BB^*) = \{ (\langle (\im A)x,x\rangle,\, \langle BB^* x,x\rangle ) : \|x\|=1 \}.
$$
\end{corollary}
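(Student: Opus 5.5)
The plan is to reduce everything to Theorem~\ref{p22}. That theorem says $W(T)$ is the convex hull of the sets $W(T_x)$ over unit vectors $x\in\IC^2$, where
$$T_x=\begin{pmatrix}\la Ax,x\ra & \|B^*x\|\\ 0&0\end{pmatrix}.$$
So the task is to recompute the two entries of $T_x$ for this particular $A$, check that the resulting family of $2\times 2$ matrices is exactly $\{T_r\}$, and then remove the convex hull.

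\textbf{Step 1: identify the entries.} Since $A=\cos\theta I_2+i\sin\theta(E_{11}-E_{22})$, we have $\re A=\cos\theta I_2$ and $\im A=\sin\theta(E_{11}-E_{22})$. Hence for a unit vector $x$,
$$\la Ax,x\ra=\cos\theta+i\,r,\qquad r=\la(\im A)x,x\ra .$$
Also $\|B^*x\|^2=\la BB^*x,x\ra=c^2$ with $c=\|B^*x\|\ge 0$. Therefore $T_x=T_r$, where $(r,c^2)=(\la(\im A)x,x\ra,\la BB^*x,x\ra)$, and this pair runs over $W(\im A,BB^*)$ as $x$ runs over the unit sphere. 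Since $W(\im A)=[-\sin\theta,\sin\theta]$, the first coordinate $r$ lies in that interval. This gives
$$W(T)=\conv\bigcup\{W(T_r):(r,c^2)\in W(\im A,BB^*)\}.$$

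\textbf{Step 2: drop the convex hull.} This is the main step, and I would rely on the convexity of the joint numerical range of two Hermitian matrices (Toeplitz--Hausdorff applied to $\im A+iBB^*$). Take $z\in W(T)$, so $z=\la Tu,u\ra$ for some unit $u=\cos\phi\, x\oplus\sin\phi\, y\in\IC^4$. By the proof of Theorem~\ref{p22}, $z\in W(\tilde T_x)$, and we chose $y$ so that $W(\tilde T_x)\subseteq W(T_x)$. Thus every point of $W(T)$ already lies in a single $W(T_x)$, not merely in the convex hull of several. The computation there gives
$$\la Tu,u\ra=(\cos\phi\ \ \sin\phi)\,\tilde T_x\,(\cos\phi\ \ \sin\phi)^t,$$
which lies in $W(\tilde T_x)$. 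Since $W(\tilde T_x)\subseteq W(T_x)$ (same foci, smaller minor axis), we get $W(T)\subseteq\bigcup_x W(T_x)$. The reverse inclusion $\bigcup_x W(T_x)\subseteq W(T)$ holds because each $T_x$ is a compression of $T$: use $x\oplus 0$ and $0\oplus B^*x/\|B^*x\|$ as an orthonormal pair, and if $B^*x=0$ any unit $y$ works. So $W(T)=\bigcup_x W(T_x)$ with no convex hull needed.

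\textbf{Step 3: assemble.} Combining the two steps,
$$W(T)=\bigcup\{W(T_r):(r,c^2)\in W(\im A,BB^*)\},$$
with $r\in[-\sin\theta,\sin\theta]$. The restriction $\theta\in[0,\pi/2]$ only normalizes the sign of $\cos\theta$ and plays no role in the argument. Convexity of the joint numerical range is not actually needed for the equality itself; it only serves to describe the parameter set as a convex (elliptical) region. The one point to verify carefully is the compression claim in Step 2, namely that $W(T_x)\subseteq W(T)$ when $B^*x\ne 0$, which is immediate from the choice of orthonormal pair.
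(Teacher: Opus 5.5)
Your proof is correct and follows the paper's route: the corollary is stated without a separate proof as an immediate consequence of Theorem~\ref{p22}, via $\la Ax,x\ra=\cos\theta+i\la(\im A)x,x\ra$ and $\|B^*x\|^2=\la BB^*x,x\ra$. You also correctly observe that the proof of Theorem~\ref{p22} already gives the union without the convex hull, since each $\la Tz,z\ra$ lies in $W(\tilde T_x)\subseteq W(T_x)$ and each $T_x$ is a compression of $T$. As you note yourself, the appeal to convexity of the joint numerical range is unnecessary and can be dropped.
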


Next, we consider maximum $\sigma(\re (e^{i\theta}T))$, which allows us
to determine  the support lines of $\cl(W(T))$, the closure of $W(T)$, and reduces
the computation of $w(T)$ to an optimization problems over a parameter $\theta \in [0, 2\pi)$.

\begin{theorem} \label{support-line}
Suppose $T\in B(H \oplus H)$ has the form {\rm (\ref{T-form})} with $A, B \in B(H)$.
Let
$$T_\theta = \re(e^{i\theta}T) = \frac{1}{2}
\begin{pmatrix} e^{i\theta} A + e^{-i\theta} A & e^{i\theta} B \cr
e^{-i\theta} B^* & 0 \cr\end{pmatrix}, \qquad \theta \in [0, 2\pi).$$
Then
\begin{eqnarray}\label{eq23}
 \lambda_{\max}(T_\theta) &=&
\inf \{ \lambda \ge 0: 4\lambda\big(\lambda I - \re (e^{i\theta} A)\big)
\ge BB^*\} \nonumber\\
& \ge &   \frac{1}{2} \lambda_{\max}\Big( \re(e^{i\theta}A) +
\sqrt{(\re(e^{i\theta}A))^2 + BB^*}\Big).
\end{eqnarray}
and
\begin{eqnarray*}w(T)&=&\max_{0\le \theta<2\pi}\inf \{ \lambda \ge 0: 4\lambda\big(\lambda I - \re (e^{i\theta} A)\big)
\ge BB^*\}\\
&\ge & \frac{1}{2} \max_{0\le \theta< 2\pi} \lambda_{\max}\Big( \re(e^{i\theta}A) +
\sqrt{(\re(e^{i\theta}A))^2 + BB^*}\Big).\end{eqnarray*}
\end{theorem}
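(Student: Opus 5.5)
The plan is to compute $\lambda_{\max}(T_\theta)$ as the least $\lambda$ with $\lambda I - T_\theta \ge 0$, and to test that positivity by a Schur complement. Throughout, write $C = \re(e^{i\theta}A)$. The upper-left block of $T_\theta$ is then $C$ (the displayed $e^{-i\theta}A$ there should read $e^{-i\theta}A^*$), so
$$T_\theta = \begin{pmatrix} C & \tfrac12 e^{i\theta}B \cr \tfrac12 e^{-i\theta}B^* & 0\cr\end{pmatrix}.$$
Since $T_\theta$ is self-adjoint, $\lambda_{\max}(T_\theta) = \sup W(T_\theta) = \inf\{\lambda : \lambda I - T_\theta \ge 0\}$. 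Taking unit vectors $z = 0\oplus y$ gives $\la T_\theta z, z\ra = 0$, so $\lambda_{\max}(T_\theta) \ge 0$. It therefore suffices to take the infimum over $\lambda \ge 0$, or even over $\lambda > 0$ and then pass to the limit.

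For $\lambda > 0$ the lower-right block of $\lambda I - T_\theta$ is $\lambda I$, which is invertible. By the Schur complement criterion, which holds verbatim for bounded operators with an invertible diagonal block, $\lambda I - T_\theta \ge 0$ if and only if
$$\lambda I - C - \tfrac{1}{4\lambda}BB^* \ge 0, \quad\text{that is,}\quad 4\lambda(\lambda I - C) \ge BB^*.$$
This gives the equality in \eqref{eq23} for all admissible $\lambda > 0$. The endpoint $\lambda = 0$ only matters when $\lambda_{\max}(T_\theta) = 0$. In that case the condition reads $0 \ge BB^*$, which forces $B = 0$ and $C \le 0$, and both sides equal $0$; alternatively, the set of admissible $\lambda$ is closed under increase, so the infimum is unaffected.

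For the inequality, I take any $\lambda > 0$ with $4\lambda(\lambda I - C) \ge BB^*$. Completing the square gives $(2\lambda I - C)^2 \ge C^2 + BB^*$. Moreover $\lambda I - C \ge \frac{1}{4\lambda}BB^* \ge 0$, so $2\lambda I - C = \lambda I + (\lambda I - C) \ge 0$. Since $t \mapsto \sqrt t$ is operator monotone on positive operators, it follows that $2\lambda I - C \ge \sqrt{C^2 + BB^*}$. Hence $2\lambda I \ge C + \sqrt{C^2+BB^*}$, and so $2\lambda \ge \lambda_{\max}\big(C + \sqrt{C^2+BB^*}\big)$. Taking the infimum over admissible $\lambda$ gives the lower bound in \eqref{eq23}. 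This operator-monotonicity step is the one real point to check: one must not square an operator inequality directly, and positivity of $2\lambda I - C$ is exactly what makes the square-root comparison legitimate.

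Finally, I use the standard identity $w(T) = \sup_\theta \sup \re\big(e^{i\theta}W(T)\big) = \sup_\theta \lambda_{\max}(\re(e^{i\theta}T))$. The function $\theta \mapsto \lambda_{\max}(T_\theta)$ is continuous, indeed Lipschitz since $\|T_\theta - T_{\theta'}\| \le \|T\|\,|\theta-\theta'|$. So the supremum over the compact interval $[0,2\pi]$ is a maximum. Substituting the equality and the inequality of \eqref{eq23} then gives both displayed formulas for $w(T)$.
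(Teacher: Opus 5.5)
Your argument is essentially the paper's: a Schur complement with respect to the invertible block $\lambda I$, then completing the square to $(2\lambda I - C)^2 \ge C^2+BB^*$, then operator monotonicity of the square root after checking $2\lambda I - C\ge 0$, and finally $w(T)=\max_\theta\lambda_{\max}(T_\theta)$. The paper obtains the positivity from $\lambda I-T_\theta\ge 0$, while you obtain it from the Schur complement. Your correction $e^{-i\theta}A\to e^{-i\theta}A^*$ and your Lipschitz justification that the maximum over $\theta$ is attained are sound additions.

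The one step that fails is your treatment of the endpoint $\lambda=0$. Both of your claims there are false when $B=0$: that the endpoint only matters when $\lambda_{\max}(T_\theta)=0$, and that the admissible set is closed under increase. With $B=0$, the value $\lambda=0$ satisfies $4\lambda(\lambda I-C)\ge BB^*$ for every $C$. So the admissible set is $\{0\}\cup\{\lambda>0:\lambda I\ge C\}$, and its infimum is $0$ even when $\lambda_{\max}(C)>0$. For example, $A=I$, $B=0$, $\theta=0$ gives $\lambda_{\max}(T_0)=1$ but infimum $0$. The $w(T)$ formula would then give $0$ for every $\theta$, whereas $w(T)=1$.

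So this is not a patchable gap in your proof: the statement as literally written is false in the degenerate case $B=0\ne A$. The paper's proof has the same blind spot, since it passes to $\lambda I-C\ge BB^*/(4\lambda)$, silently dividing by $\lambda$. The correct version takes the infima over $\lambda>0$, or assumes $B\ne 0$. Under that reading your Schur-complement step gives the equality directly: $\{\lambda>0:\lambda I\ge T_\theta\}=(0,\infty)\cap[\lambda_{\max}(T_\theta),\infty)$ has infimum $\lambda_{\max}(T_\theta)$ because $\lambda_{\max}(T_\theta)\ge 0$. The rest of your argument then goes through unchanged.
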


\it Proof. \rm Note that $T_{\theta}$ is
unitarily similar to
$\frac{1}{2}
\begin{pmatrix} e^{i\theta} A + e^{-i\theta} A & B \cr
B^* & 0 \cr\end{pmatrix}$ and $0\in W(T_{\theta})$. Therefore,
$$\lambda_{\max}(T_\theta) = \inf\{\lambda\ge 0 : \lambda I - T_\theta \ge 0\}$$
and
$$\lambda I - T_\theta =  \begin{pmatrix} \lambda I - \re(e^{i\theta} A) & -B/2\cr
-B^*/2 & \lambda I\cr\end{pmatrix},$$
which is positive semidefinite if and only if
$\lambda I - \re(e^{i\theta} A) \ge BB^*/(4\lambda)$. The first equality holds.

Note that $\lambda^2 I - \lambda \re(e^{i\theta} A) -  BB^*/4\ge 0$ is equivalent to
$$\left(\lambda I - \frac{1}{2}\re(e^{i\theta}A)\right)^2
\ge  \frac{1}{4} \left( (\re(e^{i\theta}A))^2 + BB^* \right).$$
Since $\lambda I - T_\theta\ge 0$,
we see that $\lambda I - \re(e^{i\theta}A)\ge 0$ and $\lambda \ge 0$.
So,  $\lambda I - \frac{1}{2}\re(e^{i\theta}A) \ge 0$.
It is  known that if $P \ge  Q \ge 0$ then $\sqrt P \ge \sqrt Q$.
So, $\lambda I - \frac{1}{2}\re(e^{i\theta}A)
\ge \frac{1}{2} \sqrt {(\re(e^{i\theta}  A))^2+BB^*}.$
This shows that
$$\Big\{ \lambda \ge 0:  \lambda I - T_\theta \ge 0\Big\}\subseteq \Big\{ \lambda \ge 0: \lambda I - \frac{1}{2}\Big(\re(e^{i\theta}A)
+ \sqrt {(\re(e^{i\theta}  A))^2+BB^*}\Big)\ge 0\Big\}.$$
That is,
$$\inf \Big\{ \lambda \ge 0:  \lambda I - T_\theta \ge 0\Big\}\ge \inf \Big\{ \lambda \ge 0: \lambda I - \frac{1}{2}\Big(\re(e^{i\theta}A)
+ \sqrt {(\re(e^{i\theta}  A))^2+BB^*}\Big)\ge 0\Big\}.$$
The inequality \eqref{eq23} holds. The final inequality follows from $w(T)=\max_{0\le \theta< 2\pi} \lambda_{\max}(T_{\theta})$.
\qed

\medskip
Note that Theorem \ref{p22} provides upper bounds for $w(T)$ whereas Theorem \ref{support-line} provides a lower bound for $w(T)$. Moreover, in Theorem \ref{p22}, the computation of $w(T)$ reduces to a maximization problem
of $w(T_x)$ for
$x \in H$ with $\|x\| = 1$. In Theorem \ref{support-line}, the computation of $w(T)$ reduced to a
maximization problem
$$\max_{0\le \theta< 2\pi} \inf \{ \lambda \ge 0: 4\lambda\big(\lambda I - \re (e^{i\theta} A)\big)
\ge BB^*\}$$
with two parameters $\lambda \ge 0$ and $\theta \in [0, 2\pi)$. Furthermore,
for a fixed $\theta \in [0, 2\pi)$, one may consider $\lambda > 0$ in a compact set to determine
$$\lambda_\theta = \inf\{\lambda \ge 0:
4\lambda\big(\lambda I - \re (e^{i\theta} A)\big)
\ge BB^*\}$$
and then solve the original optimization by taking $\max\{ \lambda_\theta: \theta \in [0,2\pi]\}$.

\medskip

If $A = \diag(a_1, \dots, a_k)$ and  $B = \diag(b_1, \dots, b_k) \in M_k$, then
\begin{eqnarray*}
\lambda_\theta
&=&  \inf\{ \lambda\ge 0: 4\lambda\big(\lambda - \re(e^{i\theta} a_j) \big)\ge |b_j|^2, j= 1, \dots, k\}\cr
&=&
\frac{1}{2} \max\{ \re(e^{i\theta}a_j) + \sqrt{ (\re(e^{i\theta}a_j))^2 + |b_j|^2}: j = 1, \dots, k\}.
\end{eqnarray*}

A similar formula remains valid for infinite-dimensional operators whenever $\re A$, $\im A$, and $BB^*$ commute, by the spectral theorem.

\section{When the $(1,1)$ block is a self-adjoint involution}

In this section, we consider the special case when
$A$ is a self-adjoint involution operator,
i.e., $A = A^*$ and $A^2= I$.
In such a case, we may assume apply a unitary similar of the form $U = U_1 \oplus U_2$ to $T$,
and assume $A = I_{H_1} \oplus -I_{H_2}$ so that $(\re(e^{i\theta}  A))^2 = \cos^2\theta I$.
By Theorem \ref{support-line}
and the discussion after it,
$$w(T) = \max_{\theta \in [0, 2\pi]}
\inf \{ \lambda \ge 0: 4\lambda\big(\lambda I - \re (e^{i\theta} A)\big)  \ge BB^* \}.$$
In particular, if $A\in M_k$ and $B = \diag(b_1, \dots, b_k)$, then
$$\max\sigma(T_\theta)=
\frac{1}{2}\max\sigma\left(\cos \theta A + \diag\Big(\sqrt{\cos^2\theta +| b_1|^2},
\dots, \sqrt{\cos^2\theta + |b_k|^2}\Big)\right).$$

Suppose  $T$ has the form \eqref{T-form}
such that $A=A^*$ and $A^2 = I$.
We determine the condition on $T$ such that $W(T)$ is an elliptical disk in the following.

\begin{theorem} \label{3.1}
Let  $T=\left(
         \begin{array}{cc}
           A & B \\
           0 & 0 \\
         \end{array}
       \right) \in M_n
$ such that $A = A^* \in M_k$ with $1\le k<n$
satisfying  $A^2 = I_k$ and $B\neq 0$.
The following statements are equivalent.
\begin{itemize}
\item[{\rm (a)}]  $W(T)$ is an elliptical disk.
\item[{\rm (b)}] The matrix $T$ is unitarily similar to a direct sum $T_1\oplus T_2$ with
$W(T_2) \subseteq W(T_1)$, where $T_1$ is either
$$
\left(
  \begin{array}{cc}
    a & \|B\| \\
    0 & 0 \\
  \end{array}
\right)\ \mbox{with } a\in\{1, -1\} \quad \mbox{ or }\quad \left(
                    \begin{array}{ccc}
                      0 & 1 & \|B\| \\
                      1 & 0 & 0 \\
                      0 & 0 & 0 \\
                    \end{array}
                  \right).
$$
\end{itemize}
\end{theorem}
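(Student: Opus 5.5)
We prove (b)$\Rightarrow$(a) by computing support functions, and (a)$\Rightarrow$(b) by comparing the support function of $W(T)$ with that of an ellipse and then splitting off a reducing subspace. Throughout, apply a unitary $U_1\oplus U_2$ so that $A=I_p\oplus(-I_q)$ with $p+q=k$, and put $b=\|B\|>0$. By Theorem~\ref{support-line} and the discussion after it, the support function of $\cl(W(T))$ in direction $\theta$ is
$$h_T(\theta)=\lambda_\theta=\inf\{\lambda\ge 0:\ 4\lambda(\lambda I-\cos\theta\, A)\ge BB^*\}.$$

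\emph{(b)$\Rightarrow$(a).} Since $W(T_1\oplus T_2)=\conv(W(T_1)\cup W(T_2))=W(T_1)$, it suffices to check that each candidate $T_1$ has an elliptical disk as numerical range. For $\begin{pmatrix} a&b\\0&0\end{pmatrix}$ this is the classical $2\times2$ fact: the foci are $0,a$ and the minor axis is $b$. For $M=\begin{pmatrix}0&1&b\\1&0&0\\0&0&0\end{pmatrix}$, note that $M$ is itself of the form \eqref{T-form}, with $A'=\begin{pmatrix}0&1\\1&0\end{pmatrix}$ and $B'B'^*=\diag(b^2,0)$. With $c=\cos\theta$, the condition $4\lambda^2I-4\lambda cA'-\diag(b^2,0)\ge0$ reduces, via the determinant of this $2\times 2$ matrix and its $(2,2)$ entry, to $\lambda^2\ge c^2+b^2/4$. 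Hence $h_M(\theta)=\sqrt{\cos^2\theta+b^2/4}$. This is exactly the support function of the elliptical disk with foci $\pm1$ and minor axis $b$, and since $M\in M_3$ its numerical range is closed, so it equals this disk.

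\emph{(a)$\Rightarrow$(b), reductions.} By Theorem~\ref{p22}, $W(T)\supseteq W(T_x)$ for every unit $x$. Taking $y=0$ gives $W(A)\subseteq W(T)$. Taking $x$ to be a unit vector with $BB^*x=b^2x$, we also have $W(T)\supseteq W(T_x)$, an ellipse with minor axis $b$. Moreover $h_T(\pi/2)=h_T(3\pi/2)=b/2$, because $\cos\theta=0$ there and $\lambda_\theta=\sqrt{\|BB^*\|}/2$. So the ellipse $E=W(T)$ has vertical width exactly $b$, and its horizontal extent is determined by $h_T(0)$ and $h_T(\pi)$.

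\emph{Pure case $q=0$ (or $p=0$).} All $T_x=\begin{pmatrix}1&\|B^*x\|\\0&0\end{pmatrix}$ have nested numerical ranges, so $W(T)=W\begin{pmatrix}1&b\\0&0\end{pmatrix}$. Take $x$ to be a top singular vector of $B^*$ and $y=B^*x/b$. A direct check shows that $\operatorname{span}\{x\oplus0,\ 0\oplus y\}$ is reducing for $T$: $T(x\oplus0)=x\oplus0$, $T(0\oplus y)=bx\oplus0$, $T^*(x\oplus0)=x\oplus by$, $T^*(0\oplus y)=0$. The compression to this subspace is $T_1$, and the complementary summand $T_2$ automatically satisfies $W(T_2)\subseteq W(T)=W(T_1)$.

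\emph{Mixed case $p,q\ge1$, and the main obstacle.} Here $[-1,1]\subseteq E$. The hardest step is to show that the ellipse condition forces one of two rigid configurations.
\begin{itemize}
\item[(i)] Some top eigenvector $x$ of $BB^*$ is an eigenvector of $A$. Then the same reducing-subspace argument as above yields $T_1=\begin{pmatrix}\pm1&b\\0&0\end{pmatrix}$.
\item[(ii)] Otherwise $E$ has foci $\pm1$ and minor axis $b$. In that case one needs unit vectors $u\in\ker(A-I)$, $v\in\ker(A+I)$ such that $x=(u+v)/\sqrt2$ satisfies $BB^*x=b^2x$, and such that $\operatorname{span}\{x\oplus0,\ (u-v)/\sqrt2\oplus0,\ 0\oplus B^*x/b\}$ reduces $T$ to $M$.
\end{itemize}
To establish this dichotomy, I would first note that an ellipse containing $0$ whose support function equals $h_T$ at $\theta=\pm\pi/2$ must have foci on the real axis. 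Next, I would use $h_T(\theta)^2=\sup_x(\ldots)$ together with the tangency of $W(T_x)$ to $\partial E$: the boundary point of $E$ in direction $\pi/2$ must be attained by some $W(T_x)$ with $\|B^*x\|=b$ and with $\langle Ax,x\rangle$ equal to the sum of the foci. This pins down $\langle Ax,x\rangle\in\{\pm1\}$ in case (i), and $\langle Ax,x\rangle=0$ together with foci $\pm1$ in case (ii). Finally, I would use equality in the operator inequality $4\lambda(\lambda I-\cos\theta A)\ge BB^*$ for all $\theta$ at $x$ to conclude that $BB^*$ maps $\operatorname{span}\{Ax,x\}$ into itself, which gives the reducing subspace. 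The containment $W(T_2)\subseteq W(T_1)$ is then automatic, since $W(T_1)\subseteq W(T)=E$ are both ellipses with the same support function and hence coincide.
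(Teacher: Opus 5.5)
\textbf{Verdict: there is a genuine gap.} Your (b)$\Rightarrow$(a) argument and your pure case are fine. The support-function computation $h_M(\theta)=\sqrt{\cos^2\theta+b^2/4}$ is a self-contained replacement for the paper's appeal to [KRS]. The gap is in the mixed case of (a)$\Rightarrow$(b), at the step you yourself call the main obstacle: nothing in your plan determines the foci of $E$.

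Your tangency observation is correct, and it is the paper's computation of the center $c$: for every unit $x$ with $BB^*x=b^2x$, one has $\langle Ax,x\rangle=2c$. But this only ties $\langle Ax,x\rangle$ to an unknown center. It does not force $c\in\{0,\pm\frac12\}$, it says nothing about the major axis, and it does not exclude a circular disk. So in case (ii) there is no argument for $\langle Ax,x\rangle=0$ or for foci $\pm1$.

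The missing idea is the one the paper uses. The foci of an elliptical numerical range of a matrix are eigenvalues of $T$, and here $T^3=T$, so $\sigma(T)\subseteq\{0,1,-1\}$. A circular disk must also be excluded. The paper cites Corollary 4 of [W]. Alternatively, a disk would have radius $b/2$, so $W(T_x)\subseteq E$ forces $\langle Ax,x\rangle=0$. The compression of $T$ to $\operatorname{span}\{x\oplus0,\,Ax\oplus0,\,0\oplus B^*x/b\}$ is then your $M$, whose numerical range has horizontal semi-axis $\sqrt{1+b^2/4}>b/2$, a contradiction. Once the foci are known to be $\{0,1\}$, $\{0,-1\}$ or $\{-1,1\}$ with minor axis $b$, $E$ is one of three explicit ellipses. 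Only then does your tangency identity give the dichotomy: $c=\pm\frac12$ forces $Ax=\pm x$, and otherwise $c=0$.

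There are also three smaller problems.
\begin{itemize}
\item \emph{Real-axis foci.} Matching $h_T$ at $\theta=\pm\pi/2$ does not force real foci, since a tilted ellipse can have the same vertical extent. The correct reason is that $\re(e^{i\theta}T)$ is unitarily similar to $\begin{pmatrix}\cos\theta A&B/2\\ B^*/2&0\end{pmatrix}$. Hence $h_T(\theta)=h_T(-\theta)$ and $W(T)$ is symmetric about $\IR$.
\item \emph{Case (i).} ``Same support function'' is what must be proved, so it cannot serve as the justification. It does follow, for $Ax=x$, from the bound $h_T(0)\le w(T)\le\frac12(1+\sqrt{1+b^2})$ of Theorem~\ref{p22}, combined with center $\frac12$ and $W(T_1)\subseteq E$. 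The case $Ax=-x$ is symmetric.
\item \emph{Case (ii).} Here $x$ is not a null vector of $4\lambda_\theta(\lambda_\theta I-\cos\theta A)-BB^*$ when $\cos\theta\ne0$: its quadratic form at $x$ equals $4\cos^2\theta$. Moreover, invariance of $\operatorname{span}\{x,Ax\}$ under $BB^*$ only yields $BB^*Ax=\gamma Ax$. That leaves the $4\times4$ block $\begin{pmatrix}A_{\pi/2}&\diag(b,\sqrt\gamma)\\0&0\end{pmatrix}$ rather than $M$, and the paper kills $\gamma$ with Lemma~\ref{p31}.
\end{itemize}

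Your idea for case (ii) can be repaired. Once $\lambda_\theta^2=\cos^2\theta+b^2/4$ is known, positivity of the compression of $4\lambda_\theta(\lambda_\theta I-\cos\theta A)-BB^*$ to $\operatorname{span}\{x,Ax\}$ reads
$\begin{pmatrix}4\cos^2\theta&-4\lambda_\theta\cos\theta\\-4\lambda_\theta\cos\theta&4\lambda_\theta^2-\|B^*Ax\|^2\end{pmatrix}\ge0$.
This gives $\|B^*Ax\|^2\cos^2\theta\le0$, hence $B^*Ax=0$. That is exactly what makes the three-dimensional subspace reducing, and it is a neat substitute for Lemma~\ref{p31}.
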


To prove Theorem \ref{3.1}, we need the following lemma.

\begin{lemma}\label{p31}
Let $\theta\in [-\pi,\pi]$, $\alpha_1>\alpha_2\ge 0$,
$$A_\theta=\left(
                                            \begin{array}{cc}
                                              \cos\theta & \sin\theta \\
                                              \sin\theta & -\cos\theta \\
                                            \end{array}
                                          \right),  \ \ \
                                          D=\begin{pmatrix} \alpha_1 & 0 \\ 0 & \alpha_2\end{pmatrix} \ \ \mbox{ and } \ \ T_\theta=\left(
                \begin{array}{cc}
                  A_\theta & D \\
                  0_2 & 0_2 \\
                \end{array}
              \right).$$
Then
$$ w(T_{\pm\pi/2})
=\frac{\sqrt{4+\alpha_1^2+\alpha_2^2+\sqrt{16+8\alpha_1^2+8\alpha_2^2+(\alpha_1^2-\alpha_2^2)^2}}}{2\sqrt{2}} < w(T_\theta) \  \mbox{ for all } \theta\neq \pm \frac{\pi}{2}.
$$
\end{lemma}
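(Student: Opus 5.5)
The plan is to compute everything through Theorem~\ref{support-line}. Since $A_\theta$ is Hermitian, $\re(e^{i\varphi}A_\theta)=\cos\varphi\, A_\theta$, so for each $\varphi$
$$\lambda_\varphi(\theta)=\inf\{\lambda\ge 0:\ 4\lambda^2 I-4\lambda\cos\varphi\, A_\theta-D^2\ge 0\},\qquad w(T_\theta)=\max_\varphi \lambda_\varphi(\theta).$$
The $2\times 2$ matrix $M(\lambda)=4\lambda^2 I-4\lambda c A_\theta-D^2$, with $c=\cos\varphi$, is positive semidefinite for large $\lambda$. Its eigenvalues depend continuously on $\lambda$. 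Hence $\lambda_\varphi(\theta)$ is the largest real root of $\det M(\lambda)=0$ (or $0$ if there is none).

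First I will compute the determinant. Put $P=4\lambda^2-\alpha_1^2$, $Q=4\lambda^2-\alpha_2^2$ and $x=4\lambda c\cos\theta$. Then
$$\det M(\lambda)=(P-x)(Q+x)-16\lambda^2c^2\sin^2\theta = PQ-16\lambda^2c^2-4\lambda c\cos\theta(\alpha_1^2-\alpha_2^2),$$
where I have used $x^2+16\lambda^2c^2\sin^2\theta=16\lambda^2c^2$.

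For $\theta=\pm\pi/2$ the last term vanishes, leaving $PQ-16\lambda^2c^2$. This is decreasing in $c^2$ for $\lambda>0$, so its largest root increases with $c^2$. The maximum over $\varphi$ is therefore attained at $c=\pm1$. The equation then reads
$$16\lambda^4-4\lambda^2(4+\alpha_1^2+\alpha_2^2)+\alpha_1^2\alpha_2^2=0.$$
Taking the larger root in $\lambda^2$ and simplifying $(4+\alpha_1^2+\alpha_2^2)^2-4\alpha_1^2\alpha_2^2=16+8\alpha_1^2+8\alpha_2^2+(\alpha_1^2-\alpha_2^2)^2$ gives exactly the displayed value $\lambda_0$. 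To justify the monotonicity claim I will note that increasing $c^2$ lowers the quartic pointwise. Because the quartic tends to $+\infty$, its largest root can only move right.

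For $\cos\theta\ne0$, I will choose $c=\pm1$ with the sign of $c\cos\theta$ positive. Then
$$\det M(\lambda)=g(\lambda)-4\lambda|\cos\theta|(\alpha_1^2-\alpha_2^2),$$
where $g$ is the quartic for $\theta=\pi/2$, $c=1$. At $\lambda=\lambda_0>0$ we have $g(\lambda_0)=0$. Since $\alpha_1>\alpha_2$, this gives $\det M(\lambda_0)<0$. As $\det M\to+\infty$, there is a root strictly larger than $\lambda_0$. Hence $w(T_\theta)\ge\lambda_\varphi(\theta)>\lambda_0=w(T_{\pm\pi/2})$.

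The main obstacle is the careful identification of $\lambda_\varphi$ with the largest root of the determinant. In particular one must rule out $M(\lambda)$ being indefinite beyond its last root. I would handle this by observing that for $\lambda$ above the largest root of $\det M$ both eigenvalues keep a fixed sign, and that sign is positive because $M\approx 4\lambda^2 I$ for large $\lambda$. Conversely, at a root $\det M=0$, so $M$ is singular and $\lambda_\varphi$ cannot lie strictly below the largest root. I would also check that $\lambda_0>0$, which holds since $4+\alpha_1^2+\alpha_2^2>0$.
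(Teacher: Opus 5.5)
Your approach is essentially the paper's. Your $\det M(\lambda)$ equals $16F_{\theta,\varphi}(\lambda)$, the characteristic polynomial of $\re(e^{i\varphi}T_\theta)$ that the paper computes by cofactor expansion. Your treatment of $\cos\theta\ne 0$ (evaluate at $\lambda_0$, choose $c=\pm1$ with $c\cos\theta>0$, get a negative value and hence a larger root) is exactly the paper's argument with $\mu\in\{0,\pi\}$. The only variation is at $\theta=\pm\pi/2$, where you find the maximizing $\varphi$ by monotonicity in $c^2$ rather than by nonnegativity of $T_{\pi/2}$.

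One step is not justified as written, and both lower bounds rest on it. You say that at a root $M$ is singular, so $\lambda_\varphi$ cannot lie strictly below the largest root $\lambda^*$ of $\det M$. This does not follow: a singular matrix may be positive semidefinite, and nothing you said excludes $M(\lambda)\ge 0$ for some $\lambda<\lambda^*$. You can repair it in either of two ways.

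\begin{enumerate}
\item Observe that $M(\lambda)/(4\lambda)=\lambda I-cA_\theta-D^2/(4\lambda)$ is strictly increasing in the Loewner order on $(0,\infty)$. Then $M(\lambda_1)\ge 0$ with $\lambda_1<\lambda^*$ would force $M(\lambda^*)>0$, contradicting singularity.
\item More directly, use the Schur-complement identity $\det M(\lambda)=16\det(\lambda I_4-\re(e^{i\varphi}T_\theta))$. This makes the largest root of $\det M$ literally $\lambda_{\max}(\re(e^{i\varphi}T_\theta))=\lambda_\varphi(\theta)$ and removes the need for any continuity argument.
\end{enumerate}
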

\begin{proof}
Observe that $T_\theta$ is unitarily similar to $T_{-\theta}$, so it suffices to consider $\theta\in [0,\pi]$.
For any $\mu\in [0,2\pi]$, $e^{i\mu}T_\theta$ is unitarily similar to
$\left(\begin{array}{cc}
e^{i\mu}A_\theta & D \\
0 & 0 \\
\end{array} \right)$.
Therefore,
\begin{eqnarray*}
&&\mbox{det}(\re(e^{i\mu}T_\theta)-\lambda I_4)=\mbox{det}\left(
                                                                  \begin{array}{cccc}
                                                                    \cos\theta\cos\mu-\lambda & \sin\theta\cos\mu & \frac{\alpha_1}{2} & 0 \\
                                                                    \sin\theta\cos\mu & -\cos\theta\cos\mu-\lambda & 0 & \frac{\alpha_2}{2} \\
                                                                    \frac{\alpha_1}{2} & 0 & -\lambda & 0 \\
                                                                    0 & \frac{\alpha_2}{2} & 0 & -\lambda \\
                                                                  \end{array}
                                                                \right)\\
&=&-\lambda \mbox{det}\left(
                        \begin{array}{ccc}
                          \cos\theta\cos\mu-\lambda& \sin\theta\cos\mu & \frac{\alpha_1}{2} \\
                           \sin\theta\cos\mu &  -\cos\theta\cos\mu-\lambda & 0 \\
                          \frac{\alpha_1}{2} & 0 & -\lambda  \\
                        \end{array}
                      \right)-\frac{\alpha_2^2}{4}\mbox{det}\left(
                                                              \begin{array}{cc}
                                                                \cos\theta\cos\mu-\lambda & \frac{\alpha_1}{2} \\
                                                                \frac{\alpha_1}{2} &  -\lambda \\
                                                              \end{array}
                                                            \right)\\
&=&\lambda^2\mbox{det}\left(
                        \begin{array}{cc}
                         \cos\theta\cos\mu-\lambda& \sin\theta\cos\mu \\
                       \sin\theta\cos\mu &  -\cos\theta\cos\mu-\lambda \\
                        \end{array}
                      \right)-\lambda\frac{\alpha_1^2}{4}(\cos\theta\cos\mu+\lambda)-\frac{\alpha_2^2}{4}(\lambda^2-\lambda \cos\theta\cos\mu-\frac{\alpha_1^2}{4})\\
&=&\lambda^4-\lambda^2(\cos^2\mu+\frac{\alpha_1^2+\alpha_2^2}{4})+\lambda\frac{\alpha_2^2-\alpha_1^2}{4}\cos\theta\cos\mu+\frac{\alpha_1^2\alpha_2^2}{16}\\
&\equiv&F_{\theta,\mu}(\lambda).
\end{eqnarray*}
Let $\lambda_1(F_{\theta,\mu})$ denote the largest real root of $F_{\theta,\mu}$. Then $w(T_\theta)=\max_{\mu\in [0,2\pi]}\lambda_1(F_{\theta,\mu})$.
For $\theta=\pi/2$, since $T_{\pi/2}$ is a nonnegative matrix, we get
\begin{eqnarray*}
\beta & \equiv & w(T_{\pi/2})=\lambda_1(F_{\pi/2,0})=\max\{\lambda :\lambda^4-(1+\frac{\alpha_1^2+\alpha_2^2}{4})\lambda^2+\frac{\alpha_1^2\alpha_2^2}{16}=0\}\\
& = & \frac{\sqrt{4+\alpha_1^2+\alpha_2^2+\sqrt{16+8\alpha_1^2+8\alpha_2^2+(\alpha_1^2-\alpha_2^2)^2}}}{2\sqrt{2}}.
\end{eqnarray*}
For $\theta\in [0,\pi]\setminus \{\pi/2\}$,
$$
F_{\theta,\mu}(\lambda)=F_{\pi/2,0}(\lambda)+\lambda^2\sin^2\mu+\lambda\frac{\alpha_2^2-\alpha_1^2}{4}\cos\theta\cos\mu.$$
Evaluating this at $\lambda=\beta$, we find
$$
F_{\theta,\mu}(\beta)=\beta^2\sin^2\mu-\frac{\alpha_1^2-\alpha_2^2}{4}\beta\cos\theta\cos\mu.
$$
Since $\alpha_1>\alpha_2\ge 0$, we have $F_{\theta, 0}(\beta)<0$ for all $\theta\in[0, \pi/2)$, and, $F_{\theta, \pi}(\beta)<0$ for all $\theta\in(\pi/2, \pi]$, that is, $ \lambda_1(F_{\theta, 0})>\beta$ for all $\theta\in[0, \pi/2)$, and, $ \lambda_1(F_{\theta, \pi})>\beta$ for all $\theta\in(\pi/2, \pi]$. Therefore,
$$w(T_\theta)=\max_{\mu\in [0,2\pi]}\lambda_1(F_{\theta,\mu})\ge \max\{\lambda_1(F_{\theta, 0}), \lambda_1(F_{\theta, \pi})\}>\beta$$
for all  $\theta\in [0,\pi]\setminus \{\pi/2\}$. This completes the proof.
\end{proof}

We are now ready to prove Theorem \ref{3.1}.

\begin{proof}[Proof of Theorem \ref{3.1}]
If $A=\pm I_k$, then $T$ is a quadratic operator and the assertion follows from \cite[Theorem 2.1]{TW}. Therefore, we may assume that $A$ is unitarily similar to $I_{k_1}\oplus (-I_{k_2})$, where $1\le k_1\le k-1$ and $k_2=k-k_1$. Let $m=n-k$, then $B$ is a $k\times m$ matrix. Say, $B=V_0S_B V_1^*$ is the singular value decomposition of $B$, where $V_0\in M_k$ and $V_1\in M_m$ are unitary, and $S_B=[\|B\|]\oplus B_1$ for some $B_1\in M_{k-1, m-1}$. Let $V=V_0\oplus V_1\in M_n$ be unitary, then $V^*TV=\begin{pmatrix} V_0^*AV_0 & S_B\\ 0 & 0\end{pmatrix}$, hence we may assume that $B=[\|B\|]\oplus B_1$ for some $B_1\in M_{k-1, m-1}$.

$(a)\Rightarrow (b)$. Assume that $W(T)$ is an elliptical disk. Since the geometric multiplicity and algebraic multiplicity of each eigenvalue of $T$ are equal, it follows from \cite[Corollary 4]{W} that $W(T)$ is not a circular disk. Therefore, the foci of $W(T)$ must be one of
$\{0,1\}$, $\{0,-1\}$, and $\{-1,1\}$. Moreover, since
$\im T=\left(
                          \begin{array}{cc}
                            0 & B/(2i) \\
                            -B^*/(2i) & 0 \\
                          \end{array}
                        \right)
$, it implies that $\max \sigma (\im T)=\|B\|/2=-\min \sigma (\im T)$, that is, the length of minor axis
of $W(T)$ is $\|B\|$, we infer that either $$W(T)=W(\left(
                                                                 \begin{array}{cc}
                                                                   \pm 1 & \|B\| \\
                                                                   0 & 0 \\
                                                                 \end{array}
                                                               \right)
)\ \ \ \mbox{ or } \ \ \ W(T)= W(\left(
                \begin{array}{cc}
                  1 & \|B\| \\
                  0 & -1 \\
                \end{array}
              \right)
).$$
Since $A^*=A$ and $A^2=I_k$, we may write $A=\left(
                                                     \begin{array}{cc}
                                                       \cos\theta & * \\
                                                       * & A_1 \\
                                                     \end{array}
                                                   \right)
$, where $A_1\in M_{k-1}$ is self-adjoint. By interlacing property, there exists a unitary matrix $U_1\in M_{k-1}$ such that $U_1^*A_1U_1=[\alpha]\oplus I_{k_1-1}\oplus (-I_{k_2-1})$, where $\alpha\in [-1, 1]$. Write $U_1^*B_1=[b_{ij}]\in M_{k-1, m-1}$.
Set $U_2=[1]\oplus U_1\in M_k$ and define $U=U_2\oplus I_m$, so that $U\in M_{n}$ is unitary.  Then $$
U^*TU=\left(
        \begin{array}{cc}
          U_2^*AU_2 & U_2^*B \\
          0 & 0 \\
        \end{array}
      \right)\equiv T',
$$
where
$$U_2^*AU_2=\left(\begin{array}{c|c} \begin{array}{cc} \cos\theta & a_{12}\\ a_{21} & \alpha \end{array} & 0 \\ \hline 0 & \begin{array}{cc} I_{k_1-1} & \\  & -I_{k_2-1}\end{array} \end{array}\right) \ \mbox{ and } \
U_2^*B=\left(\begin{array}{cc|cc} \|B\| & 0 &  \cdots & 0 \\ 0 & b_{11} &  \cdots & b_{1, m-1} \\ \hline \vdots & \vdots &  & \vdots\\ 0 & b_{k-1, 1} & \cdots & b_{k-1, m-1}\end{array}\right).$$
Note that the submatrix $\left({\textstyle \cos\theta \atop  \textstyle a_{21}} \ {\textstyle a_{12} \atop \textstyle \alpha}\right)$ is unitarily similar to $\diag(1, -1)$, thus $\alpha=-\cos\theta$ and $\overline{a}_{21}=a_{12}=e^{i\mu}\sin\theta$ for some real $\mu$, after a unitary similarity by a diagonal unitary matrix, we may assume that $a_{12}=a_{21}=\sin\theta$. On the other hand, the horizontal line $y=\|B\|/2$ is tangent to the elliptical disk $W(T)$ at the point $c+i\|B\|/2$, where $c\in\IR$ is the center of $W(T)$.
Define $x_1=[1\;0\ldots 0]^t\in \IC^k$, $x_2=[i\; 0\ldots 0]^t\in \IC^{m}$, and $x=[x_1\;x_2]^t/\sqrt{2}\in \IC^n$. Then $\|x\|=1$ and $\langle \im(T')x,x\rangle=\|B\|/2$,
so the real part of $\langle T'x,x\rangle$ gives the center of the elliptical disk $W(T)$, or $c=\langle \re(T')x,x\rangle$.
Compute
$$
c=\langle \re(T')x,x\rangle=\langle \left(
                                         \begin{array}{cc}
                                           \cos\theta & \frac{\|B\|}{2} \\
                                           \frac{\|B\|}{2} & 0 \\
                                         \end{array}
                                       \right)\left(
                                                \begin{array}{c}
                                                  \frac{1}{\sqrt{2}} \\
                                                  \frac{i}{\sqrt{2}} \\
                                                \end{array}
                                              \right),
\left(
                                                \begin{array}{c}
                                                  \frac{1}{\sqrt{2}} \\
                                                  \frac{i}{\sqrt{2}} \\
                                                \end{array}
                                              \right)
\rangle=\frac{\cos\theta}{2}.
$$
If $W(T)=W(\left(
             \begin{array}{cc}
               \pm 1 & \|B\| \\
               0 & 0 \\
             \end{array}
           \right)
)$, then $(\cos\theta)/2=c=\pm1/2$ and so, $\cos\theta=\pm 1$.
By interchanging the 2nd and $(k+1)$-th rows and columns of the matrix $T'$, we obtain
$\begin{pmatrix} \pm 1 & \|B\| \cr 0 & 0 \cr \end{pmatrix} \oplus T_2$ as asserted.

Now suppose that $W(T)=W(\left(
                             \begin{array}{cc}
                               1 & \|B\| \\
                               0 & -1 \\
                             \end{array}
                           \right)
)$, then $\cos\theta=2c=0$.
After a unitary similarity by a diagonal unitary matrix, we may assume that the $2\times 2$ leading principal submatrix of $U_2^*AU_2$ is $\left(                                                                                               \begin{array}{cc}                                                                                                   0 & 1 \\                                                                                                   1 & 0 \\                                                                                                 \end{array}                                                                                               \right)$ and $b_{1j}\ge 0$ for all $1\le j\le m-1$.
We now claim that $b_{1j}=0$ for all $1\le j\le m-1$.  Indeed, for each $j=1, \dots, m-1$, consider the $4\times 4$ principal submatrix
$$T'(\{1, 2, k+1, k+1+j\})=
\left(
  \begin{array}{cccc}
    0 & 1 & \|B\| & 0 \\
    1 & 0 & 0 & b_{1j} \\
    0 & 0 & 0 & 0 \\
    0 & 0 & 0 & 0 \\
  \end{array}
\right)\equiv  E_j.$$
We know that $W(E_j)\subseteq W(T')=W(\left(
                        \begin{array}{cc}
                          1 & \|B\| \\
                          0 & -1\\
                        \end{array}
                      \right)
)$, hence
$$w(E_j)\le w(\left(
                        \begin{array}{cc}
                          1 & \|B\| \\
                          0 & -1\\
                        \end{array}
                      \right))=\max \sigma (\left(
                                              \begin{array}{cc}
                                                1 & \frac{\|B\|}{2} \\
                                                \frac{\|B\|}{2} & -1 \\
                                              \end{array}
                                            \right)
                      )=\frac{\sqrt{4+\|B\|^2}}{2}.$$
On the other hand, Lemma \ref{p31} yields that
$$
w^2(E_j)=\frac{4+\|B\|^2+b_{1j}^2+\sqrt{16+8\|B\|^2+8b_{1j}^2+(\|B\|^2-b_{1j}^2)^2}}{8}.
$$
If $b_{1j}\neq 0$, then
$$
w^2(E_j)-\frac{4+\|B\|^2}{4}=\frac{1}{8}\Big(\sqrt{16+8\|B\|^2+8b_{1j}^2+(\|B\|^2-b_{1j}^2)^2}-(4+\|B\|^2-b_{1j}^2)\Big)>0,
$$
a contradiction. We conclude that $b_{1j}=0$ for all $j$.
By interchanging the 3rd and $(k+1)$-th rows and columns of $T'$, we obtain
$$
\left(
  \begin{array}{ccc}
    0 & 1 & \|B\| \\
    1 & 0 & 0 \\
    0 & 0 & 0 \\
  \end{array}
\right)\oplus T_2.
$$

$(b)\Rightarrow (a)$. Let $T_1=\left(
                    \begin{array}{ccc}
                      0 & 1 & \|B\| \\
                      1 & 0 & 0 \\
                      0 & 0 & 0 \\
                    \end{array}
                  \right)$.
We only need to show $W(T_1)$ is an elliptical disk. Since $T_1$ is unitarily similar to
$$
\left(
  \begin{array}{ccc}
    1 & 0 & \|B\|/\sqrt{2} \\
    0 & -1 & \|B\|/\sqrt{2} \\
    0 & 0 & 0 \\
  \end{array}
\right),
$$
by \cite[Theorem 2.2]{KRS}, it is easy to check that the numerical range of the above matrix is an elliptical disk.
This completes the proof.
\end{proof}

It is not hard to extend Theorem \ref{3.1} to compact operators in $B(H)$.
In fact,  one can extend the result to general operators.

\begin{theorem} \label{3.4}
 Suppose $T \in B(H\oplus H)$ has the form \eqref{T-form} such that
$A= A^*$ and $A^2 = I$.
Then  $\cl(W(T))$ is an  elliptical disk ${\mathcal E}$ with foci
in $\{0, -1,1\}$ if and only if  one of
the following holds.
\begin{itemize}
\item[{\rm (a)}] There is a unit vector $x\in H$ such that
$\|B^*x\| = \|B\|$ and $T$ is unitarily similar to $T_1 \oplus T_2$
      with $W(T_2) \subseteq W(T_1)$, where
$$
T_1 = \begin{pmatrix} a & \|B\|\cr 0 & 0\cr\end{pmatrix} \hbox{ with } a \in \{1, -1\} \quad
             \hbox{ or }   \quad
    T_1 = \begin{pmatrix} 0 & 1 & \|B\| \cr  1 & 0 &  0\cr  0 & 0 & 0\cr\end{pmatrix}.$$
\item[{\rm (b)}] There is no unit vectors $x\in H$ such that $\|B^*x\| = \|B\|$,  and there is a sequence of unit
vectors $\{x_m\}$ in $H$ such that one of the following holds.

       \smallskip
        \noindent
        {\rm (b.1)} $\begin{pmatrix} \la Ax_m,x_m\ra &  \|B^*x_m\|\cr  0 & 0 \end{pmatrix}
        \rightarrow  T_0 = \begin{pmatrix} a & \|B\| \cr 0 & 0 \cr \end{pmatrix}$, $a \in \{1, -1\}$,
        and $W(T_0) = \cl (W(T)) = \cE$.

        \smallskip
        \noindent
         {\rm (b.2)}
         $\begin{pmatrix} \la Ax_m,x_m\ra & \sqrt{1-\la Ax_m,x_m\ra^2} &  \|B^*x_m\|\cr   \sqrt{1-\la Ax_m,x_m\ra^2} & -\la Ax_m,x_m\ra & 0 \cr 0 & 0 & 0 \cr \end{pmatrix}
        \rightarrow  T_0 = \begin{pmatrix} 0 & 1 & \|B\| \cr 1 & 0 & 0 \cr 0 & 0 & 0 \cr\end{pmatrix}$
        and

\qquad        $W(T_0) = \cl (W(T)) = \cE$.
\end{itemize}
\end{theorem}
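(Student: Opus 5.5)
I would reduce everything to the finite-dimensional argument behind Theorem~\ref{3.1}, carried out with approximate rather than exact extremal vectors. Sufficiency is the easy half. In case (a), $W(T)=\conv(W(T_1)\cup W(T_2))=W(T_1)$, and $W(T_1)$ is a closed elliptical disk. For the $2\times2$ choice this is the standard fact about $2\times2$ matrices; for the $3\times3$ choice it is the computation via \cite[Theorem 2.2]{KRS} used in the proof of Theorem~\ref{3.1}. In case (b), the hypothesis already gives $\cl(W(T))=W(T_0)=\cE$. The foci lie in $\{0,\pm1\}$: they are $\{0,a\}$ for $T_0=\begin{pmatrix}a&\|B\|\\0&0\end{pmatrix}$ and $\{1,-1\}$ for the $3\times 3$ matrix, since it is unitarily similar to $\begin{pmatrix}1&0&\|B\|/\sqrt2\\0&-1&\|B\|/\sqrt2\\0&0&0\end{pmatrix}$.

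For necessity, assume $\cl(W(T))=\cE$ with foci in $\{0,\pm1\}$. First I would pin down $\cE$. The matrix $\im T=\begin{pmatrix}0&B/(2i)\\-B^*/(2i)&0\end{pmatrix}$ has $\sup\sigma(\im T)=\|B\|/2=-\inf\sigma(\im T)$, so the minor axis of $\cE$ has length $\|B\|$ and $\cE$ is symmetric about $\IR$. A circular disk is excluded here: a circle would need a single focus, which with foci in $\{0,\pm1\}$ forces $\|B\|=0$. So $\cE$ equals $W\begin{pmatrix}\pm1&\|B\|\\0&0\end{pmatrix}$ or $W\begin{pmatrix}1&\|B\|\\0&-1\end{pmatrix}$, and its center is $c\in\{\pm1/2,0\}$.

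Next, Theorem~\ref{p22} writes $W(T)$ as $\conv\bigcup_x W(T_x)$. The top point $c+i\|B\|/2$ of $\cE$ is approached by points of $W(T)$. Using the decomposition $z=\cos\theta\, x\oplus\sin\theta\, y$ from that proof, I would extract unit vectors $x_m$ with $\|B^*x_m\|\to\|B\|$ and $\re\la Ax_m,x_m\ra\to 2c$. Here $\la Ax_m,x_m\ra$ is real because $A=A^*$, and passing to a subsequence I would take it convergent to some $t$. Then I split on attainment. If some unit $x$ has $\|B^*x\|=\|B\|$, I rerun the proof of Theorem~\ref{3.1} with $x$ in place of the first basis vector. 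Since $A^2=I$ and $A=A^*$, the span of $x$ and $Ax$ reduces $A$, and on it $A$ acts as $\begin{pmatrix}\cos\theta&\sin\theta\\ \sin\theta&-\cos\theta\end{pmatrix}$ with $\cos\theta=\la Ax,x\ra=2c$. Likewise $B^*x/\|B\|$ splits off from the range side. If $c=\pm1/2$, then $Ax=\pm x$ and we obtain the direct summand $\begin{pmatrix}\pm1&\|B\|\\0&0\end{pmatrix}$. If $c=0$, Lemma~\ref{p31} applied to the compressions $E_j$ shows that $B^*(Ax)$ is orthogonal to $B^*x$, which gives the $3\times 3$ summand. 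The remaining inclusion $W(T_2)\subseteq W(T_1)$ is automatic, since $W(T)=\cE=W(T_1)$ in this case. If no such $x$ exists, the sequence $x_m$ yields (b.1) when $c=\pm1/2$, because $\la Ax_m,x_m\ra\to\pm1$. When $c=0$ it yields (b.2), because $\la Ax_m,x_m\ra\to0$ and the off-diagonal entry $\|Ax_m-\la Ax_m,x_m\ra x_m\|=\sqrt{1-\la Ax_m,x_m\ra^2}\to1$.

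The main obstacle is making the extraction rigorous and justifying the infinite-dimensional reductions. One point is that ``the extremal point is approached'' must force $\re\la Ax_m,x_m\ra\to 2c$ and not merely some convex combination. I would handle this by noting that each $W(T_x)$ is an ellipse of minor axis $\|B^*x\|\le\|B\|$ contained in $\cE$, so its top point can approach that of $\cE$ only if its center does. A second point is that the Lemma~\ref{p31} argument needs the orthogonality $\la B^*Ax,B^*y\ra=0$ for every $y\perp x$ in the relevant space. I would obtain this exactly as in finite dimensions by compressing to $4\times4$ blocks: any nonzero coupling increases $w$ beyond $w(\cE)$, contradicting $W(E)\subseteq\cl(W(T))$.
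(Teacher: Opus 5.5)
Your norm-attaining case follows the paper: you rerun the proof of Theorem~\ref{3.1} using compressions and Lemma~\ref{p31}. In the non-attaining case, your direct choice of $x_m$ with $\|B^*x_m\|\to\|B\|$ is a valid and more elementary substitute for the paper's Berberian extension, since (b.1) and (b.2) only involve $\la Ax_m,x_m\ra$ and $\|B^*x_m\|$. However, both halves rest on your identification of $\cE$, and that step has a genuine gap. The claim that a circle ``would need a single focus, which with foci in $\{0,\pm1\}$ forces $\|B\|=0$'' is false. A disk of radius $\|B\|/2$ centred at $0$, $1$ or $-1$ has its (double) focus in $\{0,\pm1\}$ for every value of $\|B\|$, and the paper's proof explicitly lists the pairs $(0,0),(1,1),(-1,-1)$ and rules them out.

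The centres $\pm1$ do fall to your own extraction, since $2c=\lim\la Ax_m,x_m\ra\in[-1,1]$. The disk $\{|z|\le\|B\|/2\}$ does not, because it is compatible with $\la Ax_m,x_m\ra\to0$. If it were $\cl(W(T))$, two things would break. Your comparison of $w(E_j)$ with $\sqrt{4+\|B\|^2}/2$ would use the wrong value of $w(\cE)$. And (b.2) would be false, because $W(T_0)\neq\cl(W(T))$. The missing idea is the paper's $3\times3$ compression. Take $x$ with $\la Ax,x\ra=0$ and $\|B^*x\|=\|B\|$, and compress $T$ to $\mathrm{span}\{x,Ax\}\oplus\mathrm{span}\{B^*x\}$. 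This gives $\begin{pmatrix}0&1&\|B\|\\1&0&0\\0&0&0\end{pmatrix}$, whose numerical radius $\sqrt{4+\|B\|^2}/2$ exceeds $\|B\|/2$. In your sequence version the same argument works: $\|B^*x_m\|\to\|B\|$ forces $(\|B\|^2I-BB^*)x_m\to0$, so the corresponding compressions converge to that matrix, and its numerical range lies in $\cl(W(T))$.

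A smaller point concerns the attained case with $c=0$. You say the $E_j$ compressions show that $B^*Ax$ is orthogonal to $B^*x$. That is automatic, since $\la B^*Ax,B^*x\ra=\|B\|^2\la Ax,x\ra=0$, and it is not what makes the $3\times3$ block split off. Because $T^*(Ax\oplus0)=x\oplus B^*Ax$, reducibility of $\mathrm{span}\{x,Ax\}\oplus\mathrm{span}\{B^*x\}$ requires $B^*Ax\in\mathrm{span}\{B^*x\}$, hence $B^*Ax=0$. Compress to $\mathrm{span}\{x,Ax\}\oplus\mathrm{span}\{B^*x/\|B\|,w\}$ with $w\perp B^*x$ a unit vector. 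The $(2,4)$ entry is $\la w,B^*Ax\ra$, and Lemma~\ref{p31} forces it to vanish, which is exactly what you need. Your final-paragraph formulation ($\la B^*Ax,B^*y\ra=0$ for $y\perp x$) is also enough. Together with the automatic relation, it makes $B^*Ax$ orthogonal to the range of $B^*$, which contains it.
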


\noindent
\it Proof. \rm
If $A=\pm I$, then $T$ is a quadratic operator and the assertion follows from \cite[Theorem 2.1]{TW}. Therefore, we may assume that $A\neq\pm I$.
The sufficiency proof is clear.
For the necessity, assume that $\cl (W(T))$ is an elliptical disk, where the minor axis
has end points $c+ib/2$, where $b = \|B\|$ and $c \in \{0, \pm 1/2, \pm 1\}$
because the foci are $(0,0), (-1,-1), (1,1), (0,1), (-1,0), (-1,1)$.
Suppose there is a unit vector $x\in H$
such that $\|B^*x\| = \|B\|$.  Consider the $2\times 2$ matrix $T_x=\begin{pmatrix} \langle Ax, x\rangle & \|B^*x\| \\ 0 & 0\end{pmatrix}$. Since $W(T_x)\subseteq W(T)$ and their minor axis have the same length, hence $c=\langle Ax, x\rangle/2$, that is, $|c|\le 1/2$.
It follows that
$c \in\{ 0, \pm 1/2\}$, and hence the foci can only be
$(0,0), (0,1), (-1,0), (-1,1)$.
If the foci are $(0,0)$, then $\cl (W(T))$ is the circular disk centered at 0 with radius
$\|B\|/2$ and thus $\langle Ax, x\rangle=2c=0$.
It implies that $x=(1/\sqrt{2})(u_1+u_2)$, where $u_1\in \ker(A-I)$ and $u_2\in\ker (A+I)$ are unit vectors.
Let $y=(1/\sqrt{2})(u_1-u_2)$,
$\widetilde{x}=x\oplus 0$,
$\widetilde{y}=y\oplus 0$,
and $\widetilde{z}=0\oplus (B^*x/\|B\|)$.
Then $\{\widetilde{x}, \widetilde{y}, \widetilde{z}\}$ is orthonormal and
$$\widetilde{T} \equiv  T|_{\{\widetilde{x}, \widetilde{y}, \widetilde{z}\}} =\begin{pmatrix}
\langle T\widetilde{x}, \widetilde{x}\rangle & \langle T\widetilde{y}, \widetilde{x}\rangle & \langle T\widetilde{z}, \widetilde{x}\rangle\cr
\langle T\widetilde{x}, \widetilde{y}\rangle & \langle T\widetilde{y}, \widetilde{y}\rangle & \langle T\widetilde{z}, \widetilde{y}\rangle\cr
\langle T\widetilde{x}, \widetilde{z}\rangle & \langle T\widetilde{y}, \widetilde{z}\rangle & \langle T\widetilde{z}, \widetilde{z}\rangle\cr
\end{pmatrix}= \begin{pmatrix} 0 & 1 & \|B\|\cr 1 & 0 & 0 \cr 0 & 0 & 0 \cr\end{pmatrix}$$
such that $W(\widetilde{T})$ is not a subset of the circular disk centered at 0 with radius
$\|B\|/2$. Thus, the foci can only be $(0,1), (-1,0), (-1,1)$. By the arguments in the proof of
Theorem \ref{3.1}, we see that $T$ is a unitarily similar to $T_1 \oplus T_2$ as asserted.

Suppose there is no unit vector $x$ such that $\|B^*x\| = \|B\|$.
We can use the Berberian trick in \cite{B} to identify sequences of unit vectors
$\{x_m\}$ in $H$ as an element in
the sequence space. Then identify two sequences $\{x_n\}$ and $\{y_n\}$ if
they differ by a null sequence. We can then use the equivalence classes
in the sequence space to form a Hilbert space
$K$ containing $H$ as the subspace of constant sequences.
Then extend $T$ to an operator
$\tilde T = \begin{pmatrix} \tilde A & \tilde B \cr 0 & 0 \cr\end{pmatrix} \in B(K\oplus K)$,
where $\tilde A, \tilde B\in B(K)$ are the extensions of $A, B\in B(H)$,
i.e., for any sequence $\{x_m\} \in K$, $\tilde A \{x_m\} = \{A x_m\}$ and
$\tilde B \{x_m\} = \{Bx_m\}$. Now, $\tilde B$ will be norm attaining.
By (a),
$\tilde T$ is a direct sum to $\tilde T_0$ and $\tilde T_2$.
Evidently, the norm attaining vector $\{x_m\}$ in $K$ corresponds to
a sequence in $H$ satisfying (b).
\qed

\medskip
\begin{remark}
The necessity of Theorem \ref{3.4} was proved under the assumption that $\cl (W(T))$ is an elliptical disk with foci in $\{0,1,-1\}$.
We conjecture that this assumption is automatic.
\end{remark}

\begin{conjecture}
Let $T\in B(H\oplus H)$ be of the form
\[
T=\begin{pmatrix}A&B\\0&0\end{pmatrix},
\]
where $A=A^*$ and $A^2=I$. If $\cl (W(T))$ is an elliptical disk,
then its foci are contained in $\{0,1,-1\}$.
\end{conjecture}

We are able to prove the conjecture in the case where one of the foci is
$0$; in this case, the other focus must be either $1$ or $-1$.
The general case remains open.

\medskip
 Next we determine the minimum of the numerical radius of $T$ with $A$ such that $A=A^*$ and $A^2=I$
 for a given $B$.  We assume that $A \ne \pm I$ to avoid trivial consideration.
Therefore, we assume that up to unitary similarity, $A = I_p \oplus -I_q$ with
 $p \ge q \ge 1$.  Among other things, suppose $A\in M_k$ and $B\in M_{k, m}$. If $k<m$, by singular value decomposition, we may assume that $B=[B_1 \ 0_{k\times (m-k)}]$ where $B_1\in M_k$, thus $T$ is unitarily similar to $\begin{pmatrix} U^*AU & B_1 \\ 0_k & 0_k\end{pmatrix}\oplus 0_{m-k}$ for some unitary matrix $U\in M_k$. In this case, we have $w(T)=w\Big(\begin{pmatrix} U^*AU & B_1 \\ 0_k & 0_k\end{pmatrix}\Big)$. On the other hand, if $k>m$, let $B'=[B \ 0_{k\times (k-m)}]\in M_k$, then the matrix $T'\equiv\begin{pmatrix} A & B' \\ 0_k & 0_k\end{pmatrix}=T\oplus 0_{k-m}$, and thus $w(T)=w(T')$. Therefore, we only need to consider that both $A$ and $B$ are $k\times k$ matrices.

 For convenience, if $B\in M_k$ has singular values $s_1\ge s_2\ge\cdots\ge s_k\ge 0$, let
\begin{equation}\label{eq1}
S_B=\begin{pmatrix} s_1 & & \\ & \ddots & \\ & & s_k\end{pmatrix}\in M_k \ \ \ \mbox{and} \ \ \ J_k=\begin{pmatrix} & & 1\\ & \rddots & \\ 1 & & \end{pmatrix}\in M_k.
\end{equation}
Among other things, if $A=[a_{ij}]\in M_k$ and $B=[b_{ij}]\in M_k$ satisfy $0\le a_{ij}\le b_{ij}$ for all $1\le i,j\le k$, we write $A\preceq B$. In this case, we have $w(A)\le w(B)$, because a nonnegative matrix attains its numerical radius at a nonnegative unit vector.

Here is our main theorem.

\begin{theorem}\label{t36}
Let $1\le q\le p$, $k=p+q$, $A=I_p\oplus(-I_q)\in M_k$, $B\in M_k$, and
$$\alpha=\min\left\{w\Big(\begin{pmatrix} U^*AU & B\\ 0_k & 0_k\end{pmatrix}\Big) : U\in M_k \mbox{ is unitary}\right\}.$$
Let  $\tilde{ A} = I_p \oplus (-I_p)\in M_{2p}$, $\tilde{B} = B \oplus s_{q+1}I_{p-q}\in M_{2p}$, and
$$\beta=\min\left\{w\Big(\begin{pmatrix} V^* \tilde{A}V & \tilde{B}\\ 0_{2p} & 0_{2p}\end{pmatrix}\Big) : V\in M_{2p} \mbox{ is unitary}\right\}.$$
Then
$$
\alpha=w\Big(\left(\begin{array}{c|c}\begin{array}{ccc}  & & J_q\\ & I_{p-q} & \\ J_q & & \end{array} & S_B
\\ \hline 0_k & 0_k\end{array}\right)\Big)
= w\Big(\begin{pmatrix}
J_{2p} & S_{\tilde{B}} \\ 0_{2p} & 0_{2p}
\end{pmatrix}\Big)=\beta,
$$
where $J_q$, $J_{2p}$, $S_B$, and $S_{\tilde{B}}$ are as in \eqref{eq1}.
\end{theorem}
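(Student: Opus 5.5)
The plan has three parts: reduce the problem to $B=S_B$ with $U$ free, prove a lower bound for every $U$, and then show that the displayed matrix attains it.

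\emph{Reduction.} Write $B=V_0S_BV_1^*$. Conjugating by $V_0\oplus V_1$ replaces $U$ by $V_0^*U$ and $B$ by $S_B$. So $\alpha$ is the minimum of $w(T_U)$, where $T_U=\begin{pmatrix} U^*AU & S_B\\ 0&0\end{pmatrix}$ and $U^*AU$ ranges over all Hermitian involutions with $p$ eigenvalues $1$ and $q$ eigenvalues $-1$.

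\emph{Lower bound.} I would use $w(T_U)\ge\max\{\lambda_{\max}(\re T_U),\lambda_{\max}(-\re T_U)\}$, which is the case $\theta\in\{0,\pi\}$ of Theorem~\ref{support-line}. More generally one can use $\lambda_{\max}$ of $\re(e^{i\theta}T_U)$. The tool is the criterion in Theorem~\ref{support-line}: $\lambda\ge\lambda_\theta$ if and only if $4\lambda(\lambda I-\cos\theta\, U^*AU)\ge S_B^2$.

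\begin{itemize}
\item[(i)] For $\theta=0$, this forces $U^*AU\le \lambda I - S_B^2/(4\lambda)$.
\item[(ii)] For $\theta=\pi$, it forces $-U^*AU\le \lambda I-S_B^2/(4\lambda)$.
\item[(iii)] $U^*AU$ has $p$ eigenvalues equal to $1$ on a $p$-dimensional subspace, and $-U^*AU$ has $q$ eigenvalues equal to $1$.
\item[(iv)] By Courant--Fischer, (i) and (iii) give $\lambda-s_j^2/(4\lambda)\ge 1$ for the relevant indices. Since the $1$-eigenspace has dimension $p$, it meets $\mathrm{span}(e_p,\dots,e_k)$, so (i) forces $\lambda - s_p^2/(4\lambda)\ge 1$. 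Similarly, (ii) forces $\lambda-s_q^2/(4\lambda)\ge1$.
\end{itemize}

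These crude bounds are not sharp, because $\theta=0,\pi$ alone do not capture the off-diagonal pairing. The sharper idea is a pairing argument: for each $j\le q$, restrict to the subspace spanned by $e_j$ and $e_{k+1-j}$ together with the corresponding $B$-columns. Each resulting block is of the type in Lemma~\ref{p31}, with $\alpha_1=s_j$ and $\alpha_2=s_{k+1-j}$. The lemma says that among the $2\times 2$ compressions, the antidiagonal involution ($\theta=\pi/2$) minimizes $w$. The middle indices $q<j\le p-q$ are paired with $I$, giving $T_j=\begin{pmatrix}1&s_j\\0&0\end{pmatrix}$.

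So the target lower bound for an arbitrary $U$ is $w(T_U)\ge\max\big(\max_{j\le q}\beta(s_j,s_{k+1-j}),\ \max_{q<j\le p}w(T_j)\big)$, where $\beta$ is the closed form of Lemma~\ref{p31}. I expect proving this inequality for an arbitrary $U$ to be the main obstacle.

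What I would try first is an interlacing or minimax argument applied to $\re(e^{i\mu}T_U)$, combined with the determinant computation of Lemma~\ref{p31}. Concretely, one needs a $2$-dimensional subspace (or a $1$-dimensional one, for the middle indices) on which the compression of $U^*AU$ is an involution-type $A_\theta$ and the compression of $S_B$ dominates $\diag(s_j,s_{k+1-j})$ entrywise. Lemma~\ref{p31} then bounds the compression from below, and monotonicity under $\preceq$ or Corollary~\ref{2.4} is used to compare it with the paired block. Finding such a subspace for an arbitrary $U$ is where the dimension counts $p\ge q$ have to enter.

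\emph{Attainment.} With $U^*AU=\begin{pmatrix}&&J_q\\&I_{p-q}&\\J_q&&\end{pmatrix}$, the matrix $T$ is permutation-similar to a direct sum. The summands are $2\times2$-block pieces $\begin{pmatrix}A_{\pi/2}&\diag(s_j,s_{k+1-j})\\0&0\end{pmatrix}$ for $j\le q$, and pieces $T_j$ for $q<j\le p$. By the first Proposition of Section~2, $w$ of the direct sum is the maximum of $w$ over the summands, which equals the lower bound; this gives the first equality.

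For the second equality, apply the same formula to $\tilde A,\tilde B$. Here $q$ is replaced by $p$, and the pairs are $(s_j,s_{2p+1-j})$, where the singular values of $\tilde B$ are $s_1,\dots,s_q$, then $p-q$ copies of $s_{q+1}$, then $s_{q+1},\dots$. For $q<j\le p$, the new paired blocks have the form $(s_j,s')$ with $s'\le s_{q+1}$. I would compare them with $T_j$, using $\beta(s,s')\le\beta(s,s)$ and the explicit formulas to check that each new block's $w$ does not exceed the previous maximum. The remaining pairs coincide with the old ones, so the maxima agree and $\alpha=\beta$.
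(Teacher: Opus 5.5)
\textbf{The decisive step is missing.} Apart from attainment, which is a routine permutation-similarity computation, the whole content of the $\alpha$-part is the inequality $w(T_U)\ge\max\{\max_{j\le q}w(P_j),\,(1+\sqrt{1+s_{q+1}^2})/2\}$ for \emph{every} unitary $U$. Here $P_j=\begin{pmatrix}A_{\pi/2}&\diag(s_j,s_{k+1-j})\\0&0\end{pmatrix}$. You leave exactly this inequality open.

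The idea you need is this. For $M=U^*AU$ and a unit vector $v$, $\la Mv,v\ra=\pm1$ forces $Mv=\pm v$, since $\|M\|=1$. So $\pm1$-eigenvectors of compressions of $M$ are eigenvectors of $M$, and interlacing produces reducing planes on which Lemma~\ref{p31} applies.

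The paper does this in Lemma~\ref{l35}:
\begin{enumerate}
\item The compression of $M$ to $e_1^\perp$ has eigenvalues $t_1$, $1$ ($p-1$ times) and $-1$ ($q-1$ times). Hence $\mathrm{span}\{e_1,v\}$, with $v$ the $t_1$-eigenvector, reduces $M$.
\item The matching compression of $S_B$ is $\diag(s_1,b_{11})$ with $b_{11}\ge s_k$. Lemma~\ref{p31} with Corollary~\ref{2.4} then gives $w(T_U)\ge w(P_1)$.
\item A second interlacing on $e_k^\perp$ yields a $(p-1,q-1)$ instance whose $B$-block dominates $\diag(s_2,\dots,s_{k-1})$, and one inducts.
\end{enumerate}

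Your own plan can also be finished directly, without induction. Write $E_\pm=\ker(M\mp I)$, and for $j\le q$ let $F=\mathrm{span}(e_1,\dots,e_j)\subseteq G=\mathrm{span}(e_1,\dots,e_{k+1-j})$.
\begin{enumerate}
\item Since $\dim F=j>j-1=\dim G^\perp$, some $0\ne f\in F$ has $Mf\in G$. Then $f_\pm=(f\pm Mf)/2\in E_\pm\cap G$. If $f_+$ or $f_-$ vanishes, replace it by a nonzero vector of $E_+\cap G$ or $E_-\cap G$ respectively; these have dimensions $\ge p+1-j$ and $\ge q+1-j$.
\item Then $\mathcal S=\mathrm{span}\{f_+,f_-\}\subseteq G$ is a reducing plane for $M$, with eigenvalues $\pm1$, that contains $f\in F$.
\item Hence the compression of $S_B$ to $\mathcal S$ (same isometry in both block coordinates) has eigenvalues $\mu_1\ge s_j$ and $\mu_2\ge s_{k+1-j}$.
\item Diagonalize this compression inside $\mathcal S$; note that entrywise domination is not the right test for a non-diagonal compression. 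Bring $M|_{\mathcal S}$ to some $A_\theta$ by a diagonal unitary. Then apply Lemma~\ref{p31} (trivial if $\mu_1=\mu_2$) and Corollary~\ref{2.4}.
\end{enumerate}
As written, the proposal contains neither argument, so $\alpha$ is not established.

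\textbf{Step (iv) is false, not merely crude.} On $\mathrm{span}(e_p,\dots,e_k)$ you only get the upper bound $\la S_B^2u,u\ra\le s_p^2$, which yields no lower bound on $\lambda$. Concretely, (iv) would give $\lambda_0,\lambda_\pi\ge(1+\sqrt{1+s_1^2})/2$ when $p=q=1$, where $\lambda_\theta=\lambda_{\max}(\re(e^{i\theta}T_U))$. But for $S_B=\diag(1,0)$ and $M=J_2$ one has $\lambda_0=\lambda_\pi=\sqrt5/2<(1+\sqrt2)/2$. The fix is to choose a unit $u\in E_+\cap\mathrm{span}(e_1,\dots,e_{q+1})$ (dimension $\ge1$) in the $\theta=0$ criterion. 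This gives exactly the middle term $(1+\sqrt{1+s_{q+1}^2})/2$.

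\textbf{The $\beta$-step has two problems.} First, the entries of $S_{\tilde B}$ in positions $q+1,\dots,p$ all equal $s_{q+1}$. So for $q<j\le p$ the new pairs are $(s_{q+1},s_{p+q+1-j})$, not $(s_j,s')$. Second, showing that no new block exceeds the old maximum only gives $\beta\le\alpha$. For $\beta\ge\alpha$ you need two facts:
\begin{enumerate}
\item the pair $(s_{q+1},s_{q+1})$ actually occurs (at $j=p$);
\item $\begin{pmatrix}A_{\pi/2}&s_{q+1}I_2\\0&0\end{pmatrix}$ is unitarily similar to $\begin{pmatrix}1&s_{q+1}\\0&0\end{pmatrix}\oplus\begin{pmatrix}-1&s_{q+1}\\0&0\end{pmatrix}$.
\end{enumerate}
Together these show that block's numerical radius equals the middle contribution $(1+\sqrt{1+s_{q+1}^2})/2$ to $\alpha$.
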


We need the following lemma to prove Theorem \ref{t36}.

\begin{lemma}\label{l35}
Let $1\le q\le p$, $k=p+q$, $B=\diag(s_1, \dots, s_k)\in M_k$ with $s_1\ge s_2\ge \cdots\ge s_k\ge 0$ and $B_1=\diag(s_{2}, \dots, s_{k-1})\in M_{k-2}$. Then, for any unitary $U\in M_k$, we have
$$
 w\Big(\begin{pmatrix} U^*(I_p\oplus (-I_q))U & B\\ 0_k & 0_k\end{pmatrix}\Big)
\ge   w\Big(\left(\begin{array}{c|c}\begin{array}{cc} 0 & 1\\ 1 & 0\end{array} & \begin{array}{cc} s_1 & 0\\ 0 & s_k\end{array}
\\ \hline 0_2 & 0_2\end{array}\right)\oplus
\begin{pmatrix} {U'}^*(I_{p-1}\oplus (-I_{q-1}))U' & B_1\\ 0_{k-2} & 0_{k-2}\end{pmatrix}\Big) $$
for some unitary matrix $U'\in M_{k-2}$.
\end{lemma}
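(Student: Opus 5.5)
The plan is to obtain both summands on the right-hand side as lower bounds coming from compressions of $T_U=\begin{pmatrix} U^*AU & B\\ 0_k & 0_k\end{pmatrix}$, with $A=I_p\oplus(-I_q)$. Since $w(X\oplus Y)=\max\{w(X),w(Y)\}$, it suffices to prove separately that $w(T_U)$ dominates each summand. Write $H_U=U^*AU$. This is a Hermitian involution with $p$ eigenvalues $1$ and $q$ eigenvalues $-1$. The key observation is that $B=\diag(s_1,\dots,s_k)$ is diagonal in the standard basis. Hence, for any set $\mathcal F$ of coordinates, the compression of $T_U$ to $\mathrm{span}\{e_j: j\in\mathcal F\}\oplus \mathrm{span}\{e_j:j\in\mathcal F\}$ equals $\begin{pmatrix} H_U(\mathcal F) & B(\mathcal F)\\ 0&0\end{pmatrix}$, where $B(\mathcal F)$ is again diagonal with entries $s_j$, $j\in\mathcal F$. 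Since numerical ranges of compressions are contained in $W(T_U)$, we get $w(T_U)\ge w$ of each such compression.

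\emph{First summand.} Take $\mathcal F=\{1,k\}$. Then $C=H_U(\{1,k\})$ is a $2\times 2$ Hermitian contraction. After a diagonal unitary similarity (which commutes with $\diag(s_1,s_k)$, so it does not change $w$), we may take its off-diagonal entry $t\ge 0$. The claim to prove is then
\[
w\begin{pmatrix} C & \diag(s_1,s_k)\\ 0&0\end{pmatrix}\ \ge\ w\begin{pmatrix} \begin{pmatrix}0&1\\1&0\end{pmatrix} & \diag(s_1,s_k)\\ 0&0\end{pmatrix}.
\]
The matrices $C$ range over a convex set. The map $C\mapsto w(\cdot)$ is convex, and it is invariant under $C\mapsto -C^t$-type symmetries (conjugation by $\diag(1,-1)$ together with multiplication of $T$ by $-1$ and diagonal rescaling). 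For $C=A_\theta$, Lemma \ref{p31} already shows that the minimum over $\theta$ is attained at $\theta=\pm\pi/2$. To pass from involutions to contractions, I would use the monotonicity $\preceq$ for nonnegative matrices, or directly Theorem \ref{support-line} with $\mu=0,\pi$. The goal is to show that shrinking the diagonal of $C$ toward $0$ and increasing $t$ toward $1$ can only decrease $w$, using the explicit quartic $F_{\theta,\mu}$ computed in the proof of Lemma \ref{p31}. If this direct route is awkward, I would instead pick a better $2$-dimensional subspace than $\mathrm{span}\{e_1,e_k\}$: one on which $H_U$ compresses exactly to an involution, with $B^*$-norms at least $s_1$ and $s_k$. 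Corollary \ref{2.4} then finishes this step.

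\emph{Second summand.} Take $\mathcal F=\{2,\dots,k-1\}$. The compression $C'=H_U(\mathcal F)$ is a Hermitian contraction whose eigenvalues, by Cauchy interlacing (deleting two rows and columns), include at least $p-2$ eigenvalues equal to $1$ and $q-2$ equal to $-1$, with two remaining eigenvalues in $[-1,1]$. I need to show that $w\begin{pmatrix} C'&B_1\\0&0\end{pmatrix}\ge w\begin{pmatrix}U'^*(I_{p-1}\oplus -I_{q-1})U'&B_1\\0&0\end{pmatrix}$ for a suitable unitary $U'$. The natural plan is to split off the two ``free'' eigenvalues and replace that $2$-dimensional part of $C'$ by an involution with one eigenvalue $1$ and one eigenvalue $-1$, without increasing $w$. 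This would be argued as in the first step, via convexity and the $2\times 2$ comparison from Lemma \ref{p31}.

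The hardest part is this replacement of contractions by involutions. A convex function need not be minimized at extreme points, so the argument must exploit structure. The plan is to use the sign-symmetry $D C D$, with $D$ diagonal $\pm1$ commuting with $B$, which leaves $w$ unchanged, to average away the unfavorable parts. Then I would compare the largest root of the characteristic polynomial of $\re(e^{i\mu}T)$ at $\mu\in\{0,\pi\}$, as in the proof of Lemma \ref{p31}.
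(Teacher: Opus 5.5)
There is a genuine gap, and it is the step you flagged as hardest. Replacing a Hermitian contraction by an involution ``without increasing $w$'' is not merely awkward; it is false. So no argument built on the coordinate compressions $\mathcal F=\{1,k\}$ and $\mathcal F=\{2,\dots,k-1\}$ can succeed.

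Take $p=q=2$, and let $H_U$ be the permutation matrix interchanging $e_1\leftrightarrow e_2$ and $e_3\leftrightarrow e_4$. It is a Hermitian involution with eigenvalues $1,1,-1,-1$, and $H_U(\{1,4\})=H_U(\{2,3\})=0_2$. Your two compressions are then $\begin{pmatrix}0_2&\diag(s_1,s_4)\\0_2&0_2\end{pmatrix}$ and $\begin{pmatrix}0_2&\diag(s_2,s_3)\\0_2&0_2\end{pmatrix}$, with numerical radii $s_1/2$ and $s_2/2$. Each target matrix, however, has a principal submatrix unitarily similar to $\diag(1,-1)$, so its numerical radius is at least $1$. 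For example, with all $s_j=1$ you get $1/2$ against $(1+\sqrt2)/2$ for the first target. The lemma still holds for this $U$. The problem is that by insisting the $B$-block stay diagonal, you let the $(1,1)$-block degenerate, and that loss cannot be recovered by convexity, sign symmetries, or the quartic $F_{\theta,\mu}$.

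The missing idea is the opposite trade-off. Compress to non-coordinate subspaces on which $H_U$ is \emph{exactly} an involution, and push the loss into the $B$-block, where interlacing for $B$ together with Corollary~\ref{2.4} controls it.

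\emph{First summand.} By Cauchy interlacing, the compression of $H_U$ to $e_1^\perp$ has eigenvalues $1$ ($p-1$ times), $-1$ ($q-1$ times), and one $t_1\in[-1,1]$. Since $H_U$ is unitary, a unit vector $u\in e_1^\perp$ on which the compression acts as $\pm1$ satisfies $\|H_Uu\|=\|P_{e_1^\perp}H_Uu\|$, so it is a genuine eigenvector of $H_U$. Hence, with $v$ a unit $t_1$-eigenvector, $V=\mathrm{span}\{e_1,v\}$ reduces $H_U$ to an involution with eigenvalues $\pm1$. In the basis $\{e_1,v\}$ the $B$-block is $\diag(s_1,\la Bv,v\ra)$, which is diagonal because $Be_1=s_1e_1$, and $\la Bv,v\ra\ge s_k$. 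Lemma~\ref{p31} and Corollary~\ref{2.4} then give the first bound.

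Your fallback points in this direction, but lower bounds on the two ``$B^*$-norms'' are not enough. Lemma~\ref{p31} needs the $B$-block to be diagonal, and Corollary~\ref{2.4} needs an operator inequality, both in the basis where the $H$-block is $A_\theta$. Putting $e_1\in V$ is what delivers this.

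\emph{Second summand.} Do the same with the compression of $H_U$ to $\mathrm{span}\{e_1,\dots,e_{k-1}\}$. Its $\pm1$-eigenvectors span a $(k-2)$-dimensional subspace $M$ on which $H_U$ is exactly $I_{p-1}\oplus(-I_{q-1})$. The $B$-block there is a compression of $\diag(s_1,\dots,s_{k-1})$, so its eigenvalues satisfy $d_j\ge s_{j+1}$. Diagonalizing this block produces $U'$, and Corollary~\ref{2.4} lowers $D=\diag(d_1,\dots,d_{k-2})$ to $B_1$. Working inside $e_1^\perp$ instead would only give $d_j\ge s_{j+2}$, which is too weak.
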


\begin{proof}
We first show that
$$w\Big(\begin{pmatrix} U^*(I_p\oplus(-I_q))U & B\\ 0_k & 0_k\end{pmatrix}\Big) \ge  w\Big(\left(\begin{array}{c|c}\begin{array}{cc} 0 & 1\\ 1 & 0\end{array} & \begin{array}{cc} s_1 & 0\\ 0 & s_k\end{array}
\\ \hline 0_2 & 0_2\end{array}\right)\Big).$$
Indeed, let $U^*(I_p\oplus(-I_q))U=[a_{ij}]_{i,j=1}^k$ and $A_1=[a_{ij}]_{i,j=2}^k\in M_{k-1}$ be the principal submatrix obtained by deleting the first row and first column of $U^*(I_p\oplus(-I_q))U$. By the interlacing property, there exists a unitary matrix $U_1\in M_{k-1}$ such that $U_1^*A_1U_1=[t_1]\oplus I_{p-1}\oplus (-I_{q-1})$, where $t_1\in [-1, 1]$. Write $U_1^*\diag(s_2, \dots, s_k)U_1=[b_{ij}]_{i,j=1}^{k-1}$, we have $b_{11}\ge s_k$ from the interlacing property. Let $W_1=[1]\oplus U_1\in M_k$ be unitary, then
$$W_1^*(U^*(I_p\oplus(-I_q))U)W_1=\left(\begin{array}{c|c} \begin{array}{cc} a_{11} & a_{12}'\\ a_{21}' & t_1 \end{array} & 0 \\ \hline 0 & \begin{array}{cc} I_{p-1} & \\  & -I_{q-1}\end{array} \end{array}\right)$$
and
$$W_1^*BW_1=\left(\begin{array}{cc|cc} s_1 & 0 &  \cdots & 0 \\ 0 & b_{11} &  \cdots & b_{1, k-1} \\ \hline \vdots & \vdots &  & \vdots\\ 0 & b_{k-1, 1} & \cdots & b_{k-1, k-1}\end{array}\right).$$
Note that the matrix $\left({\textstyle a_{11}\atop  \textstyle a_{21}'} \ {\textstyle a_{12}' \atop \textstyle t_1}\right)$ is unitarily similar to $\diag(1, -1)$ and $b_{11}\ge s_k$, by Lemma \ref{p31} and Corollary \ref{2.4}, we obtain
\begin{eqnarray*}
&& w\Big(\begin{pmatrix} U^*(I_p\oplus(-I_q))U & B\\ 0_k & 0_k\end{pmatrix}\Big)
 \ge    w\Big(\left(\begin{array}{c|c}\begin{array}{cc} a_{11} & a_{12}'  \\ a_{21}' & t_1 \end{array} & \begin{array}{cc} s_1 & 0 \\ 0 & b_{11} \end{array} \\ \hline 0_2 & 0_2\end{array}\right)\Big) \\
&\ge& w\Big(\left(\begin{array}{c|c}\begin{array}{cc} 0 & 1\\ 1 & 0\end{array} & \begin{array}{cc} s_1 & 0\\ 0 & b_{11}\end{array}
\\ \hline 0_2 & 0_2\end{array}\right)\Big)
 \ge  w\Big(\left(\begin{array}{c|c}\begin{array}{cc} 0 & 1\\ 1 & 0\end{array} & \begin{array}{cc} s_1 & 0\\ 0 & s_k\end{array}
\\ \hline 0_2 & 0_2\end{array}\right)\Big)
\end{eqnarray*}
as asserted.

We next show that
$$w\Big(\begin{pmatrix} U^*(I_p\oplus(-I_q))U & B\\ 0_k & 0_k\end{pmatrix}\Big) \ge  w\Big(\begin{pmatrix} {U'}^*(I_{p-1}\oplus (-I_{q-1}))U' & B_1\\ 0_{k-2} & 0_{k-2}\end{pmatrix}\Big)$$
for some unitary matrix $U'\in M_{k-2}$. Indeed, let $A_2=[a_{ij}]_{i,j=1}^{k-1}\in M_{k-1}$ be the leading principal submatrix of $U^*(I_p\oplus(-I_q))U$. By the interlacing property, there exists a unitary matrix $U_2\in M_{k-1}$ such that $U_2^*A_2U_2=[t_2]\oplus I_{p-1}\oplus (-I_{q-1})$, where $t_2\in [-1, 1]$. Write $U_2^*\diag(s_1, \dots, s_{k-1})U_2=[c_{ij}]_{i,j=1}^{k-1}$ and let $C=[c_{ij}]_{i,j=2}^{k-1}\in M_{k-2}$ be the principal submatrix obtained by deleting the first row and first column of $U_2^*\diag(s_1, \dots, s_{k-1})U_2$. Let $W_2=U_2\oplus[1]\in M_k$ be unitary. Then
$$W_2^*(U^*(I_p\oplus(-I_q))U)W_2=\left(\begin{array}{c|c|c}
t_2 & 0 & a_{1k}'' \\ \hline 0 & \begin{array}{cc} I_{p-1} &  \\ & -I_{q-1} \end{array} & 0 \\ \hline a_{k1}'' & 0 & a_{kk}\end{array}\right)$$
and
$$W_2^*BW_2=\left(\begin{array}{c|c|c}
c_{11} & \cdots c_{1, k-1} & 0 \\ \hline
\begin{array}{c} \vdots \\ c_{k-1, 1}\end{array} & C & \begin{array}{c} \vdots \\ 0\end{array} \\ \hline
0 & \cdots \ \ \ 0 & s_k
\end{array}\right),$$
thus $\begin{pmatrix} I_{p-1}\oplus (-I_{q-1}) & C \\ 0_{k-2} & 0_{k-2}\end{pmatrix}$ is a principal submatrix of $\begin{pmatrix} W_2^*(U^*(I_p\oplus(-I_q))U)W_2 & W_2^*BW_2 \\ 0_{k} & 0_{k}\end{pmatrix}$.
Note that $C$ is positive semidefinite with eigenvalues $d_1\ge d_2\ge\cdots\ge d_{k-2}\ge 0$. Let  $D=\diag(d_1, \cdots, d_{k-2})$, then $C=U'D{U'}^*$ for some unitary matrix $U'\in M_{k-2}$. Let $W_3=U'\oplus U'\in M_{2k-4}$ be unitary, we have
$$W_3^*\begin{pmatrix} I_{p-1}\oplus (-I_{q-1}) & C \\ 0_{k-2} & 0_{k-2}\end{pmatrix}W_3=\begin{pmatrix} {U'}^*(I_{p-1}\oplus (-I_{q-1}))U' & D \\ 0_{k-2} & 0_{k-2}\end{pmatrix}.$$ From the interlacing property, we deduce that
$$s_1\ge d_1\ge s_2\ge d_2\ge \cdots\ge s_{k-2}\ge d_{k-2}\ge s_{k-1},$$
thus $D \succeq B_1=\diag(s_2, \dots, s_{k-1})$.  Corollary \ref{2.4} yields that
$$w\Big(\begin{pmatrix} {U'}^*(I_{p-1}\oplus (-I_{q-1}))U' & D \\ 0_{k-2} & 0_{k-2}\end{pmatrix}\Big) \ge w\Big(\begin{pmatrix} {U'}^*(I_{p-1}\oplus (-I_{q-1}))U' & B_1 \\ 0_{k-2} & 0_{k-2}\end{pmatrix}\Big).$$
Therefore, we obtain
\begin{eqnarray*}
& & w\Big(\begin{pmatrix} U^*(I_p\oplus(-I_q))U & B\\ 0_k & 0_k\end{pmatrix}\Big)
\ge w\Big(\begin{pmatrix} I_{p-1}\oplus (-I_{q-1}) & C \\ 0_{k-2} & 0_{k-2}\end{pmatrix}\Big) \\
&=& w\Big(\begin{pmatrix} {U'}^*(I_{p-1}\oplus (-I_{q-1}))U' & D \\ 0_{k-2} & 0_{k-2}\end{pmatrix}\Big)
\ge w\Big(\begin{pmatrix} {U'}^*(I_{p-1}\oplus (-I_{q-1}))U' & B_1\\ 0_{k-2} & 0_{k-2}\end{pmatrix}\Big)\end{eqnarray*}
as asserted. This completes the proof.
\end{proof}

We are now ready to prove Theorem \ref{t36}.

\begin{proof}[Proof of Theorem \ref{t36}]
 First we consider $\alpha$.
Let $B=Q_0S_BQ_1^*$ be the singular value decomposition of $B$ and $W_0=Q_0\oplus Q_1\in M_{2k}$ be unitary, then, for any unitary $U\in M_k$, we have
$$W_0^*\begin{pmatrix} U^*AU & B\\ 0_k & 0_k\end{pmatrix}W_0=\begin{pmatrix} (UQ_0)^*A(UQ_0) & S_B\\ 0_k & 0_k\end{pmatrix}.$$
Since $UQ_0\in M_k$ is also unitary, we deduce that
\begin{equation}\label{eq3}
\alpha=\min\left\{w\Big(\begin{pmatrix} U^*AU & S_B\\ 0_k & 0_k\end{pmatrix}\Big) : U\in M_k \mbox{ is unitary}\right\}\le
w\Big(\left(\begin{array}{c|c}\begin{array}{ccc}  & & J_q\\ & I_{p-q} & \\ J_q & & \end{array} & S_B
\\ \hline 0_k & 0_k\end{array}\right)\Big).
\end{equation}
Therefore, we may assume that $B=S_B=\diag(s_1, \dots, s_k)$. Let $B_j=\diag(s_{j+1}, \dots, s_{k-j})\in M_{k-2j}$ for $j=1, \dots, q$.

Now, for any unitary $U\in M_k$, apply Lemma \ref{l35} repeatedly, we obtain
\begin{eqnarray*}
& & w\Big(\begin{pmatrix} U^*AU & B\\ 0_k & 0_k\end{pmatrix}\Big)   \\
& \ge & w\Big(\left(\begin{array}{c|c}\begin{array}{cc} 0 & 1\\ 1 & 0\end{array} & \begin{array}{cc} s_1 & 0\\ 0 & s_k\end{array}
\\ \hline 0_2 & 0_2\end{array}\right)\oplus
\begin{pmatrix} U_1^*(I_{p-1}\oplus (-I_{q-1}))U_1 & B_1\\ 0_{k-2} & 0_{k-2}\end{pmatrix}\Big) \\
& \ge & w\Big(\left(\begin{array}{c|c}\begin{array}{cc} 0 & 1\\ 1 & 0\end{array} & \begin{array}{cc} s_1 & 0\\ 0 & s_k\end{array}
\\ \hline 0_2 & 0_2\end{array}\right)\oplus
\left(\begin{array}{c|c}\begin{array}{cc} 0 & 1\\ 1 & 0\end{array} & \begin{array}{cc} s_2 & 0\\ 0 & s_{k-1}\end{array}
\\ \hline 0_2 & 0_2\end{array}\right) \oplus
\begin{pmatrix} U_2^*(I_{p-2}\oplus (-I_{q-2}))U_2 & B_2\\ 0_{k-4} & 0_{k-4}\end{pmatrix}\Big) \\
& \ge & \cdots \\
& \ge & w\Big(\left(\begin{array}{c|c}\begin{array}{cc} 0 & 1\\ 1 & 0\end{array} & \begin{array}{cc} s_1 & 0\\ 0 & s_k\end{array}
\\ \hline 0_2 & 0_2\end{array}\right)\oplus \cdots\oplus
\left(\begin{array}{c|c}\begin{array}{cc} 0 & 1\\ 1 & 0\end{array} & \begin{array}{cc} s_q & 0\\ 0 & s_{k-q+1}\end{array}
\\ \hline 0_2 & 0_2\end{array}\right) \oplus
\begin{pmatrix} U_q^*I_{p-q}U_q & B_q\\ 0_{k-2q} & 0_{k-2q}\end{pmatrix}\Big) \\
& = & w\Big(\left(\begin{array}{c|c}\begin{array}{cc} 0 & 1\\ 1 & 0\end{array} & \begin{array}{cc} s_1 & 0\\ 0 & s_k\end{array}
\\ \hline 0_2 & 0_2\end{array}\right)\oplus \cdots\oplus
\left(\begin{array}{c|c}\begin{array}{cc} 0 & 1\\ 1 & 0\end{array} & \begin{array}{cc} s_q & 0\\ 0 & s_{p+1}\end{array}
\\ \hline 0_2 & 0_2\end{array}\right) \oplus
\begin{pmatrix} I_{p-q} & B_q\\ 0_{p-q} & 0_{p-q}\end{pmatrix}\Big),
\end{eqnarray*}
where $U_j\in M_{k-2j}$ is unitary for $j=1, \dots, q$. Note that the matrix
$$\left(\begin{array}{c|c}\begin{array}{ccc}  & & J_q\\ & I_{p-q} & \\ J_q & & \end{array} & S_B
\\ \hline 0_k & 0_k\end{array}\right)$$
is permutationally similar to
$$\left(\begin{array}{c|c}\begin{array}{cc} 0 & 1\\ 1 & 0\end{array} & \begin{array}{cc} s_1 & 0\\ 0 & s_k\end{array}
\\ \hline 0_2 & 0_2\end{array}\right)\oplus \cdots\oplus
\left(\begin{array}{c|c}\begin{array}{cc} 0 & 1\\ 1 & 0\end{array} & \begin{array}{cc} s_q & 0\\ 0 & s_{p+1}\end{array}
\\ \hline 0_2 & 0_2\end{array}\right) \oplus
\begin{pmatrix} I_{p-q} & B_q\\ 0_{p-q} & 0_{p-q}\end{pmatrix},$$
hence we conclude that
$$w\Big(\begin{pmatrix} U^*AU & B\\ 0_k & 0_k\end{pmatrix}\Big) \ge w\Big(\left(\begin{array}{c|c}\begin{array}{ccc}  & & J_q\\ & I_{p-q} & \\ J_q & & \end{array} & S_B
\\ \hline 0_k & 0_k\end{array}\right)\Big)$$
for any unitary $U\in M_k$, that is,
$$\alpha\ge
w\Big(\left(\begin{array}{c|c}\begin{array}{ccc}  & & J_q\\ & I_{p-q} & \\ J_q & & \end{array} & S_B
\\ \hline 0_k & 0_k\end{array}\right)\Big) \ \ \mbox{ or } \ \
\alpha=
w\Big(\left(\begin{array}{c|c}\begin{array}{ccc}  & & J_q\\ & I_{p-q} & \\ J_q & & \end{array} & S_B
\\ \hline 0_k & 0_k\end{array}\right)\Big)$$
by \eqref{eq3}.

Now we turn to $\beta$.
From the proof on $\alpha$, we have
\begin{eqnarray*}
\alpha & = & w\Big(\left(\begin{array}{c|c}\begin{array}{ccc}  & & J_q\\ & I_{p-q} & \\ J_q & & \end{array} & S_B
\\ \hline 0_k & 0_k\end{array}\right)\Big) \\
&= &  w\Big(\left(\begin{array}{c|c}\begin{array}{cc} 0 & 1\\ 1 & 0\end{array} & \begin{array}{cc} s_1 & 0\\ 0 & s_k\end{array}
\\ \hline 0_2 & 0_2\end{array}\right)\oplus \cdots\oplus
\left(\begin{array}{c|c}\begin{array}{cc} 0 & 1\\ 1 & 0\end{array} & \begin{array}{cc} s_q & 0\\ 0 & s_{p+1}\end{array}
\\ \hline 0_2 & 0_2\end{array}\right) \oplus
\begin{pmatrix} I_{p-q} & B_q\\ 0_{p-q} & 0_{p-q}\end{pmatrix}\Big)\\
&=& \max\left\{\max_{1\le j\le q}w\Big(\left(\begin{array}{c|c}\begin{array}{cc} 0 & 1\\ 1 & 0\end{array} & \begin{array}{cc} s_j & 0\\ 0 & s_{k-j+1}\end{array}
\\ \hline 0_2 & 0_2\end{array}\right)\Big), \ \ \max_{q+1\le j\le p} w\Big(\begin{pmatrix} 1 & s_{j}\\ 0 & 0\end{pmatrix}\Big)\right\}.
\end{eqnarray*}
Since $s_{q+1}\ge s_{q+2}\ge \cdots\ge s_p$, we have
$w\Big(\begin{pmatrix} 1 & s_{j}\\ 0 & 0\end{pmatrix}\Big) \le w\Big(\begin{pmatrix} 1 & s_{q+1}\\ 0 & 0\end{pmatrix}\Big)=(1+\sqrt{1+s_{q+1}^2})/2$ for all $q+2\le j\le p$. Therefore, we obtain
\begin{equation}\label{ee1}
\alpha=\max\left\{\max_{1\le j\le q}w\Big(\left(\begin{array}{c|c}\begin{array}{cc} 0 & 1\\ 1 & 0\end{array} & \begin{array}{cc} s_j & 0\\ 0 & s_{k-j+1}\end{array}
\\ \hline 0_2 & 0_2\end{array}\right)\Big), \ \ \frac{1+\sqrt{1+s_{q+1}^2}}{2}\right\}.
\end{equation}
On the other hand, since $\tilde{B} = B \oplus s_{q+1}I_{p-q}$, from \eqref{eq1}, we have
$$S_{\tilde{B}}=\diag(s_1, \dots, s_q)\oplus s_{q+1}I_{p-q}\oplus \diag(\underbrace{s_{q+1}, \dots, s_p}_{p-q})\oplus \diag(\underbrace{s_{p+1}, \dots, s_k}_q).$$
From we have proved before, we obtain
$$
\beta  \equiv  \min\left\{w\Big(\begin{pmatrix} V^* \tilde{A}V & \tilde{B}\\ 0_{2p} & 0_{2p}\end{pmatrix}\Big) : V\in M_{2p} \mbox{ is unitary}\right\}
=  \ w\Big(\left(\begin{array}{cc} J_{2p} & S_{\tilde{B}}
\\ 0_{2p} & 0_{2p}\end{array}\right)\Big). $$
Note that $\left(\begin{array}{cc} J_{2p} & S_{\tilde{B}}
\\ 0_{2p} & 0_{2p}\end{array}\right)$ is permutationally similar to
$$\Bigg(\sum_{j=1}^q\oplus\left(\begin{array}{c|c}\begin{array}{cc} 0 & 1\\ 1 & 0\end{array} & \begin{array}{cc} s_j & 0\\ 0 & s_{k-j+1}\end{array}
\\ \hline 0_2 & 0_2\end{array}\right)\Bigg) \oplus \Bigg(\sum_{j=q+1}^p\oplus\left(\begin{array}{c|c}\begin{array}{cc} 0 & 1\\ 1 & 0\end{array} & \begin{array}{cc} s_{q+1} & 0\\ 0 & s_{j}\end{array}
\\ \hline 0_2 & 0_2\end{array}\right)\Bigg).$$
In particular, the matrix
$$\left(\begin{array}{c|c}\begin{array}{cc} 0 & 1\\ 1 & 0\end{array} & \begin{array}{cc} s_{q+1} & 0\\ 0 & s_{q+1}\end{array}
\\ \hline 0_2 & 0_2\end{array}\right)$$ is unitarily similar to $\begin{pmatrix} 1 & s_{q+1} \\ 0 & 0\end{pmatrix}\oplus \begin{pmatrix} -1 & s_{q+1} \\ 0 & 0\end{pmatrix}$, this yields that
$$w\Big(\left(\begin{array}{c|c}\begin{array}{cc} 0 & 1\\ 1 & 0\end{array} & \begin{array}{cc} s_{q+1} & 0\\ 0 & s_{q+1}\end{array}
\\ \hline 0_2 & 0_2\end{array}\right)\Big)
=w\Big( \begin{pmatrix} 1 & s_{q+1} \\ 0 & 0\end{pmatrix}\Big)
=  \frac{1+\sqrt{1+s_{q+1}^2}}{2}.$$
Moreover, since $s_{q+1}\ge s_{q+2}\ge \cdots\ge s_p$, by Corollary \ref{2.4}, we have
$$w\Big(\left(\begin{array}{c|c}\begin{array}{cc} 0 & 1\\ 1 & 0\end{array} & \begin{array}{cc} s_{q+1} & 0\\ 0 & s_{j}\end{array}
\\ \hline 0_2 & 0_2\end{array}\right)\Big) \le w\Big(\left(\begin{array}{c|c}\begin{array}{cc} 0 & 1\\ 1 & 0\end{array} & \begin{array}{cc} s_{q+1} & 0\\ 0 & s_{q+1}\end{array}
\\ \hline 0_2 & 0_2\end{array}\right)\Big) \ \  \mbox{for all }  q+2\le j\le p.$$
It follows that
$$w\Big(\sum_{j=q+1}^p\oplus\left(\begin{array}{c|c}\begin{array}{cc} 0 & 1\\ 1 & 0\end{array} & \begin{array}{cc} s_{q+1} & 0\\ 0 & s_{j}\end{array}
\\ \hline 0_2 & 0_2\end{array}\right)\Big)= w\Big(\left(\begin{array}{c|c}\begin{array}{cc} 0 & 1\\ 1 & 0\end{array} & \begin{array}{cc} s_{q+1} & 0\\ 0 & s_{q+1}\end{array}
\\ \hline 0_2 & 0_2\end{array}\right)\Big) = \frac{1+\sqrt{1+s_{q+1}^2}}{2}.$$
Hence we obtain
\begin{eqnarray*}
\beta &= & w\Big(\left(\begin{array}{cc} J_{2p} & S_{\tilde{B}}
\\ 0_{2p} & 0_{2p}\end{array}\right)\Big) \\
& = & \max\left\{ w\Big(\sum_{j=1}^q\oplus\left(\begin{array}{c|c}\begin{array}{cc} 0 & 1\\ 1 & 0\end{array} & \begin{array}{cc} s_j & 0\\ 0 & s_{k-j+1}\end{array}
\\ \hline 0_2 & 0_2\end{array}\right)\Big), w\Big( \sum_{j=q+1}^p\oplus\left(\begin{array}{c|c}\begin{array}{cc} 0 & 1\\ 1 & 0\end{array} & \begin{array}{cc} s_{q+1} & 0\\ 0 & s_{j}\end{array}
\\ \hline 0_2 & 0_2\end{array}\right)\Big)  \right\}\\
& = & \max\left\{\max_{1\le j\le q}w\Big(\left(\begin{array}{c|c}\begin{array}{cc} 0 & 1\\ 1 & 0\end{array} & \begin{array}{cc} s_j & 0\\ 0 & s_{k-j+1}\end{array}
\\ \hline 0_2 & 0_2\end{array}\right)\Big), \ \ \frac{1+\sqrt{1+s_{q+1}^2}}{2}\right\} = \alpha
\end{eqnarray*}
from \eqref{ee1}, this completes the proof.
\end{proof}

\medskip
We can extend Theorem \ref{t36} to infinite dimensional operators using the fact
that $B(H)$ is a $C$*-algebra of real rank zero; see \cite{BP}.
The positive semidefinite operator $B$ in its integral representation
$\int_{\lambda} \lambda dE(\lambda)$ can be approximated by a sequence of
 ``diagonal'' operators $\{B_m\}$ each has a finite spectrum.
By a suitable choice of unitary basis, we may consider operators of the form
$T_m = \begin{pmatrix} A_m & B_m \cr 0 & 0 \cr\end{pmatrix}$
with $A_m = \begin{pmatrix} 0 & 0 & J \cr 0 & I & 0 \cr J & 0 & 0 \cr\end{pmatrix}$,
where $J$ is the infinite dimensional extension of $J_k$, say, $J(e_n) = e_{-n}$
if $J \in B(H)$ with $H =\ell_2(\IZ)$, and $J(f(x)) = f(-x)$ if $J \in B(H)$ with $H = L_2(\IR)$.
Then one can show that $w(T_m)$ is minimum among operators of the form
$\begin{pmatrix} UA_mU^* & B_m \cr 0 & 0 \end{pmatrix}$ and  the conclusion also holds
for the limit.

\section{Matrix powers and constant norm attaining vectors}

In this section, we study matrices for which there exists a unit vector
$x$ satisfying
\[
\|A^k x\| = \|A^k\| = \|A\| \  \  \mbox{ for all } k = 1,2,\ldots.
\]
Such a vector will be called a \emph{common norm attaining vector} for the powers of $A$.
This problem is motivated by a simple observation.
If $A \in M_n$ is an idempotent, i.e.,  $A^2 = A$, then the sequence $\{\|A^k\|\}_{k \ge 1}$ is
constant, and there exists
a unit vector attaining these norms simultaneously. This leads naturally to the following question.

\begin{problem} \label{prob}
For which matrices $A \in M_n$ does there exist a unit vector $x$ such that
\[
\|A^k x\| = \|A^k\| = \|A\| \  \  \mbox{ for all } k = 1,2,\ldots?
\]
\end{problem}

Observe that if $\|A\| = \|A^2\| > 0$, then
\[
\|A\| = \|A^2\| \le \|A\|^2 ,
\]
which implies that $\|A\| \ge 1$.

The cases $\|A\| = 1$ and $n = 2$ were studied in \cite{GWW} and \cite{P}, respectively, leading to the following results.
The next two results are from \cite[Corollary 2.6 and Proposition 3.2 (b)]{GWW}.

\begin{proposition}\label{prop41}
For an $n \times n$ matrix $A$, the following are equivalent:
\begin{itemize}
\item[\rm (a)] $\|A^n\| = \|A\| = 1$.
\item[\rm (b)] $\|A^k\| = 1$ for all $k \ge 1$.
\item[\rm (c)] There exists a unit vector $x$ such that $\|A^k x\| = \|A^k\| = \|A\| = 1$ for all $k \ge 1$.
\item[\rm (d)] $A$ is unitarily similar to $[a] \oplus B$ with $\|B\| \le |a| = 1$.
\end{itemize}
\end{proposition}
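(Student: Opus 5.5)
We prove the cycle (d) $\Rightarrow$ (c) $\Rightarrow$ (b) $\Rightarrow$ (a) $\Rightarrow$ (d). The first three implications are routine; all the work is in (a) $\Rightarrow$ (d).

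For (d) $\Rightarrow$ (c), suppose $A = [a]\oplus B$ with $|a|=1\ge\|B\|$. Then $A^k=[a^k]\oplus B^k$, so $\|A^k\| = \max\{1,\|B^k\|\}=1$. The first basis vector $e_1$ gives $\|A^ke_1\|=|a|^k=1$ for every $k$. The implications (c) $\Rightarrow$ (b) and (b) $\Rightarrow$ (a) are immediate from the definitions.

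For (a) $\Rightarrow$ (d), assume $\|A\|=\|A^n\|=1$ and set $K_j=\{x:\|A^jx\|=\|x\|\}$. Since $\|A\|\le 1$, the operator $I-A^{*j}A^j$ is positive semidefinite. Hence $K_j=\ker(I-A^{*j}A^j)$ is a subspace, and it is nonzero for $j\le n$ because $\|A^j\|=1$ is attained in finite dimensions. These subspaces decrease: $x\in K_{j+1}$ forces
$$\|x\|=\|A^{j+1}x\|\le\|A^jx\|\le\|x\|,$$
so equality holds throughout. Moreover $A$ maps $K_{j+1}$ into $K_j$.

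The key step is to find a nonzero subspace $K$ with $AK\subseteq K$ on which $A$ is isometric. Then $A|_K$ is unitary, so it has an eigenvector $v$ with $Av=av$ and $|a|=1$. Consider the chain
$$\IC^n\supseteq K_1\supseteq K_2\supseteq\cdots\supseteq K_n\neq 0.$$
Its first $n+1$ terms are nonzero subspaces of an $n$-dimensional space, so it must stabilize: $K_j=K_{j+1}$ for some $j\le n-1$. Then $AK_j=AK_{j+1}\subseteq K_j$, and $A$ is isometric on $K_j\subseteq K_1$. This dimension count is the crux, and it is exactly why the hypothesis uses the power $n$.

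It remains to split off $v$, and the obstacle is showing the splitting is an orthogonal direct sum. Since $\|A\|\le 1$ and $\|Av\|=\|v\|$, we get $A^*Av=v$, so $A^*(av)=v$ and therefore $A^*v=\bar a v$. Thus $\mathrm{span}\{v\}$ is invariant under both $A$ and $A^*$, and so $A$ is unitarily similar to $[a]\oplus B$. Finally, $\|B\|\le\|A\|=1=|a|$, which gives (d).
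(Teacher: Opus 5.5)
Your proof is correct and complete. The paper itself gives no proof of this proposition. It quotes the result from \cite[Corollary 2.6 and Proposition 3.2 (b)]{GWW}, so there is no in-paper argument to match, and what you have written is a self-contained proof of the imported statement.

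\textbf{What your route buys.} The crux is the pigeonhole argument on the nested chain
$\IC^n\supseteq K_1\supseteq\cdots\supseteq K_n\neq 0$, where $K_j=\ker(I-A^{*j}A^j)$. It yields an $A$-invariant subspace on which $A$ is isometric, and hence a unimodular eigenvalue.

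\begin{itemize}
\item It replaces the more usual appeal to the classical fact that $\|A^n\|=\|A\|^n$ forces $\rho(A)=\|A\|$ for $A\in M_n$.
\item It makes visible why the exponent $n$ is needed. The nilpotent Jordan block $J_n(0)$ has $\|J_n(0)\|=\|J_n(0)^{n-1}\|=1$ but $J_n(0)^n=0$, so $n-1$ would not suffice.
\item It gives a stronger conclusion almost for free. The stabilized subspace $K=K_j$ actually reduces $A$: any $v\in K$ can be written as $v=Aw$ with $w\in K$, and then $A^*v=A^*Aw=w\in K$. So $A$ is unitarily similar to $U\oplus B$ with $U$ unitary and $\|B\|\le 1$.
\end{itemize}

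\textbf{Details you use tacitly.} These are all fine.
\begin{itemize}
\item $I-A^{*j}A^j\ge 0$ holds because $\|A^j\|\le\|A\|^j\le 1$.
\item $K_j\neq 0$ for $j\le n$ follows from $K_n\neq 0$ together with the nesting.
\item The stabilization index may be $j=0$. This is the case $K_1=\IC^n$, i.e.\ $A$ unitary, and your argument still goes through with $K_0=\IC^n$.
\item In (d)$\Rightarrow$(c), the norm-attaining vector is $Ue_1$ for the implementing unitary $U$.
\end{itemize}
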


\begin{proposition}\label{prop42}
For a $2 \times 2$ matrix $A$, the following are equivalent:
\begin{itemize}
\item[\rm (a)] There exists a unit vector $x$ such that $\|A^k x\| = \|A^k\| = \|A\| > 1$ for all $k \ge 1$;
\item[\rm (b)] $A$ is unitarily similar to
\[
\lambda \begin{pmatrix}
1 & b \\
0 & 0
\end{pmatrix},
\]
where $|\lambda| = 1$ and $b \neq 0$.
\end{itemize}
\end{proposition}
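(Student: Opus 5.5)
The direction (b)$\Rightarrow$(a) is a direct computation; for (a)$\Rightarrow$(b) the plan is to show $A$ must be singular, hence rank one, and then read off the form.

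For (b)$\Rightarrow$(a), note that $E=\begin{pmatrix}1&b\\0&0\end{pmatrix}$ is an idempotent. Hence $A^k=\lambda^kE$ and $\|A^k\|=\|E\|=\sqrt{1+|b|^2}>1$ for every $k$. The unit vector $x=(1,\bar b)^t/\sqrt{1+|b|^2}$ satisfies $\|Ex\|=\sqrt{1+|b|^2}=\|E\|$. So this single $x$ attains $\|A^k\|$ for all $k$.

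For (a)$\Rightarrow$(b), let $c=\|A\|>1$ and let $x$ be the common norm attaining vector. First, $\rho(A)=\lim\|A^k\|^{1/k}=1$, so both eigenvalues have modulus at most $1$ and $|\det A|\le 1$. Next, since $\|Ax\|=\|A\|$, $x$ is a top right singular vector: $A^*Ax=c^2x$. Writing $Ax=cy$ with $y$ a unit vector, we also get $A^*y=cx$. Likewise $\|A^2x\|=\|A^2\|=c$ gives $(A^*)^2A^2x=c^2x$. Substituting $A^2x=cAy$ and $c^2x=cA^*y$ yields $A^*\big(A^*Ay-y\big)=0$.

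I then split on invertibility, which I expect to be the only delicate point. If $A$ were invertible, we would get $A^*Ay=y$, so $1$ is a singular value of $A$ with right singular vector $y$. Since $c>1$, $y$ is orthogonal to $x$, so the singular values of $A$ are exactly $c$ and $1$. This gives $|\det A|=c>1$, contradicting $|\det A|\le1$. Hence $A$ is singular, and since $A\neq0$ it has rank one.

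Write $A=uv^*$ with $\|u\|\|v\|=c$. Then $A^k=(v^*u)^{k-1}uv^*$, so $\|A^k\|=c|v^*u|^{k-1}$, and $\|A^2\|=\|A\|$ forces $|v^*u|=1$. Take an orthonormal basis with first vector $u/\|u\|$. In it the second row of $A$ vanishes, the $(1,1)$ entry is $v^*u=\lambda$ with $|\lambda|=1$, and the $(1,2)$ entry is some $\lambda b$. Finally $\|A\|=\sqrt{1+|b|^2}=c>1$ forces $b\ne0$, which gives the form in (b).
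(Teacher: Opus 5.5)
Your argument is correct, but it follows a different route from the paper. The paper does not prove this proposition itself; it quotes it from \cite{GWW}. Within the paper, though, it is the $n=2$ case of Theorem~\ref{t44}, which runs as follows:
\begin{itemize}
\item the Krylov space of $x$ is all of $\IC^2$;
\item Lemma~\ref{l46} runs an induction over the basis $\{x,Ax\}$, using norm attainment for $k\le 3$, to get $A^*A=(A^2)^*A^2$;
\item the end of the proof of Theorem~\ref{t44} shows $A$ is singular, since a nonsingular solution of that identity is unitary;
\item Theorem~\ref{t410} then yields the form $\begin{pmatrix}\lambda&\lambda b\\ 0&0\end{pmatrix}$.
\end{itemize}

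Your identity $A^*(A^*Ay-y)=0$ is exactly the base step $A^*(A^*A-I)Ax=0$ of that induction. Where the paper pushes the induction one step further using $\|A^3x\|=\|A^3\|$, you instead exclude the invertible case by comparing the singular values $c,1$ with $|\det A|\le\rho(A)^2=1$. You also replace the structure theorem by a direct rank-one computation.

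What each route buys:
\begin{itemize}
\item Yours is shorter and self-contained. It is tied to $n=2$ (determinant as the product of two singular values; singular and nonzero means rank one), and it uses the hypothesis for all $k$ via Gelfand's formula.
\item The paper's route works in every dimension and needs the hypothesis only for $k\le m+1$.
\end{itemize}

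That extra input is not decorative in your version. The matrix $A=\begin{pmatrix}0&1\\ 2&0\end{pmatrix}$ (the $n=2$ case of Proposition~\ref{prop48}) is invertible and satisfies $\|A^kx\|=\|A^k\|=\|A\|=2$ for $k=1,2$ with $x=e_1$. So some information beyond $k\le 2$ is required, whether your spectral-radius bound or the paper's $k=3$ condition.
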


Regarding Problem \ref{prob}, the next result shows that the matrix
$T=\begin{pmatrix}
U & B \\
0 & 0
\end{pmatrix}\in M_n$,
where $U$ is unitary, satisfies the condition in Problem \ref{prob}.

\begin{theorem}\label{t410}
Let $T\in M_n$. Then the following are equivalent.
\begin{itemize}
  \item[\rm (a)] $T$ is unitarily similar to a matrix of the form
  \[
  \begin{pmatrix}
    U & B\\
    0 & 0_{n-r}
  \end{pmatrix},
  \]
  where $U\in M_r$, $0\le r\le n$, is unitary.
  \item[\rm (b)] $T^*T={T^k}^*T^k$ for all $k\ge 1$.
  \item[\rm (c)] $T^* T={T^2}^* T^2$.
\end{itemize}
In this case, the following statements hold:
\begin{itemize}
  \item[\rm (i)] $\|T^kx\|=\|T^k\|=\|T\|$ for some unit vector $x\in\IC^n$ and for all $k\ge 1$.
    \item[\rm (ii)] If $T$ is nonzero, then $\|T\|=\sqrt{1+\|B\|^2}$.
  \item[\rm (iii)] If $\lambda$ is a nonzero eigenvalue of $T$, then $|\lambda|=1$.
\end{itemize}
\end{theorem}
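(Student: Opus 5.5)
We prove (a)$\Rightarrow$(b)$\Rightarrow$(c)$\Rightarrow$(a), and then read off (i)--(iii) from the block form in (a).

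\emph{(a)$\Rightarrow$(b).} Assume $T=\begin{pmatrix} U & B\\ 0 & 0\end{pmatrix}$ with $U$ unitary. Induction gives $T^k=\begin{pmatrix} U^k & U^{k-1}B\\ 0 & 0\end{pmatrix}$. Hence
$$(T^k)^*T^k=\begin{pmatrix} (U^k)^*U^k & (U^k)^*U^{k-1}B\\ B^*(U^{k-1})^*U^k & B^*(U^{k-1})^*U^{k-1}B\end{pmatrix}=\begin{pmatrix} I & U^*B\\ B^*U & B^*B\end{pmatrix}.$$
The right-hand side does not depend on $k$, so it equals $T^*T$. The implication (b)$\Rightarrow$(c) is trivial.

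\emph{(c)$\Rightarrow$(a).} This is the main step. Let $R=\operatorname{ran}T$ and let $r=\dim R$. The identity $\|T(Tx)\|=\|Tx\|$ holds for all $x$, so $T$ is isometric on $R$. It remains to show that $T$ maps $R$ into $R$. This is automatic, since $T(R)\subseteq \operatorname{ran}T=R$. An isometry of the finite-dimensional space $R$ into itself is onto, so $T|_R$ is a unitary operator $U$ on $R$.

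Now take an orthonormal basis consisting of a basis of $R$ followed by a basis of $R^\perp$. In this basis the second block row of $T$ vanishes, because $\operatorname{ran}T=R$. The $(1,1)$ block is $U$. This gives the form in (a). The only care needed is the bookkeeping in this basis change; the edge cases $r=0$ and $r=n$ are included.

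\emph{Statements (i)--(iii).} From the computation above, $\|T^kx\|=\|Tx\|$ for every $x$ and every $k\ge 1$. Hence $\|T^k\|=\|T\|$, and any unit vector attaining $\|T\|$ gives (i).

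For (ii), $T^*T$ is unitarily similar to
$$\begin{pmatrix} I & U^*B\\ B^*U& B^*B\end{pmatrix}=\begin{pmatrix} I\\ B^*U\end{pmatrix}\begin{pmatrix} I & U^*B\end{pmatrix}=XX^*.$$
Its nonzero eigenvalues are those of $X^*X=I+U^*BB^*U$. The largest of these is $1+\|B\|^2$, so $\|T\|=\sqrt{1+\|B\|^2}$. The hypothesis $T\ne 0$ forces $r\ge1$.

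For (iii), the characteristic polynomial of the block-triangular matrix shows that $\sigma(T)=\sigma(U)\cup\{0\}$. Every eigenvalue of the unitary $U$ has modulus $1$, so every nonzero eigenvalue of $T$ is unimodular.
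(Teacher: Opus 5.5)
Your proof is correct, but your key step (c)$\Rightarrow$(a) takes a different route from the paper's.

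The paper first gets (c)$\Rightarrow$(b) from the telescoping identity ${T^k}^*T^k={T^{k-2}}^*({T^2}^*T^2)T^{k-2}={T^{k-1}}^*T^{k-1}$. It then proves (b)$\Rightarrow$(a) by Schur triangularization into $\begin{pmatrix} U & B\\ 0 & N\end{pmatrix}$ with $U$ invertible and $N$ nilpotent. Condition (b) with $k=n$ gives $\rank T=\rank(T^*T)=\rank({T^n}^*T^n)=\rank T^n=\rank U$, which forces $N=0$. Cancelling $U$ in the $(1,1)$ blocks of $T^*T={T^2}^*T^2$ then gives $U^*U=I$.

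You instead read (c) as $\|T^2x\|=\|Tx\|$, i.e.\ $T$ is isometric on its range, which is automatically invariant. You then split along $\operatorname{ran}T\oplus(\operatorname{ran}T)^\perp$. This avoids Schur's theorem, the rank count, and the separate (c)$\Rightarrow$(b) step, which you get for free through (a). It also shows exactly where finite dimensionality is used. In $B(H)$ the same argument with $\overline{\operatorname{ran}T}$ only produces an isometric $(1,1)$ block. The unilateral shift satisfies (c) but is not of the form (a), so this cannot be improved. The paper's route is more computational, but it obtains (b) from (c) purely algebraically.

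Your treatment of (i)--(iii) is fine; the paper gets (ii) slightly faster from $TT^*=(I_r+BB^*)\oplus 0$. In (iii) you should write $\sigma(T)\subseteq\sigma(U)\cup\{0\}$, since equality fails when $r=n$, though this does not affect the conclusion.
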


\begin{proof}
(b) $\Rightarrow$ (c) is trivial.

(c) $\Rightarrow$ (b). Assume that $T^*T={T^2}^*T^2$. Then, for $k\ge 2$,
$${T^k}^*T^k= {T^{k-2}}^*{T^2}^*T^2T^{k-2}={T^{k-2}}^*T^*TT^{k-2}={T^{k-1}}^*T^{k-1}.$$
Continue this process, we infer that ${T^k}^* T^k={T^{k-1}}^*T^{k-1}={T^{k-2}}^*T^{k-2}= \cdots =T^*T$ as required.

(b) $\Rightarrow$ (a). By the Schur’s triangularization theorem (cf. \cite[Theorem 2.3.1]{HJ}), $T$ is unitarily similar to an upper triangular matrix of the form
$
\begin{pmatrix}
U & B\\
0 & N
\end{pmatrix},
$
where $U$ is an $r \times r$ ($0\le r\le n$) nonsingular upper triangular matrix and $N\in M_{n-r}$ is upper triangular with $\sigma(N)=\{0\}$. We first check that $N=0$. Indeed, since $N$ is nilpotent, then $N^n=0$, and thus
$T^n=
\begin{pmatrix}
U^n & *\\
0 & 0
\end{pmatrix}$. Since $U$ is nonsingular, it follows that
$$
\rank(T^n)=\rank(U^n)=\rank(U).
$$
On the other hand, $\rank(X)=\rank (X^*X)$ for any matrix $X\in M_n$. Therefore, condition (b) yields that
$$\rank(T)=\rank(T^*T)=\rank({T^n}^* T^n)=\rank(T^n)=\rank (U).$$
Hence $\rank(N)=\rank(T)-\rank(U)=0$, or $N=0$ as asserted.

It remains to show that $U$ is unitary for $r\ge 1$. If $r=0$, then $T=N=0$ as asserted form. Therefore, we assume that $r\ge 1$. The equality $T^*T={T^2}^* T^2$ yields that
\[
\begin{pmatrix}
U^*U & U^*B\\
B^*U & B^*B
\end{pmatrix}
=
\begin{pmatrix}
{U^2}^*U^2 & {U^2}^*UB\\
B^*U^*U^2 & B^*U^*UB
\end{pmatrix}.
\]
Comparing the $(1,1)$-blocks yields
$U^*U = {U^2}^*U^2$.
Since $U$ is invertible, we multiply by $(U^*)^{-1}$ on the left and $U^{-1}$ on the right to obtain $I_r = U^*U$.
Hence $U$ is unitary.

(a) $\Rightarrow$ (b). After a unitary similarity, we may assume that
$
T=\begin{pmatrix}
U & B\\
0 & 0_{n-r}
\end{pmatrix},
$
where $U\in M_r$ is unitary.  Let $V=U\oplus I_{n-r}\in M_n$ be unitary, then, for $k\ge 1$, we have
$$T^k=\begin{pmatrix}
U^k & U^{k-1}B \\
0 & 0
\end{pmatrix}=\begin{pmatrix}
U^{k-1} & 0 \\
0 & I_{n-r}
\end{pmatrix}\begin{pmatrix}
U & B \\
0 & 0
\end{pmatrix}=V^{k-1}T,$$
and thus
$${T^k}^*T^k=T^*{V^{k-1}}^*V^{k-1}T=T^*T$$
as required.

Moreover, since $V$ is unitary, it follows that $\|T^k\|=\|V^{k-1}T\|=\|T\|$ for all $k\ge1$. Among
other things, since $T$ is a finite matrix, then $\|Tx\|=\|T\|$ for some unit vector $x\in\IC^n$,
thus $\|T^kx\|=\|V^{k-1}Tx\|=\|Tx\|=\|T\|=\|T^k\|$ for all $k\ge 1$, hence (i) holds.

Next, it is easily seen that
$$
TT^*=\begin{pmatrix}
I_r+BB^* & 0\\
0 & 0
\end{pmatrix},$$
hence $\|T\|^2=\|T^*\|^2=\|TT^*\|=\|I_r+BB^*\|=1+\|BB^*\|=1+\|B^*\|^2=1+\|B\|^2$, this proves (ii).

Finally, (iii) follows from $\sigma(T)\subseteq\sigma(U)\cup\{0\}$ and $U$ is unitary. This completes the proof.
\end{proof}

We aim to formulate a necessary and sufficient condition for a nonzero matrix
$A\in M_n$  satisfying
\[
\|A^k x\| = \|A^k\| = \|A\|  \quad \text{for all } k \ge 1 \mbox{ and for some unit vextor } x\in\IC^n.
\]
The following theorem is the main result of this section, it gives a complete solution of Problem \ref{prob}. Note that the case $\|A\|=1$ has been solved by Proposition \ref{prop41}. Therefore, we need only to consider the case $\|A\|>1$.

\begin{theorem}\label{t44}
Let $A\in M_n$ with $\|A\|>1$, and let $m$ be the degree of the minimal polynomial of $A$.
The following conditions are equivalent:
\begin{itemize}
\item[\rm (a)] There exists a unit vector $x\in \mathbb{C}^n$ such that
\[
\|A^k x\| = \|A^k\| = \|A\|  \quad \mbox{ for all } k = 1,2,\ldots,m+1.
\]

\item[\rm (b)] $A$ is unitarily similar to a matrix of the form
\[
\begin{pmatrix}
T & D\\
0 & C
\end{pmatrix},
\]
where $T\in M_r$, $2 \le r \le m$, satisfies
\[
T^*T = {T^2}^*T^2 \ \ \mbox{ and } \ \
\|A^k\| = \|T^k\| \quad \mbox{ for all }  k = 1,2,\ldots,m+1.
\]
In particular, if $r=2$, then $D=0$.
\end{itemize}
In this case, $A$ is singular.
\end{theorem}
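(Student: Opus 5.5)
The plan is to build the invariant subspace $T$ lives on as the Krylov subspace generated by the common norm attaining vector $x$, and to check the identity $T^*T={T^2}^*T^2$ through the Gram matrix of the vectors $A^jx$. Set $s=\|A\|>1$.

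\textbf{(b) $\Rightarrow$ (a).} Here $T$ is the compression of $A$ to an $A$-invariant subspace $K$, so $T^k=A^k|_K$ for all $k$. Since $T^*T={T^2}^*T^2$, Theorem \ref{t410}(i) gives a unit vector $x\in K$ with $\|T^kx\|=\|T^k\|=\|T\|$ for all $k$. Then
$$\|A^kx\|=\|T^kx\|=\|T^k\|=\|A^k\|,$$
and taking $k=1$ gives $\|T\|=\|A\|$, which proves (a).

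\textbf{(a) $\Rightarrow$ (b), construction of $T$.} Let $x$ be as in (a) and let $K=\mathrm{span}\{x,Ax,\dots,A^{m-1}x\}$. Because the minimal polynomial has degree $m$, $K$ is $A$-invariant and $r=\dim K\le m$. Writing $\IC^n=K\oplus K^\perp$ puts $A$ in the block form of (b), with $T=A|_K$.
\begin{itemize}
\item For $k\le m+1$, $x$ is a maximizing vector for $A^k$ with $\|A^k\|=s$. Hence ${A^k}^*A^kx=s^2x$.
\item Since $x\in K$, we get $\|A^k\|=\|A^kx\|=\|T^kx\|\le\|T^k\|\le\|A^k\|$, so $\|T^k\|=\|A^k\|$.
\item If $r=1$, then $Ax=\lambda x$ with $|\lambda|=s$, so $\|A^2x\|=s^2\neq s$, contradicting (a). Hence $r\ge2$.
\end{itemize}

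\textbf{(a) $\Rightarrow$ (b), the identity $T^*T={T^2}^*T^2$.} It suffices to compare the two sesquilinear forms on the spanning set $\{A^ix: 0\le i\le m-1\}$. For $1\le a\le b\le m+1$,
$$\langle A^bx,A^ax\rangle=\langle A^{b-a}x,{A^a}^*A^ax\rangle=s^2\langle A^{b-a}x,x\rangle,$$
which depends only on $b-a$. Apply this with $(a,b)=(i+1,j+1)$ and with $(a,b)=(i+2,j+2)$, where $0\le i\le j\le m-1$, so all exponents stay $\le m+1$. This gives
$$\langle TA^ix,TA^jx\rangle=\langle T^2A^ix,T^2A^jx\rangle,$$
and the case $i>j$ follows by conjugation. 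Hence $T^*T={T^2}^*T^2$ on $K$.

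\textbf{Singularity.} By Theorem \ref{t410}, $T$ is unitarily similar to $\begin{pmatrix}U&B\\0&0\end{pmatrix}$ with $U$ unitary. If the zero block were absent, $T$ would be unitary and $\|T\|=1<s$. So $T$ is singular, and therefore $A$ is singular.

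\textbf{The case $r=2$, showing $D=0$.} This is the step I expect to be the main obstacle. Here $K=\mathrm{span}\{x,Ax\}$, and by Theorem \ref{t410} and Proposition \ref{prop42}, $T$ is unitarily similar to $\lambda\begin{pmatrix}1&b\\0&0\end{pmatrix}$ with $|\lambda|=1$ and $|b|^2=s^2-1$. To get $D=0$ one must show $K$ is $A^*$-invariant. Half of this is free, since $A^*Ax=s^2x\in K$. The remaining point is $A^*A^2x\in K$. I would try first to combine three facts: ${A^2}^*A^2x=s^2x$, $\|A^*\|=s$, and $\langle A^*A^2x,Ax\rangle=s^2$. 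The idea is to force equality in a Cauchy--Schwarz step, using the explicit $2\times2$ form of $T$ to write $x$ and $A^2x=\lambda Ax$ in terms of the eigenvector of $T$ for $\lambda$ and the kernel vector of $T$. If that fails, I would check $\|A^2\|=\|T^2\|$ directly on the block form: a nonzero $D$ coupled with the nonzero column of $T$ should produce a vector whose image under $A^2$ has norm exceeding $s$.
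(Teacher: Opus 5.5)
Apart from the clause ``if $r=2$ then $D=0$'', your proof is correct. The direction (b)$\Rightarrow$(a), the bound $r\ge2$, the equalities $\|T^k\|=\|A^k\|$ and the singularity argument agree in substance with the paper. Your derivation of $T^*T={T^2}^*T^2$ takes a genuinely shorter route. The paper orthonormalizes the Krylov sequence so that $T$ is upper Hessenberg with nonzero subdiagonal, and then proves Lemma~\ref{l46} by induction over the basis $\{T^ke_1\}$. Your identity $\langle A^bx,A^ax\rangle=s^2\langle A^{b-a}x,x\rangle$ for $1\le a\le b\le m+1$ instead shows at once that the forms $\langle Tu,Tv\rangle$ and $\langle T^2u,T^2v\rangle$ agree on the spanning set $\{A^ix\}$. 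It needs no Hessenberg structure, no linear independence and no induction.

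The gap is the $r=2$ clause: you leave it as a plan, and the plan aims at the wrong target. Since $T^2=\lambda T$, you have $A^2x=\lambda Ax$. Hence $A^*A^2x=\lambda A^*Ax=\lambda s^2x$ lies in $K$ automatically, so the condition ``$A^*A^2x\in K$'' is vacuous and cannot force $D=0$. Besides $A^*Ax=s^2x$, what $A^*$-invariance of $K=\mathrm{span}\{x,Ax\}$ actually requires is $A^*x\in K$. Facts internal to $K$ together with $\|A^*\|=s$ do not give this. Take $A=\begin{pmatrix}1&b&0\\0&0&d\\0&0&0\end{pmatrix}$ with $b\ne0$, $0<|d|\le s=\sqrt{1+|b|^2}$, and $x=(1,\bar b,0)^t/s$. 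Then $\|A\|=\|Ax\|=\|A^2x\|=s$, $A^*Ax=s^2x$ and $\langle A^*A^2x,Ax\rangle=s^2$, yet $D\ne0$.

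What does the work is the $k=2$ hypothesis used as the \emph{vector} identity ${A^2}^*A^2x=s^2x$ in $\IC^n$, which involves $D^*$. It gives $s^2x={A^2}^*A^2x=A^*(A^*A^2x)=\lambda s^2A^*x$, so $A^*x=\bar\lambda x\in K$. Hence $K$ reduces $A$ and $D=0$.

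The paper reaches the same conclusion through matrix entries. Writing $T=\begin{pmatrix}\alpha&b\\0&0\end{pmatrix}$, the bound $\|A^*e_1\|\le\|A\|=\sqrt{1+|b|^2}$ kills the first row of $D$; this is your ``free half''. The first row of $A^2$ is then $(\alpha^2,\ \alpha b,\ bd_1,\dots,bd_{n-2})$, whose norm exceeds $\sqrt{1+|b|^2}=\|A^2\|$ unless every $d_j=0$. Your second fallback is close to this, but it only works once the first row of $D$ is known to vanish.
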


For the proof of Theorem \ref{t44}, we need the next lemma.

In the following, we use $\{e_1, \dots, e_n\}$ to denote the standard basis of $\IC^n$. Moreover, for any matrix $X\in M_n$, if $\|Xx\|=\|X\|$ for some unit vector $x\in\IC^n$, then $X^*Xx=\|X\|^2x$, because $\langle(\|X\|^2I_n-X^*X)x,x\rangle=0$ and $\|X\|^2I_n-X^*X$ is positive semidefinite.

\begin{lemma}\label{l46}
For $n\ge 2$, let $T$ be an $n\times n$ matrix of the form
\[
\begin{pmatrix}
t_{11} & t_{12} & \cdots & t_{1, n-1} & t_{1n}\\
t_{21} & t_{22} & \cdots & t_{2, n-1} & t_{2n}\\
0      & t_{32} & \cdots & t_{3, n-1} & t_{3n}\\
\vdots & \vdots & \ddots & \vdots     & \vdots \\
0      & 0      & \cdots & t_{n, n-1} & t_{nn}
\end{pmatrix},
\]
where $t_{i, i-1}\neq 0$ for $i=2,3,\ldots,n$.
If $\| T^{k}e_{1} \|= \| T^{k} \|= \| T \|$ for $k=1,2,\ldots, n+1$, then
\[
T^*T = {T^2}^*T^{2}.
\]
\end{lemma}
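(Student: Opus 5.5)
The plan is to exhibit a basis of $\IC^n$ on which $T^*T$ and ${T^2}^*T^2$ act identically, namely as multiplication by $s^2$, where $s=\|T\|$. The natural candidate is the Krylov basis $v_j = T^j e_1$, $j=0,1,\ldots,n-1$. The hypothesis gives $\|T^k v_j\| = \|T^{j+k}e_1\|$, which ties norm attainment of the powers of $T$ at $v_j$ back to the hypothesis at $e_1$.

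First I will check that $v_0,\ldots,v_{n-1}$ form a basis. This uses the unreduced upper Hessenberg structure. By induction on $j$, the vector $T^je_1$ has zero entries in positions $j+2,\ldots,n$, and its $(j+1)$-st entry equals $t_{21}t_{32}\cdots t_{j+1,j}\neq 0$. Indeed, applying $T$ to a vector supported on the first $j+1$ coordinates produces a vector supported on the first $j+2$ coordinates, and its $(j+2)$-nd entry is $t_{j+2,j+1}$ times the previous leading entry. So the matrix $[v_0\ \cdots\ v_{n-1}]$ is upper triangular with nonzero diagonal, hence invertible.

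Next, fix $j$ and $k$ with $j\ge 0$, $k\ge1$ and $j+k\le n+1$. For $j\ge1$ the hypothesis gives $\|v_j\| = \|T^je_1\| = s$ and $\|T^k v_j\| = \|T^{j+k}e_1\| = s$. It also gives $\|T^k\| = s$. Hence the unit vector $v_j/s$ satisfies $\|T^k(v_j/s)\| = \|T^k\|$, so it is a norm attaining vector for $T^k$. For $j=0$ the same conclusion holds for $e_1$ directly.

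By the remark preceding the lemma, a norm attaining unit vector $x$ of $X$ satisfies $X^*Xx=\|X\|^2x$. Applying this with $X=T^k$, and using $\|T^k\|^2=s^2$, gives
\[
{T^k}^*T^k v_j = s^2 v_j \qquad \text{whenever } j+k\le n+1 .
\]
I will use this with $k=1$ and $k=2$ for every $j\in\{0,\ldots,n-1\}$. The constraint $j+2\le n+1$ holds exactly for these $j$, which explains why the hypothesis is needed up to $k=n+1$. The result is $T^*Tv_j = s^2v_j = {T^2}^*T^2v_j$ for all $j$.

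Since the $v_j$ span $\IC^n$, it follows that $T^*T={T^2}^*T^2$. The only step requiring any care is the basis claim, and that is a routine induction; the rest is bookkeeping of the index range $j+k\le n+1$.
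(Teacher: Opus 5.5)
There is a genuine gap in the normalization step. For $j\ge 1$ you have $\|v_j\|=s$ and $\|T^k v_j\|=\|T^{j+k}e_1\|=s$. The unit vector $v_j/s$ therefore satisfies $\|T^k(v_j/s)\| = s/s = 1$, not $\|T^k\|=s$. So $v_j/s$ is a norm attaining vector for $T^k$ only when $s=1$, and the identity ${T^k}^*T^k v_j = s^2 v_j$ for $j\ge 1$ does not follow.

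In fact it is false in general. If it held for all $j$, you would get $T^*T=s^2I_n$, so $T/s$ would be unitary and $s=\|T^2\|=s^2$, forcing $s=1$. But $s>1$ is precisely the case needed in Theorem~\ref{t44}. A concrete instance: $T=\begin{pmatrix} 1 & 0\\ b & 0\end{pmatrix}$ with $b\neq 0$ satisfies all the hypotheses, since $T^2=T$ and $\|T^ke_1\|=\|T^k\|=\sqrt{1+|b|^2}$. Yet $T^*T(Te_1)=(1+|b|^2)e_1\neq s^2\,Te_1$.

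The missing idea is that $e_1$ is the only genuine norm attaining vector you have. It gives ${T^m}^*T^m e_1=s^2e_1$ for $1\le m\le n+1$. Information about the other $v_j$ must come from subtracting consecutive relations. With $M=T^*T-I_n$ this gives
\[
{T^m}^*MT^me_1=0 \qquad (1\le m\le n).
\]
Now use ${T^2}^*T^2-T^*T=T^*MT$ together with the Hermitian symmetry of $M$. Then
\[
\langle T^*MTv_i,v_j\rangle=\langle MT^{i+1}e_1,T^{j+1}e_1\rangle .
\]
This vanishes for $j\ge i$ by the relation with $m=i+1$, paired against $T^{j-i}e_1$. It vanishes for $j<i$ by conjugate symmetry and the relation with $m=j+1$. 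Since the $v_j$ form a basis, $T^*MT=0$. This is exactly what the paper does, packaged as an induction on $i$ for $\mu=T^*MT(T^ie_1)$.

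Your basis argument via the unreduced Hessenberg structure is correct and coincides with the paper's.
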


\begin{proof}
Note that $Te_j\in\bigvee\{e_1, \dots, e_{j+1}\}$ for $j=1, \dots, n-1$, it follows that
$$T^2e_j\in T(\bigvee\{e_1, \dots, e_{j+1}\})= \bigvee\{Te_1, \dots, Te_{j+1}\}\subseteq \bigvee\{e_1, \dots, e_{j+2}\}.$$
Continue this process, we have $T^ke_j\in \bigvee\{e_1, \dots, e_{j+k}\}$. In particular,
\begin{equation}\label{eq41}
T^ke_1\in \bigvee\{e_1, \dots, e_{k+1}\} \ \  \mbox{ for } 1\le k\le n-1,
\end{equation}
and thus
\begin{equation}\label{eq42}
\langle T^je_1, e_{k+1}\rangle=0 \ \  \mbox{ for } 0\le j<k\le n-1.
\end{equation}
Moreover, for $k=1,2,\ldots, n-1$, since
$$\langle T^ke_1, e_{k+1}\rangle= \langle T(T^{k-1}e_1), e_{k+1}\rangle = \langle \sum_{j=1}^k\langle T^{k-1}e_1, e_j\rangle Te_j, e_{k+1}\rangle= \langle T^{k-1}e_1, e_k\rangle  \langle Te_k, e_{k+1}\rangle,$$
we infer that
\begin{equation}\label{eq45}
\langle T^k e_1,e_{k+1}\rangle=\langle Te_1,e_{2}\rangle \langle T e_2,e_{3}\rangle \cdots \langle T e_k,e_{k+1}\rangle=t_{21}\cdots t_{k+1, k}\neq 0 \ \ \mbox{ for } k=1,2,\ldots, n-1.
\end{equation}
Therefore, by \eqref{eq41}, \eqref{eq42} and \eqref{eq45}, we obtain successively $e_{k+1}\in \bigvee\{e_1,Te_1,\ldots,T^k e_1\}$ and
$$
\bigvee\{e_1,Te_1,\ldots,T^k e_1\}
=
\bigvee\{e_1,e_2,\ldots,e_{k+1}\}\ \ \mbox{ for } k=0,1,\ldots,n-1.
$$
In particular, this implies that $\{e_1,Te_1, \ldots, T^{n-1}e_1\}$ is a basis of $\mathbb{C}^n$. Therefore, it suffices to prove that
\begin{equation}\label{eq43}
T^*T(T^k e_1) = {T^2}^*T^{2}(T^k e_1)\ \ \mbox{ for all }  k=0,1,\ldots,n-1.
\end{equation}

We proceed by induction on $k$.

\medskip
\noindent
\textbf{(i) The case $k=0$.}
Since $\|Te_1\|=\|T\|=\|T^2\|=\|T^2 e_1\|$, hence
\[
T^*T e_1 = \|T\|^2 e_1 = \|T^2\|^2 e_1 = {T^2}^*T^2 e_1.
\]

\medskip
\noindent
\textbf{(ii) The case $1\le k\le n-1$.}
Assume that \eqref{eq43} holds for $0,\ldots,k-1$, that is,
\begin{equation}\label{eq44}
T^*(T^*T-I_n)T(T^je_1)=0 \ \ \mbox{ for all } j=0,1,\ldots,k-1.
\end{equation}
We want to show that $T^*T(T^k e_1) = {T^2}^*T^{2}(T^k e_1)$. Indeed, Let
$$\mu={T^2}^*T^2(T^k e_1)-T^*T(T^k e_1) = T^*(T^*T-I_n)T(T^ke_1).$$ Then, for $0\le j\le k-1$, \eqref{eq44} yields that
$$\langle \mu, T^je_1\rangle= \langle  T^*(T^*T-I_n)T(T^ke_1), T^je_1\rangle= \langle T^ke_1,  T^*(T^*T-I_n)T(T^je_1)\rangle= \langle T^ke_1, 0\rangle=0.$$
Moreover, by assumption, we have $\|T^{k+2}e_1\|=\|T^{k+2}\|=\|T\|=\|T^{k+1}\|=\|T^{k+1}e_1||$, we infer that
$${T^{k+2}}^*T^{k+2}e_1=\|T^{k+2}\|^2e_1=\|T\|^2e_1=\|T^{k+1}\|^2e_1={T^{k+1}}^*T^{k+1}e_1,$$
or, ${T^{k+1}}^*(T^*T-I_n)T^{k+1}e_1=0$. Therefore, for $k\le j\le n-1$, we obtain
$$\langle \mu, T^je_1\rangle= \langle  T^*(T^*T-I_n)T(T^ke_1), T^je_1\rangle= \langle {T^{k+1}}^*(T^*T-I_n)T^{k+1}e_1, T^{j-k}e_1)\rangle= \langle 0, T^{j-k}e_1 \rangle=0.$$
Since $\{e_1, Te_1, \dots, T^{n-1}e_1\}$ is a basis of $\IC^n$ and $\langle \mu, T^je_1\rangle=0$ for all $0\le j\le n-1$,
we deduce that $\mu=0$, and hence
\[
T^*T (T^k e_1) = {T^2}^*T^2 (T^k e_1).
\]
This completes the proof.
\end{proof}

We are now ready to prove Theorem \ref{t44}.

\begin{proof}[Proof of Theorem \ref{t44}]
(b) $\Rightarrow$ (a).
Assume that
$
A = \begin{pmatrix}
T & D\\
0 & C
\end{pmatrix},
$
where $T$ satisfies the conditions in (b). Theorem \ref{t410} (i) yields that
\[
\|T^kx\|=\|T^k\|=\|T\| \ \ \mbox{ for some unit vector } x\in\IC^r \mbox{ and for all } k\ge 1.
\]
Then
\begin{equation}\label{eq48}
A^k = \begin{pmatrix}
T^k & *\\
0 & C^k
\end{pmatrix}
\quad \text{for all } k\ge 1.
\end{equation}
Set $\widetilde{x} =\left({\textstyle x\atop  \textstyle 0}\right) \in \mathbb{C}^n$. Then $A^k \widetilde{x} = \Big({\textstyle T^kx\atop  \textstyle 0}\Big)$, and for $1 \le k \le m+1$,
\[
\|A^k \widetilde{x}\| = \|T^k x\| = \|T^k\|  = \|A^k\| = \|T\|= \|A\|.
\]
Hence (a) holds.

\medskip

(a) $\Rightarrow$ (b).
Let
\[
K=\bigvee \{x, Ax, \ldots, A^{m-1}x\}\ \ \mbox{ and }  \ \ r = \dim K.
\]
Since $m$ is the degree of the minimal polynomial of $A$, thus $K$ is an invariant subspace of $A$ and $r\le m$. We first show that $r \ge 2$.
If $r=1$, then $Ax = \lambda x$ for some $\lambda\in \mathbb{C}$.
Hence
\[
|\lambda| = \|Ax\| = \|A\| = \|A^2 x\| = |\lambda|^2,
\]
which implies $|\lambda| = 1$, contradicting $\|A\| > 1$.
Thus $r \ge 2$. Since $r=\dim K$,
the vectors $\{x, Ax, \ldots, A^{r-1}x\}$ are linearly independent.
Extend them to an orthonormal basis $\{u_1, u_2, \ldots, u_n\}$ of $\mathbb{C}^n$ such that
\[
\bigvee \{x, Ax, \ldots, A^{k-1}x\}
= \bigvee \{u_1, u_2, \ldots, u_k\}, \quad k=1,\ldots,r,
\]
where $u_1 = x$. Let $U=[u_1 \ \dots \ u_n]\in M_n$ be unitary. Since $K=\bigvee \{x, Ax, \ldots, A^{r-1}x\}=\bigvee \{u_1, u_2, \ldots, u_r\}$ is invariant for $A$, $U^*AU$ has the form
\[
\widetilde{A}\equiv U^*AU = \begin{pmatrix}
T & D\\
0 & C
\end{pmatrix},
\]
where $T=[t_{ij}]\in M_r$ is upper Hessenberg with $t_{i,i-1}\ne 0$ for $i=2,\ldots,r$.
Note that $Ue_1=u_1=x$. Let $e_1' = [1\;0\;\cdots\;0]^t \in \mathbb{C}^r$.
By assumption $\|A^kx\| =\|A^k\|$, $1\le k\le m+1$, we have
\[
\|T^k\| \le  \|\widetilde{A}^k\|=\|A^k\| =\|A^k x\|=\|U\widetilde{A}^kU^*(Ue_1)\|=
\|\widetilde{A}^k e_1\| = \|T^k e_1'\| \le \|T^k\|.
\]
Thus the inequalities above are actually equalities throughout. This implies that
\[
\|A^k\| = \|T^k e_1'\| = \|T^k\|
\]
for $k=1,2,\ldots,m+1$. In particular, $\|T\|=\|A\|=\|A^k\|=\|T^k\|$ for all $1\le k\le m+1$.
That is,
\[
\|T^k e_1'\| = \|T^k\| = \|T\| \quad \text{for } k=1,\ldots,r+1.
\]
By Lemma \ref{l46}, we obtain $T^*T = T^{*2}T^2$
as required.

\medskip

In particular, if $r=2$. Since $T^*T={T^2}^*T^2$, by Theorem \ref{t410} and $\|T\|=\|A\|>1$,
$T$ is unitarily similar to
\[
 \begin{pmatrix}
\alpha & b\\
0 & 0
\end{pmatrix},   \mbox{ where }
 |\alpha|=1 \mbox{ and } b\ne 0.
\]
Hence, without loss of generality, we may assume that $T$ has the above form.
Since $\|\widetilde{A}\| = \|T\| = \sqrt{1+|b|^2}$ and $\|\widetilde{A}^*\| \ge \|\widetilde{A}^* e_1\|$,
we deduce that
\[
D = \begin{pmatrix}
0 & \cdots & 0\\
d_1 & \cdots & d_{n-2}
\end{pmatrix},
\]
and thus the first row of $\widetilde{A}$ is $(\alpha \ \ b \ \ 0 \ \ \dots \ \ 0)$. It follows that the first row of $\widetilde{A}^2$ is
$$(\alpha^2 \ \ \alpha b \ \ bd_1 \ \ \dots \ \ bd_{n-2}).$$
Since $\|\widetilde{A}^2\| =\|\widetilde{A}\|=\sqrt{1+|b|^2}$, $|\alpha|=1$ and $b\neq 0$, it forces that $d_j=0$ for all $1\le j\le n-2$.
Thus $D=0$ as asserted.

Finally, if $A$ is nonsingular, then $T$ is also nonsingular. It follows that
$$I_r=(T^*)^{-1}T^*T(T^{-1})= (T^*)^{-1}{T^2}^*T^2(T^{-1})=T^*T,$$
or $T$ is unitary. Thus $\|A\|=\|T\|=1$, contradicting $\|A\|>1$. Hence $A$ is singular, this completes the proof.
\end{proof}

We have the following consequence immediately.

\begin{corollary}\label{cor412}
Let $A$ be an $n$-by-$n$ nonsingular matrix, and let $m$ be the degree of the minimal polynomial of $A$.
The following conditions are equivalent:
\begin{itemize}
\item[\rm (a)] There exists a unit vector $x \in \mathbb{C}^n$ such that $\|A^k x\| = \|A^k\| = \|A\|$ for all $1\le k \le m+1$.
\item[\rm (b)] $A$ is unitarily similar to $[a] \oplus C$ with $\|C\| \le |a| = 1$.

\item[\rm (c)] There exists a unit vector $x \in \mathbb{C}^n$ such that $\|A^k x\| = \|A^k\| = \|A\|$ for all $k \ge 1$.
\end{itemize}
\end{corollary}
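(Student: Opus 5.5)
The plan is to prove (b)$\Rightarrow$(c)$\Rightarrow$(a) directly and to obtain (a)$\Rightarrow$(b) from Theorem~\ref{t44}, combined with Proposition~\ref{prop41} for the norm-one case.

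\textbf{(b)$\Rightarrow$(c).} Suppose $A = [a]\oplus C$ with $\|C\|\le |a|=1$. Then $A^k = [a^k]\oplus C^k$, so $\|A^k\| = \max\{1,\|C^k\|\} = 1$. Taking $x=e_1$ gives $\|A^kx\| = |a|^k = 1 = \|A^k\| = \|A\|$ for all $k\ge 1$. Alternatively, this is exactly (d)$\Rightarrow$(c) of Proposition~\ref{prop41}.

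\textbf{(c)$\Rightarrow$(a).} This is trivial: restrict $k$ to $1\le k\le m+1$.

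\textbf{(a)$\Rightarrow$(b).} Assume (a) with $A$ nonsingular. First I would show that $\|A\|\ge 1$ and rule out $\|A\|>1$.
\begin{itemize}
\item If $A\neq 0$ and $m\ge 1$, then $\|A\| = \|A^2\| \le \|A\|^2$, which forces $\|A\|\ge 1$. Note that $A$ nonsingular gives $A\ne 0$.
\item The edge case $n=1$, $m=1$ is still covered, since (a) includes $k=2\le m+1$.
\item If $\|A\|>1$, Theorem~\ref{t44} applies, since its hypothesis (a) is exactly ours. Its final assertion says $A$ is singular, a contradiction.
\end{itemize}
Hence $\|A\|=1$.

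Now (a) gives, in particular, $\|A^n\| = 1$ whenever $n\le m+1$. This holds because $m\le n$, but only when $n\le m+1$, so it is not guaranteed in general. We therefore need $\|A^k\| = 1$ for all $k$, i.e. condition (a) of Proposition~\ref{prop41}.

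I would get this directly from the vector $x$. From $\|A x\| = \|A\| = 1$ we get $A^*Ax = x$, and similarly $(A^2)^*A^2x = x$. Together with $\|Ax\|=1$, these show $\|A(Ax)\| = 1$, i.e. $Ax$ is again a norm-attaining vector of $A$.

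The cleanest route, however, is to reuse the argument of Theorem~\ref{t44}. Set $K = \bigvee\{x,\dots,A^{m-1}x\}$. On $K$, $A$ compresses to an upper Hessenberg $T$ with $\|T^ke_1'\| = \|T^k\| = \|T\| = 1$ for $k\le r+1$. That derivation did not use $\|A\|>1$. Lemma~\ref{l46} then gives $T^*T = (T^2)^*T^2$. Since $A$ is nonsingular, $T$ (a restriction to an invariant subspace) is invertible, hence unitary. Consequently $\|T^k\| = 1$ for all $k$.

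To pass from $T$ to $A$, I use the block form $A = \begin{pmatrix} T & D\\ 0 & C\end{pmatrix}$ with $T$ unitary and $\|A\| = 1$. The rows of $(T\ D)$ have total norm at most $1$ while $T$ is unitary, which forces $D=0$. Thus $A = T\oplus C$ with $\|C\|\le 1$, so $\|A^k\| = 1$ for all $k$. Proposition~\ref{prop41} ((b)$\Rightarrow$(d)) now yields $A \simeq [a]\oplus B$ with $\|B\|\le |a| = 1$, which is (b).

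The main obstacle is this last step: making sure the Theorem~\ref{t44} argument transfers to the case $\|A\|=1$. In particular, I must check that Lemma~\ref{l46} only needs equal norms and not norm $>1$. The block argument giving $D=0$ is what upgrades the finitely many equalities in (a) to all powers.
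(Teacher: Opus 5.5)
Your proof is correct, but it finishes by a different route from the paper.

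Both arguments rule out $\|A\|>1$ the same way, via the singularity clause of Theorem~\ref{t44}. The paper then finishes in one line. From $\|A^k\|=\|A\|=1$ for $1\le k\le m+1$ it invokes the external result \cite[Theorem 2.1]{P}, which gives (b) from these norm equalities alone, so the vector $x$ plays no further role.

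You rightly noticed that Proposition~\ref{prop41} cannot be applied directly, because $m+1$ may be smaller than $n$. Instead you rerun the machinery of Theorem~\ref{t44} in the norm-one case:
\begin{itemize}
\item The Krylov compression $T$ on $K$ satisfies the hypotheses of Lemma~\ref{l46}, whose proof indeed uses only the norm equalities and not $\|T\|>1$. Hence $T^*T=(T^2)^*T^2$.
\item Invertibility of $A|_K$ then makes $T$ unitary.
\item The bound $\tilde A\tilde A^*\le I$ forces $I_r+DD^*\le I_r$, i.e.\ $D=0$.
\end{itemize}
This gives $A\simeq T\oplus C$ with $T$ unitary and $\|C\|\le 1$, from which (b) follows at once. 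You can even skip Proposition~\ref{prop41} by diagonalizing $T$.

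What your route buys is self-containment. It uses only results proved or stated in the paper, and it shows that nonsingularity plus the common norm-attaining vector already pins down the block structure. The paper's route is shorter but outsources the norm-one case.

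Two small points. When $r=1$, Lemma~\ref{l46} (stated for $n\ge 2$) is not needed, since then $T=[\lambda]$ with $|\lambda|=\|A\|=1$. Your aside about $Ax$ being a norm-attaining vector is never used and can be dropped.
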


\begin{proof}
The implications (b) $\Rightarrow$ (c) $\Rightarrow$ (a) are trivial. Assume (a) holds, $\|A\|=\|A^2\|\le \|A\|^2$ implies that $\|A\|\ge 1$. On the other hand, since $A$ is nonsingular,  Theorem \ref{t44} yields that $\|A\|\le 1$. Therefore, $1=\|A\|=\|A^k\|$ for all $1\le k\le m+1$, and hence (b) follows from \cite[Theorem 2.1]{P}.
This completes the proof.
\end{proof}

For Theorem \ref{t44}, the next proposition shows that the condition
\[
\|A^k x\| = \|A^k\| = \|A\| > 1 \quad \mbox{for } k=1,2,\ldots,m+1,
\]
cannot, in general, be reduced to $k \le m$.

\begin{proposition}\label{prop48}
For $n \ge 2$, let $A \in M_n$ be of the form
\[
A =
\begin{pmatrix}
0 &        &        &        & 1\\
2 & 0      &        &        &  \\
  & 1      & 0      &        &  \\
  &        & \ddots & \ddots &  \\
  &        &        & 1      & 0
\end{pmatrix}.
\]
Then the following statements hold:
\begin{itemize}
\item[\rm (a)] The minimal polynomial of $A$ has degree $n$.

\item[\rm (b)] $\|A^k e_1\| = \|A^k\| = 2$ for $k=1,2,\ldots,n$.

\item[\rm (c)] $A$ is not unitarily similar to a matrix of the form
\[
\begin{pmatrix}
T & D\\
0 & C
\end{pmatrix},
\]
where $T$ satisfies the condition in Theorem \ref{t44} (b).
\end{itemize}
\end{proposition}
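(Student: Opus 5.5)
The approach is to recognize $A$ as a weighted cyclic shift and read off (a) and (b) from how its powers act on the standard basis; (c) then follows from invertibility. Explicitly, $Ae_1=2e_2$, $Ae_j=e_{j+1}$ for $2\le j\le n-1$, and $Ae_n=e_1$.

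For (a), iterate this action: $A^ke_1=2e_{k+1}$ for $0\le k\le n-1$. Hence $e_1,Ae_1,\dots,A^{n-1}e_1$ are linearly independent. It follows that no nonzero polynomial of degree less than $n$ annihilates $A$, since it would already annihilate $e_1$. So the minimal polynomial has degree $n$. Going once around the cycle also gives $A^n=2I_n$, which is used below.

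For (b), I will use that every power $A^k$ is again a weighted shift. Each column of $A^k$ has exactly one nonzero entry, namely $A^ke_j=c_{j,k}e_{j+k \bmod n}$. The weight $c_{j,k}$ is the product of the $k$ consecutive weights met along the cycle starting from position $j$. Since $A^k$ maps the orthonormal basis to orthogonal vectors, $(A^k)^*A^k$ is diagonal and $\|A^k\|=\max_j|c_{j,k}|$. For $1\le k\le n$, a path of $k$ steps meets the weight $2$ at most once, so every $c_{j,k}\in\{1,2\}$. The path starting at $e_1$ meets it at the first step, so $c_{1,k}=2$. Therefore $\|A^ke_1\|=\|A^k\|=2$ for $1\le k\le n$. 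In particular $\|A\|=2$, which is also visible from $A^*A=\diag(4,1,\dots,1)$.

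For (c), argue by contradiction. Suppose $A$ were unitarily similar to $\begin{pmatrix} T & D\\ 0 & C\end{pmatrix}$ with $T\in M_r$ satisfying the conditions of Theorem \ref{t44}(b). Then I will use two facts.
\begin{itemize}
\item $A$ is nonsingular because $A^n=2I_n$. Since $\det A=\det T\cdot\det C$, the block $T$ is nonsingular too.
\item As in the last step of the proof of Theorem \ref{t44}, nonsingularity of $T$ together with $T^*T={T^2}^*T^2$ gives $T^*T=I_r$, so $T$ is unitary and $\|T\|=1$.
\end{itemize}
This contradicts the requirement $\|T\|=\|A\|=2$.

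As an independent check, Theorem \ref{t410} gives $\|T^{n+1}\|=\|T\|=2$. On the other hand $\|A^{n+1}\|=\|2A\|=4$, which contradicts $\|T^{n+1}\|=\|A^{n+1}\|$. This also shows why condition $k=m+1$ in Theorem \ref{t44} cannot be dropped.

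The only step needing care is the norm computation for $A^k$ in (b). It rests entirely on each power remaining a weighted shift, and on the observation that a path of length at most $n$ meets the weight $2$ at most once.
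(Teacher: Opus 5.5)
Your proof is correct. Part (b) is the paper's computation: the paper lists the diagonal matrices ${A^k}^*A^k$, and your weighted-shift argument produces them uniformly. Parts (a) and (c) go differently.

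For (a), the paper computes $\det(xI_n-A)=x^n-2$ and notes that its $n$ roots are distinct. You instead exhibit $e_1$ as a cyclic vector. There is a small, harmless slip: $A^0e_1=e_1$, so $A^ke_1=2e_{k+1}$ holds only for $1\le k\le n-1$.

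For (c), the paper reuses those eigenvalues. The eigenvalues of $T$ are eigenvalues of $A$, hence have modulus $2^{1/n}\neq 1$, which contradicts Theorem~\ref{t410}(iii). That argument needs only $T^*T={T^2}^*T^2$, not the norm conditions. Your main argument is different: nonsingularity (from $A^n=2I_n$) forces $T$ to be unitary, contradicting $\|T\|=\|A\|=2$. It uses the $k=1$ norm condition, which is legitimately part of the hypothesis. You could drop that condition by noting that $T^n$, the leading block of $U^*A^nU=2I_n$, equals $2I_r$ and so cannot be unitary.

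Your supplementary check, $\|A^{n+1}\|=\|2A\|=4\neq 2=\|T^{n+1}\|$, is a worthwhile addition that the paper leaves implicit. It shows directly that condition (a) of Theorem~\ref{t44} fails exactly at $k=m+1$, which is the point of the proposition.
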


\begin{proof}
(a) A direct computation shows that the characteristic polynomial of $A$ is
\[
\det(xI_n-A) = x^n - 2
= \prod_{k=0}^{n-1} \bigl(x - 2^{1/n} e^{i\frac{2k\pi}{n}}\bigr).
\]
Hence $A$ has $n$ distinct eigenvalues
\begin{equation}\label{eq49}
2^{1/n},\; 2^{1/n} e^{i\frac{2\pi}{n}},\; \ldots,\;
2^{1/n} e^{i\frac{2(n-1)\pi}{n}}.
\end{equation}
It follows that the minimal polynomial of $A$ is $x^n - 2$, and thus has degree $n$.

\medskip

(b) A direct computation yields
\[
A^*A = \operatorname{diag}(4,1,\ldots,1), \quad
{A^2}^*A^2 = \operatorname{diag}(4,1,\ldots,1,4),
\]
\[
\ldots, \quad
{A^{(n-1)}}^*A^{n-1} = \operatorname{diag}(4,1,4,\ldots,4), \quad
{A^n}^*A^n = 4I_n.
\]
Thus
 $\|A^k e_1\| = \|A^k\| = 2$ for $k=1,2,\ldots,n$.

\medskip

(c) By \eqref{eq49}, all eigenvalues of $A$ have modulus $2^{1/n}$.
Thus (c) follows from Theorem \ref{t410} (iii).
\end{proof}

Let $T\in M_r$ satisfy $T^*T={T^2}^*T^2$, and let
\[
A=\begin{pmatrix}
T & D\\
0 & C
\end{pmatrix}.
\]
By Theorem \ref{t410} (b), we have
\[
\|T\|=\|T^k\|\le \|A^k\| \ \ \mbox{ for all } k\ge 1.
\]
Note that in Theorem \ref{t44}, if $r=2$ and $\|A^k\|=\|T\|$ for all $k\ge 1$, then $D=0$. A natural question arises: can Theorem \ref{t44} be extended to the case $r\ge 3$? The answer is negative. A counterexample is given below.

\begin{proposition}\label{prop49}
Let
\[
A=\begin{pmatrix}
1&  0&  1&  0& \frac{1}{4}\\
0&  -1&  1&  0& -\frac{1}{4}\\
0&  0&  0&  0& \frac{1}{4}\\
0&  0&  0&  \frac{1}{4}& \frac{1}{4}\\
0&  0&  0&  0& 0
\end{pmatrix},\quad
T=\begin{pmatrix}
1&  0& 1\\
0&  -1& 1\\
0&  0& 0
\end{pmatrix},
\]
and let $x=\frac{1}{\sqrt{6}}[1 \ -1 \ 2 \ 0 \ 0]^t\in\IC^6$. Then the following hold:
\begin{itemize}
  \item[\rm (a)] $T^*T={T^2}^*T^2$.
  \item[\rm (b)] $\|A^k x\|=\|A^k\|=\|A\|=\|T\|=\sqrt{3}$ for all $k\ge 1$.
  \item[\rm (c)] $A$ is unitarily irreducible.
\end{itemize}
\end{proposition}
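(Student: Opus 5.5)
The plan is to treat the three items in order. Item (a) reduces to Theorem \ref{t410}, item (b) is a direct computation of powers, and item (c) is a cyclic-vector argument using a simple eigenvalue.

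\textbf{Item (a).} Observe that $T=\begin{pmatrix} U & b\\ 0 & 0\end{pmatrix}$ with $U=\diag(1,-1)$ unitary and $b=(1,1)^t$. By Theorem \ref{t410}, (a)$\Rightarrow$(b)$\Rightarrow$(c), we get $T^*T={T^2}^*T^2$ and $\|T^k\|=\|T\|$ for all $k$. Part (ii) of that theorem gives $\|T\|=\sqrt{1+\|b\|^2}=\sqrt3$.

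\textbf{Item (b), lower bound and the vector $x$.} Write $A=\begin{pmatrix} T & D\\ 0 & C\end{pmatrix}$ with
$$D=\begin{pmatrix} 0 & 1/4\\ 0 & -1/4\\ 0 & 1/4\end{pmatrix},\qquad C=\begin{pmatrix} 1/4 & 1/4\\ 0 & 0\end{pmatrix}.$$
(The vector $x$ lives in $\IC^5$.) Since $x=(x',0)$ with $x'=(1,-1,2)^t/\sqrt6$, we have $A^kx=(T^kx',0)$. A direct check gives $Tx'=(3,3,0)^t/\sqrt6$, so $\|Tx'\|=\sqrt3=\|T\|$. Using $T^k=V^{k-1}T$ with $V=U\oplus I_1$ unitary (as in the proof of Theorem \ref{t410}), we get $\|T^kx'\|=\sqrt3$ for every $k$. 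Hence $\|A^k\|\ge\|A^kx\|=\sqrt3$.

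\textbf{Item (b), upper bound.} It remains to show $\|A^k\|\le\sqrt3$, which is the main obstacle.
\begin{itemize}
\item Write $A^k=\begin{pmatrix} T^k & D_k\\ 0 & C^k\end{pmatrix}$, where $C^k=4^{1-k}C$ and $D_{k+1}=TD_k+DC^k$.
\item Since $T^k=V^{k-1}T$ and $V^2=I$, the blocks $T^k$ alternate with the parity of $k$.
\item I expect $D_k$ to be of the form $V^{k-1}$ times a fixed matrix plus a geometrically small term. I would compute $A^2$ and $A^3$ to identify this closed form for $D_k$.
\item Then verify $3I_5-{A^k}^*A^k\ge0$. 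Since $x$ is a null vector, I would check the complement of $x$, for instance via a Schur complement or by checking $\la (3I-{A^k}^*A^k)y,y\ra\ge0$ on $x^\perp$.
\item This reduces to a low-dimensional inequality involving $4^{-k}$, which holds uniformly in $k$ and is tightest at $k=1,2$.
\end{itemize}
If a closed form for $D_k$ is messy, I would instead bound $\|A^k\|$ through $\|A^kz\|^2$ using the decomposition $A^k=V^{k-1}A+(\text{a small correction in the last column})$.

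\textbf{Item (c).} The eigenvalues of $A$ are $1,-1,0,0,1/4$, so $1/4$ is a simple eigenvalue, with eigenvector $e_4$. Suppose $\IC^5=M\oplus M^\perp$ with both summands reducing for $A$. Then the eigenvector for a simple eigenvalue lies in one summand, say $e_4\in M$. Since $M$ is invariant under both $A$ and $A^*$, it contains all words in $A,A^*$ applied to $e_4$:
\begin{itemize}
\item $A^*e_4=\tfrac14(e_4+e_5)$, so $e_5\in M$;
\item $Ae_5=\tfrac14(e_1-e_2+e_3+e_4)$, so $e_1-e_2+e_3\in M$;
\item applying $A^*$ and $A$ to these vectors gives further combinations of $e_1,e_2,e_3$.
\end{itemize}
I would continue until these vectors span $\IC^5$, which should take only a few more steps. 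This gives $M=\IC^5$, so $A$ is unitarily irreducible.
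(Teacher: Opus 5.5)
Your use of Theorem~\ref{t410} for (a), the lower bound $\|A^k\|\ge\|A^kx\|=\sqrt3$, and your argument for (c) are all sound. For (c) you take a different route from the paper, which solves $AP=PA$ entrywise for a projection $P$. Your simple-eigenvalue argument does close in two more steps. From $w=e_1-e_2+e_3\in M$ you get $Aw=2(e_1+e_2)\in M$, and then $A(e_1+e_2)=e_1-e_2\in M$. Hence $e_1,e_2,e_3\in M$ and $M=\IC^5$. That route is shorter and shows where irreducibility comes from; the paper's is a direct entrywise computation.

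The gap is the upper bound $\|A^k\|\le\sqrt3$, which is the real content of (b). You only describe what you would verify. The claim that the resulting inequality ``holds uniformly in $k$ and is tightest at $k=1,2$'' is unsupported. There is also a logical slip. From $\|A^kx\|^2=3$ you only know $\la(3I-{A^k}^*A^k)x,x\ra=0$. Concluding that $x$ is a null vector uses the positivity you are trying to prove. Without the eigen-equation ${A^k}^*A^kx=3x$ there are cross terms between $x$ and $x^\perp$, so checking $x^\perp$ alone proves nothing.

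The missing step is to compute ${A^k}^*A^k$ and verify the eigen-equation directly; after that the trace finishes the job, with no Schur complement. Since the first column of $D$ and the second row of $C^k$ vanish, $DC^k=0$. So $D_k=T^{k-1}D$, with no small term. For every $k\ge2$, with $\varepsilon=4^{-2k}$,
\[
{A^k}^*A^k=\begin{pmatrix}1&0&1&0&\frac12\\0&1&-1&0&\frac12\\1&-1&2&0&0\\0&0&0&\varepsilon&\varepsilon\\ \frac12&\frac12&0&\varepsilon&\frac12+\varepsilon\end{pmatrix}.
\]
This visibly satisfies ${A^k}^*A^kx=3x$, and so does $A^*A$. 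Its eigenvalues are nonnegative, and their sum is $\|A^k\|_F^2=\frac92+2\varepsilon<6$ (it is $4+\frac5{16}$ for $k=1$). So no eigenvalue other than the one belonging to $x$ can reach $3$, and $\|A^k\|^2=3$. This is precisely the paper's argument.
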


\begin{proof}
(a) A direct computation shows that
\[
T^*T=
\begin{pmatrix}
1&  0& 1\\
0&  1& -1\\
1&  -1& 2
\end{pmatrix}
= {T^2}^*T^2.
\]

(b) For $k\ge 1$, one computes
\[
A^{2k}=
\begin{pmatrix}
1&  0&  1&  0& \frac{1}{2}\\
0&  1&  -1&  0& \frac{1}{2}\\
0&  0&  0&  0& 0\\
0&  0&  0&  \frac{1}{4^{2k}}& \frac{1}{4^{2k}}\\
0&  0&  0&  0& 0
\end{pmatrix}
\quad \mbox{and} \quad
A^{2k+1}=
\begin{pmatrix}
1&  0&  1&  0& \frac{1}{2}\\
0&  -1&  1&  0& -\frac{1}{2}\\
0&  0&  0&  0& 0\\
0&  0&  0&  \frac{1}{4^{2k+1}}& \frac{1}{4^{2k+1}}\\
0&  0&  0&  0& 0
\end{pmatrix}.
\]
Moreover,
\[
A^*A=
\begin{pmatrix}
1&  0&  1&  0& \frac{1}{4}\\
0&  1&  -1&  0& \frac{1}{4}\\
1&  -1&  2&  0& 0\\
0&  0&  0&  \frac{1}{16}& \frac{1}{16}\\
\frac{1}{4}&  \frac{1}{4}&  0&  \frac{1}{16}& \frac{1}{4}
\end{pmatrix}.
\]
For $k\ge 1$, one verifies that
\[
{A^k}^*A^k x = 3x
\quad \mbox{and} \quad
T^*T x' = 3x',
\]
where $x'=\frac{1}{\sqrt{6}}[1 \ -1 \ 2]^t\in\IC^3$. Hence $3$ is an eigenvalue of both ${A^k}^*A^k$ and $T^*T$.

Furthermore,
\[
\operatorname{tr}(T^*T)=4<6,
\]
and
\[
\operatorname{tr}({A^k}^*A^k)\le \operatorname{tr}({A^2}^*A^2)
=4+\frac{1}{2}+\frac{2}{4^4}<6.
\]
It follows that $3$ is the largest eigenvalue of both ${A^k}^*A^k$ and $T^*T$. Therefore, we obtain
\[
\|T\|^2=\|T^*T\|=3
\]
and
\[
\|A^k x\|^2
=\langle {A^k}^*A^k x,x\rangle
=3\|x\|^2
=3
=\|{A^k}^*A^k\|
=\|A^k\|^2
\]
for all $k\ge 1$.

(c) It suffices to show that the only projections commuting with $A$ are $0_5$ and $I_5$. Let $P$ be a $5\times 5$ Hermitian projection such that $PA=AP$. Write
\[
P=(p_{ij})_{i,j=1}^5.
\]
Comparing the $(i,j)$-entries of $AP$ and $PA$, we obtain successively
\[
p_{15}=p_{25}=p_{35}=p_{45}=0,
\]
\[
p_{14}=p_{24}=p_{34}=0,\quad p_{55}=p_{44},
\]
\[
p_{13}=p_{23}=0,\quad p_{33}=p_{55},
\]
\[
p_{12}=0,\quad p_{22}=p_{55},
\]
\[
p_{11}=p_{55}.
\]
Hence $P=p_{55}I_5$. Since $P=P^2=P^*$, it follows that $P=0_5$ or $P=I_5$. Therefore, $A$ is unitarily irreducible.
\end{proof}

Finally, we construct a $3$-by-$3$ unitarily irreducible matrix $T$ satisfying
$T^*T = {T^2}^*T^2$, which provides a counterexample to Conjecture \ref{c11}.

\begin{proposition}\label{prop413}
Let
\[
T =
\begin{pmatrix}
e^{i\sqrt{2}\pi} & 0 & \frac{1}{\sqrt{2}}\\
0 & e^{-i\sqrt{2}\pi} & \frac{1}{\sqrt{2}}\\
0 & 0 & 0
\end{pmatrix}.
\]
Then the following statements hold:
\begin{itemize}
\item[\rm (a)] There exists a unit vector $x$ such that
\[
\|T^k x\| = \|T^k\| = \sqrt{2} \quad \text{for all } k \ge 1.
\]

\item[\rm (b)] $T^k$ is unitarily irreducible for all $k \ge 1$.

\item[\rm (c)] For all $k \ge 1$, $T^k$ is not a modulus-one multiple of an idempotent.
\end{itemize}
\end{proposition}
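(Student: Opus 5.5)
Write $\omega=e^{i\sqrt2\pi}$, so $T=\begin{pmatrix} U & b\\ 0 & 0\end{pmatrix}$ with $U=\diag(\omega,\bar\omega)$ unitary and $b=\frac1{\sqrt2}(1,1)^t$. Then $T$ has the form in Theorem \ref{t410}(a), and the plan follows from there.

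For (a) we invoke Theorem \ref{t410}. Part (i) gives a unit vector $x$ with $\|T^kx\|=\|T^k\|=\|T\|$ for all $k\ge1$. Part (ii) gives $\|T\|=\sqrt{1+\|b\|^2}=\sqrt2$. Explicitly, $x$ can be taken as the top eigenvector of $T^*T$: since $TT^*=\begin{pmatrix} I_2+bb^* & 0\\ 0&0\end{pmatrix}$, we may take $x=T^*(b/\|b\|)/\sqrt2$, although this is not needed.

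For (b), compute $T^k=\begin{pmatrix} U^k & U^{k-1}b\\ 0&0\end{pmatrix}$, whose third column is $\frac1{\sqrt2}(\omega^{k-1},\bar\omega^{k-1},0)^t$. The key point is that $\sqrt2$ is irrational, so $\omega^k\neq\bar\omega^k$ for every $k\ge1$ (otherwise $e^{2ik\sqrt2\pi}=1$, i.e.\ $k\sqrt2\in\IZ$). Hence $T^k$ has three distinct eigenvalues $\omega^k,\bar\omega^k,0$.
\begin{itemize}
\item[$\bullet$] Suppose $P$ is a Hermitian projection commuting with $T^k$. Then $P$ commutes with every polynomial in $T^k$, in particular with the three spectral (Riesz) idempotents, each of rank one. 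So $P$ is a polynomial in $T^k$ and is diagonalizable in the eigenbasis $\{e_1,e_2,v\}$, where $v$ is the eigenvector for $0$. Thus $P$ is a sum of some of these rank-one idempotents.
\item[$\bullet$] Being Hermitian forces the range of $P$ to be orthogonal to its kernel. So it suffices to check that no nonempty proper subset of $\{e_1,e_2,v\}$ spans a subspace orthogonal to the span of the rest.
\item[$\bullet$] Solving $T^kv=0$ gives $v=(\omega^{-1}/\sqrt2,\ \bar\omega^{-1}/\sqrt2,\ -1)^t$ up to scaling, which has all coordinates nonzero. Hence $v\not\perp e_1$ and $v\not\perp e_2$, which rules out every proper splitting.
\end{itemize}
Alternatively, one can mimic the entrywise comparison used in Proposition \ref{prop49}(c). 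Either route gives unitary irreducibility.

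For (c), suppose $T^k=\lambda E$ with $|\lambda|=1$ and $E^2=E$. Then $\sigma(T^k)\subseteq\{0,\lambda\}$. This contradicts the fact from (b) that $T^k$ has the two distinct nonzero eigenvalues $\omega^k\ne\bar\omega^k$. Combined with (b), no power of $T$ has a direct summand that is a unimodular multiple of an idempotent, which disproves Conjecture \ref{c11}. The main obstacle is (b), specifically the careful use of the irrationality of $\sqrt2$ to guarantee distinct eigenvalues for every $k$, together with the orthogonality check on the eigenvectors.
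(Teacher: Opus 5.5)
Your proposal is correct. Parts (a) and (b) follow the paper's route. For (a), you read the result off Theorem \ref{t410}(i),(ii) with $\|b\|=1$. For (b), you use the eigenvectors $e_1,e_2,v$ of $T^k$ together with $\langle v,e_1\rangle\neq0\neq\langle v,e_2\rangle$. Your version of (b) is more careful than the paper's on two points the paper leaves implicit:
\begin{itemize}
\item[(1)] that $\omega^k\neq\bar\omega^k$, which you get from $k\sqrt2\notin\IZ$;
\item[(2)] why a reducing subspace must be spanned by a subset of $\{e_1,e_2,v\}$, which you get by showing that a projection commuting with $T^k$ is a sum of rank-one spectral idempotents, using the simple spectrum.
\end{itemize}

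Part (c) is where you genuinely differ. The paper deduces (c) from (b) together with \cite[Lemma 3.1]{GWW}, which says every idempotent of size at least $3$ is unitarily reducible. You instead use $\sigma(\lambda E)\subseteq\{0,\lambda\}$ for any idempotent $E$, which cannot hold given the two distinct nonzero eigenvalues $\omega^k\neq\bar\omega^k$. Your route is self-contained: it avoids the external lemma and does not use (b). It also rules out every scalar multiple of an idempotent, not only unimodular ones. The paper's route instead lets the single property of irreducibility carry both (b) and (c).

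As you note, (b) is still indispensable for refuting Conjecture \ref{c11}. Without irreducibility, a one-dimensional direct summand such as $[\omega^k]$ would already be a unimodular multiple of an idempotent.
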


\begin{proof}
(a) In Theorem \ref{t410}, take
\[
U = \begin{pmatrix}
e^{i\sqrt{2}\pi} & 0 \\
0 & e^{-i\sqrt{2}\pi}
\end{pmatrix}
\quad \mbox{and} \quad
B = \begin{pmatrix} \frac{1}{\sqrt{2}} \\[4pt] \frac{1}{\sqrt{2}} \end{pmatrix}.
\]
Since $T= \begin{pmatrix}
U & B \\
0 & 0
\end{pmatrix}$ and $U$ is unitary, the desired conclusion follows from Theorem \ref{t410}.

\medskip

(b) For each $k \ge 1$, the eigenvalues of $T^k$ are
$
e^{ik\sqrt{2}\pi},  e^{-ik\sqrt{2}\pi}$, and $0$.
A direct computation shows that the corresponding eigenvectors can be chosen as
$
e_1,e_2$, and
\[
v = \begin{pmatrix} 1 \\ e^{i2\sqrt{2}\pi} \\ -\sqrt{2}e^{i\sqrt{2}\pi} \end{pmatrix}.
\]
In particular, we have
\[
\langle v, e_1 \rangle \neq 0 \quad \text{and} \quad \langle v, e_2 \rangle \neq 0.
\]
Hence, there is no nontrivial reducing subspace spanned by a subset of eigenvectors, and therefore $T^k$ is unitarily irreducible for all $k \ge 1$.

\medskip

(c) This follows immediately from (b) and \cite[Lemma 3.1]{GWW}.
\end{proof}

Here, \cite[Lemma 3.1]{GWW} says that if $X\in M_n$ is an idempotent and $n\ge 3$, then $X$ is unitarily reducible.

\medskip\noindent
{\bf \large Acknowledgment}

The researches of  Gau and  Wang were partially supported by National Science and Technology Council of Taiwan under grant NSTC 114-2115-M-008-005 and NSTC 114-2115-M-A49-005, respectively.
Li is an affiliate member of the Institute for Quantum Computing, University
of Waterloo. His research was supported by the Simons Foundation Grant 851334.

\medskip\noindent
(Gau) Department of Mathematics, National Central University, Chungli 32001, Taiwan. hlgau@math.ncu.edu.tw

\medskip\noindent
(Hong) Department of Mathematics, National Tsing Hua University, Hsinchu 300044, Taiwan. a3229868@gmail.com

\medskip\noindent
(Li) Department of Mathematics, College of William \& Mary, Williamsburg, VA 23187, USA. ckli@math.wm.edu

\medskip\noindent
(Wang) Department of Applied Mathematics, National Yang Ming Chiao Tung University, Hsinchu 300093, Taiwan.
kzwang@math.nctu.edu.tw

\end{document}